\newcommand{\curl}{{curl}}
\newcommand{\dive}{{div}}
\newcommand{\R}{ {\mathbb{R}} }
\newcommand{\N}{ {\mathbb{N}} }
\newcommand{\SSS}{ {\mathbb{S}} }
\newcommand{\id}{ {\operatorname{id}} }
\newcommand{\bosy}[1]{{\boldsymbol{#1}}}
\newcommand{\ee}{{\bosy{e}}}
\newcommand{\ff}{{\bosy{f}}}
\newcommand{\gggg}{{\bosy{g}}}
\newcommand{\nn}{{\bosy{n}}}
\newcommand{\sss}{{\bosy{s}}}
\newcommand{\uu}{{\bosy{u}}}
\newcommand{\vv}{{\bosy{v}}}
\newcommand{\ww}{{\bosy{w}}}
\newcommand{\xx}{{\bosy{x}}}
\newcommand{\yy}{{\bosy{y}}}
\newcommand{\zz}{{\bosy{z}}}
\newcommand{\HH}{{\bosy{H}}}
\newcommand{\LL}{{\bosy{L}}}
\newcommand{\aalpha}{{\boldsymbol{\alpha}}}
\newcommand{\bbeta}{{\boldsymbol{\beta}}}
\newcommand{\ggamma}{{\boldsymbol{\gamma}}}
\newcommand{\ttheta}{{\boldsymbol{\theta}}}
\newcommand{\nnu}{{\boldsymbol{\nu}}}
\newcommand{\rrho}{{\boldsymbol{\rho}}}
\newcommand{\DDelta}{{\boldsymbol{\Delta}}}
\newcommand{\cX}{\mathcal{X}}
\newcommand{\cC}{\mathcal{C}}
\newcommand{\cR}{\mathcal{R}}
\newcommand{\cP}{\mathcal{P}}
\newcommand{\cB}{\mathcal{B}}
\newcommand{\cI}{\mathcal{I}}
\newcommand{\cF}{\mathcal{F}}
\newcommand{\cN}{\mathcal{N}}
\newcommand{\bcF}{\bosy{\cF}}
\newcommand{\veps}{\varepsilon }
\DeclareMathOperator*{\esssup}{ess\,sup}
\newtheorem{theorem}{Theorem}[section]
\newtheorem{lemma}[theorem]{Lemma}
\begin{document}

\title[collocation method]{An analytically divergence-free collocation method for the incompressible Navier-Stokes equations on the rotating sphere}

\author{Tino Franz}
\address{Applied and Numerical Analysis, University of Bayreuth, D-95440 Bayreuth, Germany}
\email{Tino.Franz@uni-bayreuth.de, Orcid-ID:0000-0002-4273-7639}
\date{\today}

\thanks{The author would like to thank Holger Wendland for his encouragement and several valuable discussions which lead to this paper.
} 
\subjclass[MSC Classification]{65M12, 65M70, 76D05}
\keywords{collocation, radial basis function, meshfree methods, Navier-Stokes equation, vector spherical harmonics}

\begin{abstract}
In this work, we develop a high-order collocation method using radial basis function (RBF) for the incompressible Navier-Stokes equation (NSE) on the rotating sphere. 
The method is based on solving the projection of the NSE on the space of divergence-free functions. 
For that, we use matrix valued kernel functions which allow an analytically divergence-free approximation of the velocity field.  
Using kernel functions which lead to rotation-free approximations, the pressure can be recovered by a simple kernel exchange in one of the occurring approximations, without solving an additional Poisson problem.
We establish precise error estimates for the velocity and the pressure functions for the semi-discretised solution. 
In the end, we give a short estimate of the numerical cost and apply the new method to an experimental test case.
\end{abstract}

\maketitle
\section{Introduction}
\label{intro}
In this paper we will develop and analyse a discretisation method for the incompressible Navier-Stokes equations (NSE) on the rotating unit sphere using a kernel-based divergence-free approximation method. 
This method is highly motivated by and can be seen as an extension of the work of Keim \cite{Keim2016_Diss}, who constructed a collocation scheme for the time-depended Navier-Stokes equations on the $d$-dimension torus. 
See also \cite{Keim2016} for the method applied to the time-dependent Stokes equation.

Kernel-based divergence-free approximation methods have been studied and analysed in \cite{Dodu2002,Benbourhim2010,Fuselier2008,Fuselier2008_2,Fuselier2016,Lowitzsch2005,Lowitzsch2005_2,Farrell2017}, and were successfully applied to different problems, see for example, \cite{Schrader2011,Schrader2012,Wendland2009}.
In \cite{Ward2007} and \cite{fuselier2009}, the authors extended and analysed the theory of divergence-free and rotation-free kernel-based approximations for tangential vector fields on manifolds, especially the unit sphere. 
These approximation methods will be used here.

The two dimensional sphere in $\R^3$ is given by $\SSS^{2}=\{\xx\in\R^3\,\vert\,\|\xx\|_2=1\}$, where $\|\cdot\|_2$ denotes the standard euclidean norm. 
A vector field $\uu:\SSS^{2}\to\R^3$ is called to be tangential, if $\uu(\xx)\cdot\xx=0$ for all $\xx\in\SSS^2$.
The Navier-Stokes equations on the sphere are given as follows, see \cite{Il1988,Titi1999}.
Given a tangential and divergence-free vector field $\uu_0:\SSS^2\to\R^3$ and a tangential and divergence-free forcing term $\ff:[0,T)\times\SSS^2\to\R^3$, we seek the solution  $\uu:[0,T)\times  \SSS^2\to\R^3$, which represents the velocity, and $p:[0,T)\times \SSS^2\to\R$, which represents the pressure, of
\begin{align}\label{Introduction::NS::1}
\partial_t\uu-\nu\DDelta^*\uu+\cB(\uu,\uu)+\cC(\uu)+
\nabla^* p&=\ff&&\text{on } \SSS^{2}\times(0,T),\\\label{Introduction::NS::2}
\nabla^*\cdot\uu&=0&&\text{on } \SSS^{2}\times(0,T),\\\label{Introduction::NS::3}
\uu(\cdot,0)&=\uu_0&&\text{on } \SSS^{2},
\end{align}
where 
\begin{equation*}
\cB(\uu,\vv):=(\uu\cdot\nabla_*)\vv,\quad \cC(\uu):=l\nn\times\uu,
\end{equation*}
$\nu>0$ is the diffusion coefficient, $\nn$ is the normal vector and $l(\xx)=2\Omega x_3=2\Omega \sin(\phi)$ is the Coriolis coefficient with angular velocity $\Omega$ and latitude $\phi=\arcsin(x_3)$. 
$\cB$ represents the non-linear convection operator, while $\cC$ is the Coriolis operator caused by the rotation of the sphere. 
A proper definition of the differential operators on the sphere will be given in Section \ref{notation::basic}.

To simplify this system, a common approach is to project equation \eqref{Introduction::NS::1} on its divergence-free part using the Leray operator $\cP$ to eliminate the gradient of the pressure, which leads to 
\begin{equation}\label{Introduction::NS_projected::1}
\partial_t\uu-\nu\DDelta^*\uu=\cP(\ff-\cB(\uu,\uu)-\cC(\uu)),\quad\text{in }(0,T)\times \SSS^{2},
\end{equation} 
where $\uu$ still satisfies \eqref{Introduction::NS::2}.

Our method is using a kernel-based interpolation method, which automatically gives an approximation of the divergence-free as well as the rotation-free part of a function.
More precisely, given a set of points $\cX=\{\xx_1,\ldots\,\xx_N\}\subset \SSS^{2}$ and data sites $\ff_j=\ff(\xx_j)\in \R^3$, $1\leq j\leq N$, with $\ff$ being a tangential vector field, the interpolation of $\ff$ is given by
\begin{equation}\label{Introduction::NS_approx::1}
\ff_h(\xx)=\sum_{j=1}^N\Phi(\xx,\xx_j)\bbeta_j,\quad \xx\in \SSS^3,
\end{equation}
where $\bbeta_j\in\R^3$ with $\bbeta_j\cdot\xx_j=0$ for every $1\leq j\leq N$ and the kernel $\Phi:\SSS^{2}\times\SSS^{2}\to \R^{3\times3}$ is given such that the interpolation $\ff_h$ is tangential.
In our context, the kernel will be given such that $\Phi=\Phi_\dive+\Phi_\curl$, i.e. the kernel function is the sum of a kernel function which yields divergence-free and one that yields curl-free interpolations.
Hence, by simply exchanging the kernel in \eqref{Introduction::NS_approx::1} by $\Phi_\dive$, we obtain an analytically divergence-free approximation to the divergence-free part of $\ff$, which leads to an approximation $\cP_h$ of the Leray operator. 
See \cite{Ward2007} and \cite{fuselier2009} for more details.

To derive an approximation of the divergence-free velocity, we use the method of lines to separate the variables, such that our approximation has the form
\begin{equation}\label{Introduction::velocity_approx}
\uu_h(\xx,t)=\sum_{j=1}^N\Phi_\dive(\xx,\xx_j)\aalpha_j(t),\quad \xx\in \SSS^3, t\in(0,T).
\end{equation}
This ensures that \eqref{Introduction::NS::2} is automatically satisfied.
Equation \eqref{Introduction::NS_approx::1} is then to be enforced pointwise on $\cX$, which leads to the semi-discrete system of ODEs
\begin{equation}\label{Introduction::NS_approx}
\partial_t\uu_h(\xx_j,t)-\nu\DDelta^*\uu_h(\xx_j,t) =\cP_h(\ff-\cB(\uu_h,\uu_h)-\cC(\uu_h))(\xx_j,t)\
\end{equation}
for $\xx_j\in \cX$ and $t\in(0,T).$
The main part of the work consists of analysing the convergence of the solution of \eqref{Introduction::NS_approx} to the solution of \eqref{Introduction::NS_projected::1}. 
Here, the right hand side is given by using the approximated Leray operator $\cP_h$, which gives a divergence-free approximation of the form
\begin{equation}\label{Introduction::rhs_approx}
\cP_h(\ff-\cB(\uu_h,\uu_h)-\cC(\uu_h))(\xx_j,t)=\sum_{k=1}^N\Phi_\dive(\xx_k,\xx_j)\bbeta_j(t)
\end{equation}
for $\xx_j\in \cX$ and $t\in(0,T)$, where the coefficients $\bbeta_j\in\R^3$ are obtained by interpolating $\ff-\cB(\uu_h,\uu_h)-\cC(\uu_h)$ on the supporting points $\cX$. 

Using standard time discretisation methods such as Runge-Kutta methods, the system given in \eqref{Introduction::NS_approx} can be turned into a fully discretised system. 
Hence, the linear system to determine $\bbeta_j$ for \eqref{Introduction::rhs_approx} as well as the linear system to determine the $\aalpha_j$ in \eqref{Introduction::velocity_approx} have to be solved in every time-step.
However, choosing kernel functions $\Phi$ with compact support, the matrices in both linear systems can be made sparse, such that both linear systems can be solved sufficiently fast.
As we will additionally point out, the efficiency can be theoretically further increased by using Lagrange basis functions.

This method has several advantages. 
It provides analytically divergent-free approximations to the velocity, which do not need any spatial numerical integration for their calculation.
Moreover, it can be set up to construct arbitrarily smooth solution just by the choice of the kernel function, which leads to high-order spatial approximations.

Finally, it leads easily to a high-order approximation of the pressure just by exchanging the kernel function in \eqref{Introduction::rhs_approx}, i.e.
\begin{equation*}\label{Introduction::pressure_approx}
p_h(\xx,t)=\sum_{j=1}^N\Psi(\xx,\xx_j)^T\bbeta_j(t).
\end{equation*}
for a specific kernel function $\Psi:\SSS^2\times\SSS^2\to\R^3$. 
Hence, we do not need to solve an additional Poisson equation to obtain an approximation of the pressure.
However, we have to assume a strong solution to the Navier-Stokes equations to ensure the existence of the point evaluations for the collocation method.

Even if there is a lot of work for numerical methods for the Navier-Stokes equations on the Euclidean space or bounded Euclidean domains, see for example \cite{Debussche1993,temam2001}, the are only few results about numerical methods and their analysis for the Navier-Stokes equations on bounded manifolds and the sphere.
First of all, there is the paper of Fengler and Freeden \cite{fengler2004}, who developed a spectral non-linear Galerkin method which is based on vector and tensor spherical harmonics. 
However, they mainly concentrate on the implementation of their method, and did not provide a proper mathematical analysis.
Next, there is the paper of Ganesh, Le Gia and Sloan \cite{Ganesh2010}, who developed a pseudospectral quadrature method for the Navier-Stokes equations. 
Based on the Gevrey-regularity, they were able to show spectral convergence of their method. However, their method needs numerical integration of the occurring integrals.
Moreover, both method do not provide a fast and easy possibility to calculate an approximation of the pressure.

This paper is organized as follows. 
In the second section we will give a short introduction to spherical harmonics and Sobolev spaces on the sphere. 
The approximation tools we will need for constructing the our approximation method will be given in the third section.
In the fourth section we will shortly introduce the problem we want to solve, together with a few theoretical results concerning the non-linear part of the Navier-Stokes equations. 
After that, in the fifth section, we will construct our approximation scheme and give the proof of convergence as the main result of this article.
Finally, we demonstrate the new method numerically.

\section{Notation and preliminaries}\label{notation}
In this section, we want to provide the basic theory to spherical harmonics and function spaces on the sphere. 
We will start with basic notations.
\subsection{Basic notations and differential operators on the sphere}\label{notation::basic}
We will differ between scalar-valued functions $f:\SSS^{2}\to \R$ and vector fields $\uu:\SSS^{2}\to \R^3$, which will be written in bold letters.
Hence, we will also denote spaces, which contain vector-valued functions, with bold letters.
For a $\xx\in\SSS^{2}$, the set $T_\xx\SSS^{2}$ contains all $\aalpha\in\R^3$ which are tangent to $\SSS^{2}$ at $\xx$, i.e. $\aalpha\cdot\xx=0$.

We define the surface gradient $\nabla^*$, the surface curl-operator $\boldsymbol{L}^*$ and the surface Laplace-Beltrami operator $\Delta^*$ for a sufficiently smooth scalar-valued function $f:\SSS^{2}\to \R$ by
\begin{align*}
\nabla^* f&:=\nabla f-\left<\nabla f,\nnu\right>\nnu,\\
\LL^*f(\xx)&:=\xx\times\nabla^*f(\xx),\\
\Delta^* f(\xx)&:=\Delta f(\xx)-\left<\xx,H(f)\xx\right>-2\left<\nabla f(\xx),\xx\right>,
\end{align*}
where $\nabla$ and $\Delta$ are the standard gradient and Laplace operators, respectively, $\times$ denotes the vector product and $H$ denotes the Hessian Matrix $H(f)=(\partial_{ij}f)$.
For $1\leq j\leq 3$, the $j$-th component of $\nabla^*$ will be denoted by $\partial^*_j$. 
The Laplace-Beltrami operator can also be written as
\begin{equation*}
\Delta^*=\nabla^*\cdot\nabla^*=\LL^*\cdot\LL^*.
\end{equation*}
For vector fields $\uu:\SSS^{2}\to \R^3$, we define the surface-divergence $\nabla^*\cdot$, the vectorial curl-operator $\LL^*\cdot$ and the vectorial Laplace-Beltrami operator $\DDelta^*$ as
\begin{align*}
\nabla^*\cdot\uu&:=\sum_{i=1}^3 (\nabla^*u_i)\cdot\ee_i,\\
\LL^*\cdot \uu(\xx)&:=\sum_{i=1}^3 (\LL^*u_i)\cdot\ee_i,\\
\DDelta^* &:=p_{nor}(\Delta^*+2)p_{nor}+p_{tan}\Delta^*p_{tan},
\end{align*}
where $\ee_i$ denotes the $i$-th unit vector, $p_{nor}(\uu)(\xx)=\langle \uu(\xx),\xx\rangle\xx$ and $p_{tan}(\uu)=\uu-p_{nor}\uu$ are the normal and tangential part of the function $\uu$ with respect to the sphere, respectively. 
For further details on differential operators on the sphere, see for example \cite{Freeden1998}.
Note that we will only be interested in tangential vector fields $\uu$, where the normal part vanishes. 
In this case, the vectorial Laplace-Beltrami operator degenerates to the scalar one applied to the single components $u_i$, $1\leq i\leq3$, of $\uu$. 
Finally, we will call a vector field $\uu$ to be divergence-free if $\nabla^*\cdot\uu$ vanishes and curl-free if the rotation $\LL^*\cdot\uu$ vanishes.

We will use time-depended function spaces, which are defined as follows.
For an arbitrary Banach space $X$, the space $C([0,T];\!\! X)$ consists of all continuous functions from $[0,T]$ to $X$.
For $1\leq p\leq \infty$, we denote the space of all strongly measurable functions from $[0,T]$ to $X$ such that $t\mapsto\|f(t)\|_X\in L^p(0,T)$ by $L^p(0,T;\!\!X)$, with norm 
\begin{equation*}
\|f\|_{L^p(0,T;\!\!X)}=
\begin{cases}
\left(\int_0^T\|f(t)\|_X^2dt\right)^{1/p},& 1\leq p<\infty,\\
\esssup_{0\leq t\leq T}\|f(t)\|_X,& p=\infty.
\end{cases}
\end{equation*}
For an $\uu:\SSS^2\times[0,T]\to\R^3$, we will sometimes write $\uu(t)$ instead of $\uu(\cdot,t)$ for the sake of readability.

\subsection{Spherical Harmonics and the space of square integrable functions}
As usual, the Hilbert space of square integrable functions on the sphere is given by
\begin{equation*}
L^2:=L^2(\SSS^{2})=\left\{
f:\SSS^2\to\R\,
\Big\vert\,\int_{\SSS^2}
\vert f(\xx)\vert^2
dS(\xx)
<\infty
\right\}
\end{equation*}
where the inner product is given by
\begin{equation*}
\langle f,g\rangle  :=\int_{S^{2}}f(\xx)g(\xx)dS(\xx),\quad f,g\in L^2.
\end{equation*}
Another way to describe $L^2$ functions on the sphere is by using spherical harmonics, see \cite{mueller1966,Freeden1998} for an introduction.
The spherical harmonics $Y_{\ell,k}$ of order $\ell\in\N_0$ and $1\leq k\leq 2\ell+1$ are the eigenfunctions of $-\Delta_*$ with corresponding eigenvalues
\begin{equation*}
\lambda_\ell=\ell(\ell+1),\quad \ell\in\N_0.
\end{equation*}
They form an orthonormal basis of $L^2$, which means that every function $f\in L^2$ has a Fourier representation
\begin{equation*}
f(\xx)=\sum_{\ell=0}^\infty\sum_{k=1}^{2n+1 }\widehat{f}_{\ell,k}Y_{\ell,k}(\xx),\quad \widehat{f}_{\ell,k}=\langle f,Y_{\ell,k}\rangle.
\end{equation*}
Using Parseval's identity, the $L^2$ inner product can then also be written in terms of the Fourier coefficients by
\begin{equation*}
\langle f,g\rangle=\sum_{\ell=0}^\infty\sum_{k=1}^{2n+1 }\widehat{f}_{\ell,k}\widehat{g}_{\ell,k},\quad f,g\in L^2.
\end{equation*}
The idea can be extended to vector-valued functions. We denote the space of square-integrable vector-valued functions which are tangential to the sphere by
\begin{equation*}
\LL^2:=\LL^2(\SSS^{2})=\left\{\uu:\SSS^2\to\R^3\,\Big\vert\,\uu\text{ tangential and }\int_{\SSS^2}\|\uu(\xx)\|_2^2dS(\xx)<\infty\right\}
\end{equation*}
where the inner product is given by
\begin{equation*}
\langle \uu,\vv\rangle  :=\int_{S^{2}}\uu(\xx)\cdot\vv(\xx)dS(\xx),\quad \uu,\vv\in \LL^2.
\end{equation*}
Again, we can build an orthonormal basis for this function space which is given by the analogue of the spherical harmonics for vector fields: the vectorial spherical harmonics. 
There are three different, $\LL_2$-orthogonal types: 
the divergence-free and the curl-free type, which are both tangential to the sphere, and one type that is normal to the sphere. 
Here, we are only interested in the tangential type.

For $\ell\geq 1$ and $1\leq k\leq 2\ell+1$ the divergence-free vector spherical harmonics are given by $\yy_{\ell,k}=\LL_*Y_{\ell,k}/\sqrt{\lambda_\ell}$ and the curl-free vector spherical harmonics are given by $\zz_{\ell,k}=\nabla^*Y_{\ell,k}/\sqrt{\lambda_\ell}$ for all . Together, they form an orthonormal basis for $\LL^2$ with respect to the $\LL^2$ norm. Moreover, $\yy_{\ell,k}$ and $\zz_{\ell,k}$ are the eigenfunctions to the vectorial Laplace Beltrami operator with corresponding eigenvalue $\lambda_\ell$. 

Again, we can write each vector field $\uu\in \LL^2$ in its Fourier form
\begin{equation*}
\uu=\sum_{\ell=1}^\infty\sum_{k=1}^{2\ell+1}\left(\widehat{\uu}^\dive_{\ell,k}\yy_{\ell,k}+\widehat{\uu}^\curl_{\ell,k}\zz_{\ell,k}\right),
\end{equation*}
where the divergence-free and the curl-free Fourier coefficients are given by
\begin{equation*}
\widehat{\uu}^\dive_{\ell,k}=\langle\uu, \yy_{\ell,k}\rangle,\quad \widehat{\uu}^\curl_{\ell,k}=\langle\uu, \zz_{\ell,k}\rangle.
\end{equation*}
The $\LL^2$ inner product can again be written as $\langle \uu,\vv\rangle=\sum_{\ell=1}^\infty\sum_{k=1}^{2n+1 }\widehat{\uu}^\dive_{\ell,k}\widehat{\vv}^\dive_{\ell,k}+\widehat{\uu}^\curl_{\ell,k}\widehat{\vv}^\curl_{\ell,k}$ for $\uu,\vv\in \LL^2$.

The Fourier expansion gives us a direct and unique decomposition of a tangential vector field $\uu$ in a divergence-free part $\uu_\dive=\sum_{\ell=0}^\infty\sum_{k=1}^{2\ell+1}\widehat{\uu}^\dive_{\ell,k}\yy_{\ell,k}$ and a curl-free part $\uu_\curl=\sum_{\ell=0}^\infty\sum_{k=1}^{2\ell+1}\widehat{\uu}^\curl_{\ell,k}\zz_{\ell,k}$.
The corresponding Hilbert spaces for square-integrable divergence-free and the curl-free functions are therefore given by
\begin{equation*}
\begin{split}
\LL^2_\dive&:=\left\{ \uu\in\LL^2\,\vert\,\widehat{\uu}^\curl_{\ell,k}=0,\;1\leq \ell\leq \infty,1\leq k\leq 2\ell+1  \right\},
\\
\LL^2_\curl&:=\left\{ \uu\in\LL^2\,\vert\,\widehat{\uu}^\dive_{\ell,k}=0,\;1\leq \ell\leq \infty,1\leq k\leq 2\ell+1 \right\}
\end{split}
\end{equation*}
with given inner products
\begin{equation*}
\begin{split}
\langle \uu,\vv\rangle_{\LL^2_\dive}
&:=\sum_{\ell=0}^\infty\sum_{k=1}^{2\ell+1}\widehat{\uu}^\dive_{\ell,k}\widehat{\vv}^\dive_{\ell,k}
=\int_{\SSS^2}\uu_\dive(\xx)\vv_\dive(\xx)dS(\xx),
\quad \uu,\vv\in \LL^2_\dive,
\\
\langle \uu,\vv\rangle_{\LL^2_\curl}
&:=\sum_{\ell=0}^\infty\sum_{k=1}^{2\ell+1}\widehat{\uu}^\curl_{\ell,k}\widehat{\vv}^\curl_{\ell,k}
=\int_{\SSS^2}\uu_\curl(\xx)\vv_\curl(\xx)dS(\xx),
\quad \uu,\vv\in \LL^2_\curl.
\end{split}
\end{equation*}
for $\uu,\vv\in \LL^2$. 
The spaces $\LL^2_\dive$ and $\LL^2_\curl$ are a direct decomposition of $\LL^2$, i.e. $ \LL^2=\LL^2_\dive\oplus\LL^2_\curl$.
Note that for sufficiently smooth $\uu\in\LL^2_\dive$ and $\vv\in\LL^2_\curl$ we have $\nabla^*\cdot\uu=0$ and $\LL^*\cdot \vv=0$. 
Finally, we define the Leray operator $\cP:\LL^2\to\LL^2_\dive$, $\uu\mapsto\uu_\dive$, which maps to the divergence-free part of a vector field $\uu\in\LL^2$. The operator, which maps to the curl-free part of a vector field is then simply given by $(id-\cP):\LL^2\to\LL^2_\curl$.
\subsection{Sobolev spaces on the sphere}
Sobolev spaces on the sphere can simply be introduced in terms of (vector) spherical harmonics. For $\sigma>0$, we define the Sobolev spaces for scalar valued functions on the sphere by $H^\sigma:=\{f\in L^2\,\big\vert\,\|f\|_{H^\sigma}<\infty\}$,  where the norm is induced by the inner product, which is given by
\begin{equation*}
\langle f,g\rangle_{H^\sigma}:=\sum_{\ell=0}^\infty(1+\lambda_\ell)^\sigma\sum_{k=1}^{N(d,\ell)}\widehat{f}_{\ell,k}\widehat{g}_{\ell,k}.
\end{equation*}
In the vector-valued case, we define the divergence-free and the curl-free Sobolev spaces by
\begin{equation*}
\begin{split}
\HH^\sigma_\dive
&=\left\{\uu\in\LL^2_\dive\,\big\vert\,\|\uu\|_{\HH^\sigma_\dive}<\infty \right\},\quad
\HH^\sigma_\curl
=\left\{\uu\in\LL^2_\curl\,\big\vert\,\|\uu\|_{\HH^\sigma_\curl}<\infty \right\},
\end{split}
\end{equation*}
where the corresponding inner products are given by
\begin{equation*}
\begin{split}
\langle\uu,\vv\rangle_{\HH^\sigma_\dive}&:=\sum_{\ell=1}^\infty(1+\lambda_\ell)^\sigma\sum_{k=1}^{2\ell+1}\widehat{\uu}^\dive_{\ell,k}\widehat{\vv}^\dive_{\ell,k},
\quad \uu,\vv\in\HH^\sigma_\dive, \\
\langle\uu,\vv\rangle_{\HH^\sigma_\curl}&:=\sum_{\ell=1}^\infty(1+\lambda_\ell)^\sigma\sum_{k=1}^{2\ell+1}\widehat{\uu}^\curl_{\ell,k}\widehat{\vv}^\curl_{\ell,k}
\quad \uu,\vv\in\HH^\sigma_\curl .
\end{split}
\end{equation*}
The tangential Sobolev space is then simply given by the direct sum $\HH^\sigma=\HH^\sigma_\dive\oplus\HH^\sigma_\curl$ with inner product $\langle\uu,\vv\rangle_{\HH^\sigma}=\langle\uu,\vv\rangle_{\HH^\sigma_\dive}+\langle\uu,\vv\rangle_{\HH^\sigma_\curl}$ for $\uu,\vv\in\HH^\sigma$.
Finally, note that if $\sigma\in\N_0$, the norm for $\HH^\sigma$ can also be written as
\begin{equation*}
\|\uu\|_{\HH^\sigma}=\|(1-\DDelta^*)^\sigma\uu\|_{\LL^2}.
\end{equation*}
This also applies to the $\HH^\sigma_\dive$ and $\HH^\sigma_\curl$ norms.

\subsection{Auxiliary results}
For the following analysis, we will need some auxiliary results for fuctions from the Sobolev space.
It is easy to see that if $\uu$ is a tangential vector field, then the zero-th Fourier coefficients of its scalar-valued components vanishes, i.e. $\widehat{(u_i)}_{0,0}=0$ for $1\leq i\leq 3$, see \cite[Theorem 12.4.1]{Freeden1998}.
The first result, which uses this fact, is a generalization of \cite[Lemma 3.10]{fengler2004} and gives us a connection between the regularity of a vector field and its components. 
Since the idea of proof does not change, it will be omitted.
\begin{lemma}\label{6::lemma:2}
Let $\sigma\geq 0$ and suppose that $\uu\in\HH^\sigma$. Then, 
\begin{equation*}
\|u_i\|_{H^\sigma}\leq C\|\uu\|_{\HH^\sigma},\quad i=1,2,3.
\end{equation*}
\end{lemma}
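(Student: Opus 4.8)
The plan is to pass through vector spherical harmonics and reduce everything to a componentwise estimate on the building blocks $\yy_{\ell,k}$ and $\zz_{\ell,k}$. Write $\uu=\sum_{\ell\ge 1}\sum_{k=1}^{2\ell+1}(\widehat{\uu}^\dive_{\ell,k}\yy_{\ell,k}+\widehat{\uu}^\curl_{\ell,k}\zz_{\ell,k})$, so that the $i$-th scalar component is $u_i=\sum_{\ell,k}(\widehat{\uu}^\dive_{\ell,k}(\yy_{\ell,k})_i+\widehat{\uu}^\curl_{\ell,k}(\zz_{\ell,k})_i)$. The key point I would isolate is that each Cartesian component $(\yy_{\ell,k})_i$ and $(\zz_{\ell,k})_i$, while not itself a spherical harmonic, is a finite linear combination of scalar spherical harmonics of degrees $\ell-1$, $\ell$, and $\ell+1$ only, with coefficients bounded uniformly in $\ell,k$. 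This is exactly the content of the classical relations expressing $\nabla^*Y_{\ell,k}$ and $\LL^*Y_{\ell,k}$ (equivalently the "$x_i\,\cdot$", tangential-gradient, and surface-curl interactions) in the spherical harmonic basis; see \cite{Freeden1998}. Since $\lambda_{\ell-1},\lambda_\ell,\lambda_{\ell+1}$ are all comparable (there is a constant $c$ with $c^{-1}(1+\lambda_\ell)\le 1+\lambda_{\ell\pm1}\le c(1+\lambda_\ell)$ for $\ell\ge1$), the weight $(1+\lambda_m)^\sigma$ appearing in the $H^\sigma$ norm of $u_i$ at degree $m\in\{\ell-1,\ell,\ell+1\}$ is comparable to $(1+\lambda_\ell)^\sigma$.

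Concretely, I would expand $\|u_i\|_{H^\sigma}^2=\sum_{m\ge 0}(1+\lambda_m)^\sigma\sum_{n=1}^{2m+1}|\widehat{(u_i)}_{m,n}|^2$, and substitute the triangle-band expansions of $(\yy_{\ell,k})_i$, $(\zz_{\ell,k})_i$. Collecting the contributions to a fixed degree $m$, each Fourier coefficient $\widehat{(u_i)}_{m,n}$ is a bounded linear combination of finitely many $\widehat{\uu}^\dive_{\ell,k}$ and $\widehat{\uu}^\curl_{\ell,k}$ with $\ell\in\{m-1,m,m+1\}$; applying Cauchy–Schwarz (finitely many terms) and then summing over $m,n$ gives, using the comparability of the $\lambda$-weights,
\begin{equation*}
\|u_i\|_{H^\sigma}^2\le C\sum_{\ell\ge 1}(1+\lambda_\ell)^\sigma\sum_{k=1}^{2\ell+1}\bigl(|\widehat{\uu}^\dive_{\ell,k}|^2+|\widehat{\uu}^\curl_{\ell,k}|^2\bigr)=C\|\uu\|_{\HH^\sigma}^2,
\end{equation*}
which is the claimed inequality after taking square roots. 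The remark in the excerpt that $\widehat{(u_i)}_{0,0}=0$ for tangential $\uu$ takes care of the degree-$0$ term, which would otherwise be problematic since $\lambda_0=0$ and there is no corresponding vector harmonic at $\ell=0$; with that term absent the sums range over $\ell\ge 1$ and everything matches up.

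The main obstacle is the bookkeeping for the band expansion: one must know, with explicit (or at least uniformly bounded) coefficients, how $(\yy_{\ell,k})_i=(\LL^*Y_{\ell,k})_i/\sqrt{\lambda_\ell}$ and $(\zz_{\ell,k})_i=(\partial^*_i Y_{\ell,k})/\sqrt{\lambda_\ell}$ decompose into scalar spherical harmonics of neighbouring degrees, and that these coefficients do not grow with $\ell$ after the $1/\sqrt{\lambda_\ell}$ normalisation. These are standard recursion/ladder identities for spherical harmonics (analogous to $x_j Y_{\ell,k}$ having components only at degrees $\ell\pm1$), so the argument is routine once they are quoted; this is precisely why, as the paper notes, the proof "does not change" from \cite[Lemma 3.10]{fengler2004} and may be omitted. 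I would simply cite \cite{Freeden1998} for the relevant identities and the $O(1)$ bound on the recombination coefficients, and present the weight-comparability and Cauchy–Schwarz steps as above.
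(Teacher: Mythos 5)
Your argument is correct and is essentially the standard (and presumably the intended) proof: the paper itself omits the argument, deferring to the generalization of \cite[Lemma 3.10]{fengler2004}, and that route is exactly your band decomposition — the Cartesian components of $\yy_{\ell,k}$ (which stay at degree $\ell$, being angular-momentum derivatives of $Y_{\ell,k}$) and of $\zz_{\ell,k}$ (degrees $\ell\pm1$) expand into boundedly many scalar harmonics with coefficients that are $O(1)$ after the $1/\sqrt{\lambda_\ell}$ normalisation, after which weight comparability and Cauchy--Schwarz give the claim. One small remark: your appeal to the statement $\widehat{(u_i)}_{0,0}=0$ is unnecessary — for $\sigma\geq 0$ the degree-zero band carries weight $(1+\lambda_0)^\sigma=1$ and is fed only by finitely many $\ell=1$ coefficients, so your Cauchy--Schwarz step already controls it; this is just as well, since that vanishing actually holds only for the divergence-free part (for instance $\uu=\nabla^*x_3=\ee_3-x_3\xx$ is tangential, yet its third component $1-x_3^2$ has mean $2/3$).
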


The next auxiliary result gives us the regularity of the product of two Sobolev functions. The proof can be found in \cite{agranovic1965} and \cite{skiba2017}.
\begin{lemma}\label{6::lemma:3}
Let $\sigma>1$ and $f,g\in H^\sigma$. Then, $fg\in H^\sigma$ and there exists a constant $C=C(\sigma)>0$ such that
\begin{equation*}
\|fg\|_{H^\sigma}\leq C\|f\|_{H^\sigma}\|g\|_{H^\sigma}.
\end{equation*} 
\end{lemma}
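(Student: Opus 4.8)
To prove the lemma directly — it is the statement that $H^\sigma(\SSS^2)$ is a Banach algebra, which holds precisely because $\sigma$ exceeds the critical exponent $1$, half the dimension of $\SSS^2$ — the plan is to run a Littlewood--Paley / paraproduct argument adapted to spherical harmonics. First I would fix a smooth dyadic decomposition: letting $P_\ell$ denote the $L^2$-projection onto spherical harmonics of degree $\ell$, set $\Delta_j:=\sum_\ell\psi(2^{-j}\sqrt{1+\lambda_\ell})\,P_\ell$ for a fixed bump $\psi$ supported in $[1/2,2]$ with $\sum_{j\ge0}\psi(2^{-j}t)=1$ for $t\ge1$, and write $S_j:=\sum_{j'<j}\Delta_{j'}$ for the associated low-pass truncations, so that $\|f\|_{H^\sigma}^2\asymp\sum_{j\ge0}2^{2\sigma j}\|\Delta_j f\|_{L^2}^2$.

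Two elementary facts take the place of the Euclidean Bernstein inequalities, both following from the addition theorem $\sum_{k=1}^{2\ell+1}|Y_{\ell,k}(\xx)|^2=(2\ell+1)/(4\pi)$ and the Cauchy--Schwarz inequality: (i) any spherical polynomial $q$ of degree $\le D$ satisfies $\|q\|_{L^\infty(\SSS^2)}\le CD\,\|q\|_{L^2(\SSS^2)}$; and (ii) the same computation with the weight $(1+\lambda_\ell)^{-\sigma}$ yields the embedding $\|f\|_{L^\infty(\SSS^2)}\le C\|f\|_{H^\sigma}$ for $\sigma>1$, since the series $\sum_\ell(2\ell+1)(1+\lambda_\ell)^{-\sigma}$ converges exactly when $\sigma>1$. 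I would also use that the smooth truncations $S_j$ and $\Delta_j$ are bounded on $L^\infty(\SSS^2)$ uniformly in $j$, as they arise as rapidly decaying functions of the Laplace--Beltrami operator.

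Next I would split, with $N$ a fixed integer and $\tilde\Delta_j:=\sum_{|j'-j|\le N}\Delta_{j'}$, $fg=\sum_j(S_{j-N}f)(\Delta_j g)+\sum_j(\Delta_j f)(S_{j-N}g)+\sum_j(\Delta_j f)(\tilde\Delta_j g)$, and estimate the three pieces in $H^\sigma$. For the first (low--high) paraproduct the decisive spectral fact is that the product of a polynomial of degree $\le 2^{j-N}$ with one whose degrees lie in the $j$-th block has all its degrees in an annulus of size $\sim 2^j$; this comes from the selection rule $|\ell_1-\ell_2|\le\ell_3\le\ell_1+\ell_2$ for the linearisation (Gaunt) coefficients $\int_{\SSS^2}Y_{\ell_1,k_1}Y_{\ell_2,k_2}Y_{\ell_3,k_3}\,dS$, which on the sphere substitutes for the convolution identity $\widehat{fg}=\hat f\ast\hat g$ available on $\R^d$. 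Hence the summands are almost orthogonal in $H^\sigma$, and using (i), $\|\sum_j(S_{j-N}f)(\Delta_jg)\|_{H^\sigma}^2\lesssim\sum_j2^{2\sigma j}\|S_{j-N}f\|_{L^\infty}^2\|\Delta_jg\|_{L^2}^2\lesssim\|f\|_{L^\infty}^2\|g\|_{H^\sigma}^2$; the second piece is symmetric. The diagonal piece has lost the annular localisation (its product only has degree $\lesssim 2^j$), so I would bound it term by term, $\|(\Delta_j f)(\tilde\Delta_j g)\|_{H^\sigma}\le C2^{(\sigma+1)j}\|\Delta_j f\|_{L^2}\|\tilde\Delta_j g\|_{L^2}$ by (i) used twice, and then sum by Cauchy--Schwarz, distributing the power $2^{(\sigma+1)j}$ and using the embedding $H^\sigma\hookrightarrow H^1$, which gives $\le C\|f\|_{H^\sigma}\|g\|_{H^\sigma}$. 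Adding the three contributions and invoking (ii) to replace the $L^\infty$ norms by $H^\sigma$ norms yields $\|fg\|_{H^\sigma}\le C\|f\|_{H^\sigma}\|g\|_{H^\sigma}$, which is the claim.

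The main obstacle is the spectral bookkeeping for products on the sphere: on $\R^d$ the identity $\widehat{fg}=\hat f\ast\hat g$ makes the propagation of frequency supports transparent, whereas here it must be extracted from the triangle selection rule for triple products of spherical harmonics and combined with the ``Bernstein'' bound (i) and the uniform $L^\infty$-boundedness of the dyadic blocks; once those are in place the summations are the routine Littlewood--Paley ones. I would also stress that the argument must be run at fractional order throughout and cannot be reduced to integer orders by interpolation against a Banach-algebra endpoint, since the critical exponent here is the integer $1$ and $H^1(\SSS^2)$ is not an algebra — this is exactly why the sharp hypothesis is $\sigma>1$. A shorter alternative is to transport the classical Euclidean fact that $H^\sigma(\R^2)$ is an algebra for $\sigma>1$ through a finite atlas with a subordinate partition of unity; then the effort shifts to establishing the equivalence of the spherical-harmonic norm with the chart-based Sobolev norm and to handling the cut-offs, as carried out in \cite{agranovic1965,skiba2017}.
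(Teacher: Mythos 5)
Your argument is essentially correct, but it is genuinely different from what the paper does: the paper offers no proof at all for this lemma and simply refers to the literature (Agranovich; Skiba), i.e.\ to the classical fact that $H^\sigma$ is a Banach algebra above half the dimension, obtained there by chart-based/localization arguments of the kind you sketch only as your ``shorter alternative'' at the end. What you propose instead is an intrinsic spherical-harmonic paraproduct proof, and its ingredients check out: the Bony-type splitting with $S_{j-N}$, $\Delta_j$, $\tilde\Delta_j$ is an exact decomposition; the addition-theorem Bernstein bound $\|q\|_{L^\infty}\lesssim D\|q\|_{L^2}$ and the embedding $H^\sigma\hookrightarrow L^\infty$ for $\sigma>1$ are correct; the Gaunt selection rule $|\ell_1-\ell_2|\le\ell_3\le\ell_1+\ell_2$ does give the annular (resp.\ ball) spectral localization of the low--high (resp.\ resonance) pieces, so the almost-orthogonality summation and the term-by-term Cauchy--Schwarz bound of the diagonal piece via $H^\sigma\hookrightarrow H^1$ go through, and the proof makes visible exactly where $\sigma>1$ is used. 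Two small points you assert rather than prove: the finite-overlap/almost-orthogonality of the annuli in the $H^\sigma$ sum (routine, but worth a sentence), and the uniform $L^\infty$-boundedness of $S_j$ and $\Delta_j$, which on the sphere needs multiplier/kernel estimates if stated in that generality --- though in your setting it can be bypassed entirely, since the same Cauchy--Schwarz computation as in your point (ii) gives $\|S_{j-N}f\|_{L^\infty}\le C\|f\|_{H^\sigma}$ directly for $\sigma>1$, uniformly in $j$. The trade-off is clear: the paper's citation (or your chart-transport alternative) is short but imports the norm-equivalence and cutoff bookkeeping from elsewhere, while your spectral proof is self-contained, stays in the Fourier framework the paper actually uses for its norms, and exposes the sharpness of the hypothesis $\sigma>1$.
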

We will need the following extension of Lemma \ref{6::lemma:3} for vector fields.
\begin{lemma}\label{6::lemma:4}
Let $\sigma>1$.
\begin{enumerate}
\item[a)] Let $\uu,\vv\in \HH^\sigma$. Then, $\uu\cdot\vv\in H^\sigma$ and there exists a constant $C=C(\sigma)>0$ such that
\begin{equation*}
\|\uu\cdot\vv\|_{H^\sigma}\leq C\|\uu\|_{\HH^\sigma}\|\vv\|_{\HH^\sigma}.
\end{equation*} 
\item[b)] Let $f\in H^\sigma$ and $\uu\in \HH^\sigma$. Then, $f\uu\in \HH^\sigma$ and there exists a constant $C=C(\sigma)>0$ such that
\begin{equation*}
\|f\uu \|_{\HH^\sigma}\leq C\|f\|_{H^\sigma}\|\uu\|_{\HH^\sigma}.
\end{equation*} 
\end{enumerate}

\end{lemma}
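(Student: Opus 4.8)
The plan is to reduce both parts to the scalar multiplication estimate of Lemma~\ref{6::lemma:3} by passing to Cartesian components, using Lemma~\ref{6::lemma:2} to control the components of a tangential vector field by the field itself and, for part~b), a companion estimate in the opposite direction. The only point I expect to require some care is this reverse estimate; once it is available, everything else is routine bookkeeping.

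For a), write $\uu\cdot\vv=\sum_{i=1}^{3}u_iv_i$. By Lemma~\ref{6::lemma:2}, each $u_i$ and $v_i$ belongs to $H^\sigma$ with $\|u_i\|_{H^\sigma}\le C\|\uu\|_{\HH^\sigma}$ and $\|v_i\|_{H^\sigma}\le C\|\vv\|_{\HH^\sigma}$. Since $\sigma>1$, Lemma~\ref{6::lemma:3} applies to each product and gives $u_iv_i\in H^\sigma$ with $\|u_iv_i\|_{H^\sigma}\le C\|u_i\|_{H^\sigma}\|v_i\|_{H^\sigma}$. The triangle inequality then yields
\[
\|\uu\cdot\vv\|_{H^\sigma}\le\sum_{i=1}^{3}\|u_iv_i\|_{H^\sigma}\le C\|\uu\|_{\HH^\sigma}\|\vv\|_{\HH^\sigma},
\]
which is the claim of a).

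For b), note first that $f\uu$ is again tangential, since $(f\uu)(\xx)\cdot\xx=f(\xx)(\uu(\xx)\cdot\xx)=0$, and that its Cartesian components are $fu_i$, $i=1,2,3$. The ingredient beyond Lemmas~\ref{6::lemma:2} and~\ref{6::lemma:3} is a reverse form of Lemma~\ref{6::lemma:2}: there is a constant $C>0$ with
\[
\|\vv\|_{\HH^\sigma}\le C\left(\sum_{i=1}^{3}\|v_i\|_{H^\sigma}^2\right)^{1/2}
\]
for every tangential $\vv\in\HH^\sigma$. This follows from the relation between the Cartesian components of the vector spherical harmonics $\yy_{\ell,k},\zz_{\ell,k}$ and the scalar spherical harmonics $Y_{\ell',k'}$: only degrees with $|\ell-\ell'|\le 1$ are coupled, so the weights $(1+\lambda_\ell)^\sigma$ and $(1+\lambda_{\ell'})^\sigma$ entering the two norms are comparable, and together with Lemma~\ref{6::lemma:2} this gives the norm equivalence generalizing \cite[Lemma~3.10]{fengler2004}. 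Applying this to $\vv=f\uu$, then Lemma~\ref{6::lemma:3} (valid since $\sigma>1$) to each $fu_i$, and finally Lemma~\ref{6::lemma:2}, we obtain
\[
\|f\uu\|_{\HH^\sigma}\le C\left(\sum_{i=1}^{3}\|fu_i\|_{H^\sigma}^2\right)^{1/2}\le C\|f\|_{H^\sigma}\left(\sum_{i=1}^{3}\|u_i\|_{H^\sigma}^2\right)^{1/2}\le C\|f\|_{H^\sigma}\|\uu\|_{\HH^\sigma},
\]
which proves b).

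The crux is therefore the reverse componentwise estimate; it rests on the same (vector) spherical-harmonics computation that underlies Lemma~\ref{6::lemma:2}, and can alternatively be derived, for $\sigma\in\N_0$, from the characterisation of $\|\cdot\|_{\HH^\sigma}$ through powers of $1-\DDelta^*$ using that on tangential fields $\DDelta^*$ reduces, up to the tangential projection, to the componentwise scalar Laplace--Beltrami operator, and then extended to all $\sigma\ge0$ by interpolation. I do not anticipate any other obstacle.
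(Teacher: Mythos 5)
Your part a) is exactly the paper's argument (componentwise reduction via Lemma \ref{6::lemma:2}, then Lemma \ref{6::lemma:3} and the triangle inequality), so nothing to add there. For part b) you take a different route, and its pivotal step — the reverse componentwise bound $\|\vv\|_{\HH^\sigma}\le C\bigl(\sum_{i=1}^3\|v_i\|_{H^\sigma}^2\bigr)^{1/2}$ for tangential $\vv$ — is true, but your justification does not establish it. The observation that the Cartesian components of $\yy_{\ell,k},\zz_{\ell,k}$ only involve scalar harmonics of degrees $\ell'\in\{\ell-1,\ell,\ell+1\}$, together with comparability of the weights $(1+\lambda_\ell)^\sigma$ and $(1+\lambda_{\ell'})^\sigma$, only yields an upper bound on the components (i.e. Lemma \ref{6::lemma:2} again); for the lower bound you must show that the map from the vector coefficients $(\widehat{\vv}^{\dive}_{\ell,k},\widehat{\vv}^{\curl}_{\ell,k})$ to the scalar coefficients of $(v_1,v_2,v_3)$ is uniformly bounded below, i.e. that no cancellation can make all three components small in $H^\sigma$ while $\vv$ is large in $\HH^\sigma$ — which is precisely the inequality being claimed, so the argument as stated is circular. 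Your fallback argument is also incomplete: the componentwise scalar Laplace--Beltrami operator applied to a tangential field is not tangential (already for $\vv=\nabla^*Y_{1,k}$ one gets a nonzero normal residue), so $(1-\DDelta^*)^m\vv$ is a composition $p_{tan}(1-\Delta^*)p_{tan}\cdots p_{tan}\vv$ in which, for $m\ge2$, the interior projections cannot simply be dropped; one needs commutator estimates, and passing to non-integer $\sigma$ needs an interpolation argument the paper has not set up. So as written there is a genuine gap at the crux you yourself identified.

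The good news is that the missing estimate can be proved with the very ingredient the paper's own proof of b) uses: for tangential $\vv$ one has $\|\vv\|_{\HH^\sigma}\le C\bigl(\|\nabla^*\cdot\vv\|_{H^{\sigma-1}}+\|\LL^*\cdot\vv\|_{H^{\sigma-1}}\bigr)$ (immediate from the Fourier representations, since $\lambda_\ell/(1+\lambda_\ell)$ is bounded below for $\ell\ge1$), and since by definition $\nabla^*\cdot\vv=\sum_i\partial_i^*v_i$ and $\LL^*\cdot\vv=\sum_i(\LL^*v_i)\cdot\ee_i$, each of these terms is bounded by $C\sum_i\|v_i\|_{H^\sigma}$ using Lemma \ref{6::lemma:2}. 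With that supplement your proof of b) closes and is genuinely different from the paper's: the paper applies the product rule to $\nabla^*\cdot(f\uu)$ and $\LL^*\cdot(f\uu)$ and then invokes part a) and Lemma \ref{6::lemma:3} at the index $\sigma-1$ (which, as written, needs $\sigma-1>1$), whereas your componentwise reduction invokes Lemma \ref{6::lemma:3} only at index $\sigma$ and hence covers the whole stated range $\sigma>1$. But until the reverse componentwise estimate is actually proved rather than asserted, the proposal is not a complete proof.
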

\begin{proof}
Part a) of this lemma follows easily by the triangle inequality, Lemma \ref{6::lemma:2} and Lemma \ref{6::lemma:3}, which together yield
\begin{equation*}
\|\uu\cdot\vv \|_{H^\sigma}
\leq \sum_{i=1}^3\|u_iv_i\|_{H^\sigma}
\leq C\sum_{i=1}^3\|u_i\|_{H^\sigma}\|v_i\|_{H^\sigma}
\leq 3C\|\uu \|_{\HH^\sigma}\|\vv \|_{\HH^\sigma}.
\end{equation*}
To prove the second part, we first note that we can bound the norm of a vector field $\vv\in\HH^\sigma$ by
\begin{equation*}
\|\vv \|_{\HH^\sigma}\leq  C\left(	\|\nabla^*\cdot\vv \|_{H^{\sigma-1}}+\|\LL^*\cdot \vv \|_{H^{\sigma-1}}\right).
\end{equation*}
Setting $\vv=f\uu$ and using the product rule yields
\begin{equation*}
\begin{split}
\|f\uu \|_{\HH^\sigma}
\leq& 
C\big(
\|f\nabla^*\cdot\uu \|_{H^{\sigma-1}}+
\|\nabla^* f\cdot\uu \|_{H^{\sigma-1}}\\
&+
\|f\LL^*\cdot\uu \|_{H^{\sigma-1}}+
\|\LL^* f\cdot\uu \|_{H^{\sigma-1}}\big).
\end{split}
\end{equation*}
Using the first part of this lemma together with Lemma \ref{6::lemma:4} finishes the proof.
\end{proof}
Finally, to derive high-order estimates for the non-linear operator $\cB$, we will need the following result about the norm of partial derivatives of a vector field.
\begin{lemma}\label{6::lemma:1}
Let $\sigma>2$ and $\uu\in\HH^\sigma$. Then, we have
\begin{equation*}
\sum_{j=1}^3\|\partial_j^*\uu\|_{\HH^\sigma}^2
\leq
\|\uu\|_{\HH^{\sigma+1}}^2.
\end{equation*}
This inequality also holds for the $\HH^\sigma_\dive$ and $\HH^\sigma_\curl$ norms.
\end{lemma}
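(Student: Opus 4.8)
The plan is to reduce the claimed inequality to a statement purely about Fourier coefficients and the known fact that the vectorial spherical harmonics $\yy_{\ell,k}$ and $\zz_{\ell,k}$ diagonalise $\DDelta^*$. The key point to establish is that, for each fixed $j$, applying $\partial_j^*$ to a tangential vector field $\uu$ raises the frequency index $\ell$ by at most one, in the sense that $\partial_j^*\uu$ has no Fourier mass on harmonics of degree $\geq \ell+2$ coming from a degree-$\ell$ component of $\uu$, and moreover the total energy is controlled. Concretely, I would first recall (from \cite{Freeden1998}) the recurrence/Clebsch--Gordan-type relations expressing $\partial_j^* \yy_{\ell,k}$ and $\partial_j^* \zz_{\ell,k}$ as finite linear combinations of $\yy_{\ell\pm1,k'}$, $\zz_{\ell\pm1,k'}$ (and possibly the normal harmonics, which drop out after we note $\partial_j^*$ lands in a tangential-plus-normal decomposition but the relevant projections are all we need). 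The crucial quantitative input is the pointwise identity $\sum_{j=1}^3 (\partial_j^* f)^2 = |\nabla^* f|^2$ for scalars, hence $\sum_{j=1}^3 \|\partial_j^* f\|_{L^2}^2 = \|\nabla^* f\|_{L^2}^2 = \langle -\Delta^* f, f\rangle = \sum_\ell \lambda_\ell \sum_k |\widehat f_{\ell,k}|^2$.

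The second step is to lift this to vector fields by working componentwise. Writing $\uu = (u_1,u_2,u_3)$ and using that $\partial_j^*$ acts on each scalar component, I would expand $\|\partial_j^*\uu\|_{\HH^\sigma}^2$. Since $\HH^\sigma = \HH^\sigma_\dive \oplus \HH^\sigma_\curl$, and the $\HH^\sigma$ norm is (up to the direct-sum structure) $\|(1-\DDelta^*)^{\sigma/2}\uu\|_{\LL^2}^2$ in the sense of the spherical-harmonic weights $(1+\lambda_\ell)^\sigma$, the natural move is to commute $(1-\DDelta^*)^{\sigma/2}$ past $\partial_j^*$. These do not commute exactly on the sphere, but on each eigenspace the discrepancy is bounded because $\partial_j^*$ maps degree $\ell$ into degrees $\ell\pm 1$, so the weight ratio $(1+\lambda_{\ell\pm1})^\sigma/(1+\lambda_\ell)^\sigma$ is bounded. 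Thus one obtains
\begin{equation*}
\sum_{j=1}^3 \|\partial_j^*\uu\|_{\HH^\sigma}^2 \;\leq\; C \sum_{\ell} (1+\lambda_\ell)^\sigma \,\lambda_\ell \sum_{k} \left(|\widehat{\uu}^\dive_{\ell,k}|^2 + |\widehat{\uu}^\curl_{\ell,k}|^2\right) \;\leq\; C' \|\uu\|_{\HH^{\sigma+1}}^2,
\end{equation*}
using $(1+\lambda_\ell)^\sigma \lambda_\ell \leq (1+\lambda_\ell)^{\sigma+1}$. The statement as written has constant $1$, so I would be more careful: the cleanest route is to first prove the scalar identity $\sum_{j=1}^3\|\partial_j^* f\|_{H^\sigma}^2 \le \|f\|_{H^{\sigma+1}}^2$ directly from $\sum_j(\partial_j^* f)^2=|\nabla^* f|^2 = -\Delta^* (f^2)/2 + f(-\Delta^*)f$ ... actually more simply from $\sum_j \|\partial_j^*(1-\Delta^*)^{\sigma/2} f\|_{L^2}^2 = \|\nabla^*(1-\Delta^*)^{\sigma/2}f\|_{L^2}^2 = \sum_\ell \lambda_\ell(1+\lambda_\ell)^\sigma\sum_k|\widehat f_{\ell,k}|^2 \le \|f\|_{H^{\sigma+1}}^2$, \emph{provided} $\partial_j^*$ and $(1-\Delta^*)^{\sigma/2}$ commute on scalars — which they do, since $(1-\Delta^*)^{\sigma/2}$ is a Fourier multiplier in $\lambda_\ell$ and $\partial_j^*$ is $\mathrm{SO}(3)$-equivariant in the appropriate sense. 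Then sum over the three components of $\uu$ and invoke Lemma \ref{6::lemma:2}; here a small constant $C$ from that lemma would reappear, so strictly the stated clean inequality requires the sharper observation that $\sum_{i=1}^3\|u_i\|_{H^{\sigma+1}}^2 = \|\uu\|_{\HH^{\sigma+1}}^2$ exactly for tangential fields (not just $\leq C$), which follows because $\sum_i |u_i|^2 = |\uu|^2$ pointwise and the weights are the common $(1+\lambda_\ell)^{\sigma+1}$ on the joint eigenspaces.

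The main obstacle I anticipate is the non-commutativity of $\partial_j^*$ with the Laplace--Beltrami-based Sobolev weights on the sphere: unlike on the torus, $\partial_j^*$ is not a simple Fourier multiplier, and one must justify carefully that it only shifts frequencies by $\pm 1$ and that the componentwise decomposition interacts cleanly with the $\HH^\sigma_\dive \oplus \HH^\sigma_\curl$ splitting. I would handle this by citing the explicit action of $\nabla^*$ (equivalently the $\partial_j^*$) on vector spherical harmonics from \cite{Freeden1998}, combined with the pointwise Pythagorean identities $\sum_j(\partial_j^* f)^2 = |\nabla^* f|^2$ and $\sum_i u_i^2 = |\uu|^2$ to avoid tracking the combinatorial constants in the recurrences altogether. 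The remark that the inequality also holds for the $\HH^\sigma_\dive$ and $\HH^\sigma_\curl$ norms follows from the same computation restricted to, respectively, the $\yy_{\ell,k}$-coefficients or the $\zz_{\ell,k}$-coefficients, once one checks that $\partial_j^*$ applied componentwise does not create divergence-free mass from curl-free mass in a way that violates the bound — which again is transparent at the level of the coefficient sums since each summand is nonnegative and we only ever discard cross terms.
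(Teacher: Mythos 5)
Your overall skeleton (sum over $j$, integrate by parts to reach the quadratic form of $-\Delta^*$ in the weighted inner product, finish with $\lambda_\ell(1+\lambda_\ell)^\sigma\le(1+\lambda_\ell)^{\sigma+1}$) is the same as the paper's, but the two claims you rely on to make it precise are false, and they are exactly where the difficulty sits. First, $\partial_j^*$ does \emph{not} commute with $(1-\Delta^*)^{\sigma/2}$: take $f$ proportional to $x_3$, a degree-$1$ harmonic; then $\partial_3^* f=1-x_3^2$ contains scalar harmonics of degrees $0$ and $2$, so it is not a $\lambda_1$-eigenfunction, whereas commutation would force $(1-\Delta^*)^{\sigma/2}\partial_3^* f=(1+\lambda_1)^{\sigma/2}\partial_3^* f$. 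This contradicts your own earlier observation that $\partial_j^*$ shifts the degree by $\pm1$: an operator that moves mass between eigenspaces cannot commute with a non-constant spectral multiplier, and the appeal to $\mathrm{SO}(3)$-equivariance does not help ($\partial_j^*$ differentiates along the fixed ambient direction $\ee_j$, and even an equivariant operator commutes with multipliers only if it preserves each eigenspace). Second, the exact identity $\sum_{i}\|u_i\|^2_{H^{\sigma+1}}=\|\uu\|^2_{\HH^{\sigma+1}}$ for tangential fields holds only at the $L^2$ level: the components of $\zz_{\ell,k}=\nabla^*Y_{\ell,k}/\sqrt{\lambda_\ell}$ are combinations of scalar harmonics of degrees $\ell\pm1$ (for $Y\propto x_3$ the third component of $\nabla^*Y$ is a multiple of $1-x_3^2$, mixing degrees $0$ and $2$), so there are no ``joint eigenspaces'' and the scalar and vectorial Sobolev weights do not match. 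This is precisely why Lemma \ref{6::lemma:2} carries a constant $C$; consequently the componentwise detour cannot yield the constant-$1$ inequality claimed in the lemma, and your first route through recurrence relations introduces constants as you yourself concede.

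The paper's proof never passes to scalar components: it stays in the vector spherical harmonic expansion, sets $\vv=(1-\DDelta^*)^\sigma\uu$, sums over $j$ so that only the full quadratic form $-\langle\DDelta^*\vv,\vv\rangle_{\LL^2_\dive}$ appears, uses $\widehat{(\DDelta^*\uu)}^\dive_{\ell,k}=-\lambda_\ell\,\widehat{\uu}^\dive_{\ell,k}$ to identify this with $\sum_\ell\lambda_\ell(1+\lambda_\ell)^\sigma\sum_k(\widehat{\uu}^\dive_{\ell,k})^2$, and bounds it by $\|\uu\|^2_{\HH^{\sigma+1}_\dive}$; the $\HH^\sigma_\dive$, $\HH^\sigma_\curl$ and full $\HH^\sigma$ statements are the same multiplier computation in the respective bases. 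A repaired version of your argument should do the same: keep the Sobolev weight attached to $\uu$ rather than to $\partial_j^*\uu$, and only ever use the summed quantity $\sum_j\|\partial_j^*\,\cdot\,\|^2_{\LL^2}=-\langle\DDelta^*\cdot,\cdot\rangle_{\LL^2}$, instead of commuting the Bessel weight past a single $\partial_j^*$ or invoking a componentwise Parseval identity that fails for $\sigma+1\neq 0$.
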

\begin{proof}
We will show the inequality only for the divergence-free part, the proof for the curl-free part is nearly the same.
With $\vv=(1-\DDelta^*)^\sigma\uu$, we can rewrite the norm as 
\begin{equation*}
\|\partial_j^*\uu\|_{\HH^\sigma_\dive}^2
=
\langle\partial_j^*\vv,\partial_j^*\vv\rangle_{\LL^2_\dive}
=
-\langle(\partial_j^*)^2\vv,\vv\rangle_{\LL^2_\dive}
\end{equation*}
Summing over $j$ and using that $\widehat{(\DDelta^*\uu)}^\dive_{\ell,k}=-\lambda_\ell\widehat{\uu}^\dive_{\ell,k}$ then yields
\begin{equation*}
\begin{split}
\sum_{j=1}^3\|\partial_j^*\uu\|_{\HH^\sigma_\dive}^2
=&
-\langle\DDelta^*\vv,\vv\rangle_{\LL^2_\dive}
=
-\langle\DDelta^*\uu,\uu\rangle_{\HH^\sigma_\dive}\\
=&-\sum_{\ell=1}^\infty(1+\lambda_\ell)^\sigma\sum_{k=1}^{2\ell+1}\widehat{(\DDelta^*\uu)}^\dive_{\ell,k}\widehat{\uu}^\dive_{\ell,k}
\\
=&\sum_{\ell=1}^\infty\lambda_\ell(1+\lambda_\ell)^\sigma\sum_{k=1}^{2\ell+1}(\widehat{\uu}^\dive_{\ell,k})^2
\leq
\|\uu\|_{\HH^{\sigma+1}_\dive}^2,
\end{split}
\end{equation*}
which finishes the proof.
\end{proof}

\section{Kernel-Based Interpolation and Native Spaces}
In this section we will give a brief introduction to the tools needed for the kernel-based interpolation of vector fields on the sphere. 
For a general introduction to kernel-based approximation, see, for example, \cite{wendland2004}.
\subsection{Native spaces for scalar valued functions on the sphere}
We start with a spherical basis function (SBF) $\phi:\SSS^2\times\SSS^2\to\R$ of the form
\begin{equation*}
\phi(\xx,\yy)=\sum_{\ell=0}^\infty\sum_{k=1}^{2\ell+1}\widehat{\phi}_\ell Y_{\ell,k}(\xx)Y_{\ell,k}(\yy),\quad \xx,\yy\in\SSS^2,
\end{equation*}
with certain Fourier coefficients $\widehat{\phi}_\ell\in\R$ such that  $\sum_{\ell=0}^\infty(2\ell+1)\widehat{\phi}_\ell<\infty$. This kernel can be seen as the Fourier series of a zonal function, see \cite[Ch. 17]{wendland2004}. 
The kernel $\phi$ is positive definite if $\widehat{\phi}_\ell>0$ for all $\ell\geq 0$. 
In this case the native space of the kernel $\phi$ and its inner product are given by
\begin{equation*}
\cN_{\phi}:=\left\{ f\in L^2\,\Big\vert\,\sum_{\ell=0}^\infty\sum_{k=1}^{2\ell+1}\frac{\widehat{f}_{\ell,k}^{\,2}}{\widehat{\phi}_\ell}<\infty\right\},\quad
\langle f,g\rangle_\phi:=\sum_{\ell=0}^\infty\sum_{k=1}^{2\ell+1}\frac{\widehat{f}_{\ell,k}\widehat{g}_{\ell,k}}{\widehat{\phi}_\ell}.
\end{equation*}
This space is a so-called reproducing kernel Hilbert space (RKHS), which on the one hand means that $\phi(\cdot,\xx)\in\cN_{\phi}$ and on the other hand that $f(\xx)=\langle f,\phi(\cdot,\xx)\rangle_\phi$ for all $f\in\cN_{\phi}$ and $\xx\in\SSS^2$.
It is a well known fact that $\cN_{\phi}=H^\sigma$ if there exists a constant $C>0$ such that $\widehat{\phi}_\ell$ yield a lower and upper bound of the form
\begin{equation}\label{2::equation::bound_for_native_space}
C^{-1}(1+\lambda_\ell)^{-\sigma}
\leq \widehat{\phi}_\ell
\leq C(1+\lambda_\ell)^{-\sigma}, \quad \ell\in\N_0.
\end{equation}
In this is case, the $H^\sigma$-norm and the $\cN_{\phi}$-norm are equivalent.

Spherical basis functions can easily be constructed out of radial basis functions, since they are zonal functions restricted to the sphere, see \cite{fuselier2009,Ward2007}. 
Suitable choices for RBFs are for example the Gaussian or the Wendland radial basis functions, see \cite{wendland1995}. 
Here, we will use the latter one for the construction of our numerical scheme. 
The advantage of these functions is that they can be constructed of arbitrary smoothness and such that they satisfy \eqref{2::equation::bound_for_native_space} for a given $\sigma>1$, which means that their native space is isomorphic to the Sobolev space, see \cite{Narcowich2002}.
Moreover, they are piecewise polynomials with compact support, which makes them efficient to calculate. 
Examples of the Wendland functions of various smoothness which are suitable for the sphere are given in Table \ref{2::table::1}. See also \cite{wendland2004} for more details.

\begin{center}
\begin{table}[t]
\begin{tabular}{lc}
\hline 
Function & $\sigma$\vspace*{1pt} \\ 
\hline \vspace*{1pt}
$\phi_1(r)\doteq(1-r)_+^4(4r+1)$ & $5/2$\vspace*{1pt}  \\ 
$\phi_2(r)\doteq(1-r)_+^6(35r^2+18r+3)$ & $7/2$\vspace*{1pt}  \\ 
$\phi_3(r)\doteq(1-r)_+^8(32r^3+25r^2+8r+1)$ & $9/2$\vspace*{1pt}  \\ 
$\phi_4(r)\doteq(1-r)_+^{10}(429r^4+450r^3+210r^2+50r+5)$ & $11/2$\vspace*{1pt}  \\ 
\hline \hline 
\end{tabular}
\caption{Compactly supported radial basis functions of various smoothness.} 
\label{2::table::1}
\end{table}
\end{center}

\subsection{Kernel-based interpolation for tangential vector spaces}
Positive definite kernels on the sphere which yield interpolants for divergence-free vector-valued functions where introduced in \cite{Ward2007}. 
In \cite{fuselier2009}, positive definite kernels  which yield interpolants for rotation free vector fields were introduced and an error analysis was established. 
Both, the divergence-free as well as the rotation free kernel can be constructed from the scalar-valued kernel $\phi$.

Assuming that $\sum_{\ell=0}^\infty(2\ell+1)\lambda_\ell\widehat{\phi}_\ell<\infty$, the kernel which yields divergence-free interpolants is given by the matrix-valued function
\begin{equation}\label{2::eq::Phi_div_long}
\Phi_\dive(\xx,\yy):=\LL^*_\xx(\LL^*_\yy)^T\phi(\xx,\yy)=\sum_{\ell=1}^\infty\lambda_\ell\widehat{\phi}(\ell)\sum_{k=1}^{2\ell+1}\yy_{\ell,k}(\xx)\yy_{\ell,k}^T(\yy),
\end{equation}
where $\LL^*_\xx$ and $(\LL^*_\yy)^T$ act on the $\xx$ and $\yy$ variables, respectively. It is a well known fact that if $\widehat{\phi}(\ell)>0$ for all $\ell\geq 1$ then $\Phi_\dive$ is positive definite, which means that given a set of pairwise distinct points $\cX=\{\xx_1,\ldots ,\xx_N\}\subset \SSS^{2}$ and an arbitrary set of tangent vectors
$\aalpha_j\in T_{\xx_j}(\SSS^2)\setminus\{\boldsymbol{0}\}$, we have
\begin{equation*}
\sum_{j,k=1}^N\aalpha_j^T\Phi_\dive(\xx_j,\xx_k)\aalpha_k>0.
\end{equation*}
Its curl-free counterpart is given by the matrix-valued kernel
\begin{equation}\label{2::eq::Phi_curl_long}
\Phi_\curl(\xx,\yy):=\nabla_{\xx}^*(\nabla_{\yy}^{*})^T\phi(\xx,\yy)=\sum_{\ell=1}^\infty\lambda_\ell\widehat{\phi}(\ell)\sum_{k=1}^{2\ell+1}\zz_{\ell,k}(\xx)\zz_{\ell,k}^T(\yy).
\end{equation}
Again, if $\widehat{\phi}(\ell)>0$ for all $\ell\geq 1$, then $\Phi_\curl$ is positive definite.
The native space for both kernels $\Phi_\dive$ and $\Phi_\curl$ are given by
\begin{equation*}
\begin{split}
\cN_{\Phi_\dive}
&=
\!\!\left\{\uu\in\LL^2\,\Big\vert\, 
\sum_{\ell=1}^\infty\sum_{k=1}^{2\ell+1}\frac{(\widehat{\uu}^\dive_{\ell,k})^2}{\lambda_\ell\widehat{\phi}(\ell)}
<\infty\right\}, 
\\
\!\cN_{\Phi_\curl}
&=
\!\!\left\{\uu\in\LL^2\,\Big\vert\, 
\sum_{\ell=1}^\infty\sum_{k=1}^{2\ell+1}\frac{(\widehat{\uu}^\curl_{\ell,k})^2}{\lambda_\ell\widehat{\phi}(\ell)}
<\infty\right\},
\end{split}
\end{equation*}
where the norms are induced by the inner products, which are given by
\begin{equation*}
\langle\uu,\vv\rangle_{\Phi_\dive}\!=\!\sum_{\ell=1}^\infty  \sum_{k=1}^{2\ell+1}\frac{1}{\lambda_\ell\widehat{\phi}(\ell)}\widehat{\uu}^\dive_{\ell,k}\widehat{\vv}^\dive_{\ell,k},
\quad\!
\langle\uu,\vv\rangle_{\Phi_\curl}\!=\!\sum_{\ell=1}^\infty  \sum_{k=1}^{2\ell+1}\frac{1}{\lambda_\ell\widehat{\phi}(\ell)}\widehat{\uu}^\curl_{\ell,k}\widehat{\vv}^\curl_{\ell,k}.
\end{equation*}
Both spaces are again RKHS with reproducing kernels $\Phi_\dive$ and $\Phi_\curl$, respectively, which means in the case of $\Phi_\dive$ that for every $\uu\in\cN_{\Phi_\dive}$, $\xx\in\SSS^2$ and $\aalpha\in T_\xx(\SSS^2)$ we have $\Phi_\dive(\cdot,\xx)\aalpha\in\cN_{\Phi_\dive}$ on the one hand and $\aalpha^T\uu(\xx)=\langle \uu,\Phi_\dive(\cdot,\xx)\aalpha\rangle_{\Phi_\dive}$ on the other hand \cite{Fuselier2009_2}.
If the Fourier coefficients behave like $\widehat{\phi}_\ell\sim (1+\lambda_\ell)^{-(\sigma+1)}$ for a $\sigma>0$, then $\cN_{\Phi_\dive}=\HH^{\sigma}_\dive$ and $\cN_{\Phi_\curl}=\HH^{\sigma}_\curl$ with equivalent norms, which means there is a constant $C>0$ with
\begin{equation*}
\begin{split}
C^{-1}\|\uu\|_{\HH^{\sigma}_\dive}
&\leq \|\uu\|_{\Phi_\dive}
\leq C\|\uu\|_{\HH^{\sigma}_\dive},\quad 
\uu\in\HH^{\sigma}_\dive,\\
C^{-1}\|\uu\|_{\HH^{\sigma}_\curl}
&\leq \|\uu\|_{\Phi_\curl}
\leq C\|\uu\|_{\HH^{\sigma}_\curl},\quad 
\uu\in\HH^{\sigma}_\curl.
\end{split}
\end{equation*}
Finally, the kernel $\Phi=\Phi_\dive+\Phi_\curl$ is also positive definite if $\Phi_\dive$ and $\Phi_\curl$ are positive definite.
The native space $\cN_{\Phi}$ of $\Phi$ is then given by the direct sum of the native spaces $\cN_{\Phi_\dive}$ and $\cN_{\Phi_\curl}$, i.e. $\cN_{\Phi}=\cN_{\Phi_\dive}\oplus\cN_{\Phi_\curl}$. Hence,  $\cN_{\Phi}$ can be identified with $\HH^{\sigma}$ and their norms are equivalent. 
The exact representation for $\Phi_\dive$ and $\Phi_\curl$ defined by a zonal kernel $\phi$ can be found in \cite{fuselier2009}.

\subsection{Interpolation with Kernel Functions}\label{2.3}
From now on, we assume that we have a positive definite matrix-valued kernel $\Phi=\Phi_\dive+\Phi_\curl$ with $\widehat{\phi}_\ell\sim (1+\lambda_\ell)^{-(\sigma+1)}$ for a $\sigma>0$.
In the following, we define a discrete set $\cX=\{\xx_1,\ldots ,\xx_N\}\subset \SSS^{2}$ of $N\in\N$ pairwise distinct data points with fill distance
\begin{equation*}
h_\cX:=\sup_{\xx\in\SSS^2}\min_{\xx_j\in\cX}d(\xx,\xx_j),
\end{equation*}
where $d(\xx,\yy)$ is the geodesic distance between $\xx,\yy\in\SSS^2$.
Furthermore, we define our approximation space
\begin{equation*}
\bcF_\Phi(\cX):=\operatorname{span}\{\Phi(\cdot,\xx_j)\aalpha_j\:\vert\:\xx_j\in\cX,\,\aalpha_j\in T_{\xx_j}(\SSS^2),\,1\leq j\leq N\}.
\end{equation*}
The interpolation operator, which maps a function $\uu\in\HH^\sigma$ to its best approximation in $\bcF_\Phi(\cX)$, is given by
\begin{equation*}
\cI_{\Phi}:\HH^\sigma\to \bcF_\Phi(\cX),\quad \uu\mapsto \sss_{\cX,\uu}=\sum_{j=1}^N\Phi(\cdot,\xx_j)\aalpha_j,
\end{equation*}
where the coefficients $\aalpha_j\in T_{\xx_j}(\SSS^2)$ are uniquely determined by $\sss_{\cX,\uu}(\xx_j)=\uu(\xx_j)$ for all $1\leq j\leq N$.

Since the kernel $\Phi$ is just the sum of its divergence-free and its curl-free part, we also have a simple approximation for the divergence-free and the curl-free part of $\uu$. Note that the operator $\cI_{\Phi}$  as well as the following operators depend on the data sites $\cX$. However, since we do not vary the data sites, we will omit this dependency due to readability.

Projecting an interpolant onto the space of divergence-free functions, we can use that $\cP\Phi(\xx_j,\cdot)\aalpha_j=\Phi_\dive(\cdot,\xx_j)\aalpha_j$. Hence, we define the interpolation operator of the divergence-free part, which can also be seen as an approximated Leray operator, by
\begin{equation}\label{2.3::Leray_approx}
\cP_{\Phi}:\HH^\sigma\to \bcF_{\Phi_\dive}(\cX),\quad \uu\mapsto \sum_{j=1}^N\Phi_\dive(\cdot,\xx_j)\aalpha_j.
\end{equation}
In the same way, we can project an interpolant on its curl-free part by using the operator
\begin{equation*}\label{2.3::inv_Leray_approx}
(\operatorname{id}-\cP) \cI_{\Phi}:\HH^\sigma\to \bcF_{\Phi_\curl}(\cX),\quad \uu\mapsto \sum_{j=1}^N\Phi_\curl(\cdot,\xx_j)\aalpha_j.
\end{equation*}
From \cite{fuselier2009}, we will use the following error estimates for the three interpolation operators, which are given in terms of the fill distance $h_\cX$.
\begin{theorem}\label{2::Theorem::1}
Let $\sigma>3/2$ and $\uu\in\HH^{\sigma}$.
Then, there exists a $C>0$ such that for all $\tau<\sigma$
\begin{equation*}
\|\uu-\cI_{\Phi}\uu\|_{\HH^{\tau}}\leq C h_\cX^{\sigma-\tau}\|\uu\|_{\HH^\sigma}.
\end{equation*}
In particular, there also exist constants $C_1,C_2>0$ such that 
\begin{equation*}
\begin{split}
\| \cP\uu- \cP_{\Phi} \uu \|_{\HH^\tau} 
&\leq C_1 h_\cX^{\sigma-\tau} \|\uu\|_{\HH^\sigma}\\
\| (\id-\cP)\uu- (\id-\cP_{\Phi}) \uu \|_{\HH^\tau} 
&\leq C_2 h_\cX^{\sigma-\tau} \|\uu\|_{\HH^\sigma}
\end{split}
\end{equation*}
These estimates also hold in the $\HH^\sigma_\dive$ and $\HH^\sigma_\curl$ norms. 
\end{theorem}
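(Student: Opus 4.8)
My plan is to derive the estimate for the full interpolation operator $\cI_\Phi$ from two ingredients --- stability of the interpolant in the native norm and a ``zeros'' (sampling) inequality for functions vanishing on $\cX$ --- and then to obtain the two projected estimates from it at essentially no extra cost. For the estimate on $\cI_\Phi$ I would first note that $\cI_\Phi\uu$ is the $\langle\cdot,\cdot\rangle_\Phi$-orthogonal projection of $\uu$ onto $\bcF_\Phi(\cX)$: by the reproducing property of $\cN_\Phi$ and the interpolation conditions, $\langle\uu-\cI_\Phi\uu,\Phi(\cdot,\xx_j)\bbeta\rangle_\Phi=\bbeta^T\big(\uu(\xx_j)-(\cI_\Phi\uu)(\xx_j)\big)=0$ for every $\xx_j\in\cX$ and $\bbeta\in T_{\xx_j}(\SSS^2)$. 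Hence $\|\cI_\Phi\uu\|_\Phi\le\|\uu\|_\Phi$, and since $\widehat{\phi}_\ell\sim(1+\lambda_\ell)^{-(\sigma+1)}$ makes $\|\cdot\|_\Phi$ equivalent to $\|\cdot\|_{\HH^\sigma}$, this yields the stability bound $\|\uu-\cI_\Phi\uu\|_{\HH^\sigma}\le C\|\uu\|_{\HH^\sigma}$. The error $\ww:=\uu-\cI_\Phi\uu$ vanishes on all of $\cX$, so it remains to invoke the sampling inequality for tangential vector fields,
\begin{equation*}
\|\ww\|_{\HH^\tau}\le C\,h_\cX^{\sigma-\tau}\,\|\ww\|_{\HH^\sigma},\qquad\tau<\sigma,
\end{equation*}
valid for $\ww\in\HH^\sigma$ with $\sigma>3/2$ and $\ww|_\cX=0$ (the threshold $\sigma>3/2$ being what keeps point evaluation bounded on $\HH^\sigma$ and makes the local estimates below applicable). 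Multiplying the two bounds gives $\|\uu-\cI_\Phi\uu\|_{\HH^\tau}\le C\,h_\cX^{\sigma-\tau}\|\uu\|_{\HH^\sigma}$; running the same argument with $\Phi_\dive$, resp. $\Phi_\curl$, in place of $\Phi$ --- so that the relevant native space is $\HH^\sigma_\dive$, resp. $\HH^\sigma_\curl$ --- produces the variants in those norms.

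The sampling inequality is the one genuinely delicate point, and it is precisely what the error analysis of \cite{fuselier2009} supplies, so in the write-up the first estimate may simply be quoted from there. If one wanted to reprove it, the scalar version is classical: cover $\SSS^2$ by geodesic caps of radius a fixed multiple of $h_\cX$, each containing a node of $\cX$, pull each cap back to $\R^2$ by a normal chart, apply a Bramble--Hilbert / local-polynomial-reproduction estimate there using that the function vanishes at the enclosed node, and sum the local $H^\tau$-contributions. For tangential vector fields and $\tau\ge1$ the reduction to the scalar case is transparent: Lemma~\ref{6::lemma:2} gives $\|w_i\|_{H^\sigma}\le C\|\ww\|_{\HH^\sigma}$ for the components, each $w_i$ vanishes on $\cX$, and $\|\ww\|_{\HH^\tau}\le C(\|\nabla^*\cdot\ww\|_{H^{\tau-1}}+\|\LL^*\cdot\ww\|_{H^{\tau-1}})$ as in the proof of Lemma~\ref{6::lemma:4} then converts the scalar estimates back into a bound for $\ww$; the remaining range $0\le\tau<1$, where one leaves the componentwise picture, is handled in \cite{fuselier2009} by a localized argument for vector fields directly.

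The two projected estimates then follow immediately. By construction $\cP_\Phi=\cP\circ\cI_\Phi$, and the curl-free interpolation operator of Section~\ref{2.3} is $(\id-\cP)\circ\cI_\Phi$; since $\cP$ and $\id-\cP$ are $\HH^\tau$-orthogonal projections onto $\HH^\tau_\dive$ and $\HH^\tau_\curl$, they have operator norm at most $1$ on $\HH^\tau$ for every $\tau$. Because
\begin{equation*}
\cP\uu-\cP_\Phi\uu=\cP(\uu-\cI_\Phi\uu),\qquad (\id-\cP)\uu-(\id-\cP)\cI_\Phi\uu=(\id-\cP)(\uu-\cI_\Phi\uu),
\end{equation*}
both left-hand sides are bounded in $\HH^\tau$ by $\|\uu-\cI_\Phi\uu\|_{\HH^\tau}\le C\,h_\cX^{\sigma-\tau}\|\uu\|_{\HH^\sigma}$, which yields $C_1$ and $C_2$; the literal reading $(\id-\cP_\Phi)\uu:=\uu-\cP_\Phi\uu$ of the statement changes the second identity only by the interpolation error and leaves the bound intact, and replacing $\HH^\tau$ by $\HH^\tau_\dive$ or $\HH^\tau_\curl$ everywhere gives the remaining assertions. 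Thus the whole theorem reduces to the vector-valued sampling inequality of the middle step --- the one place that requires real work --- everything else being bookkeeping.
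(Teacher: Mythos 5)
Your proposal is essentially sound, but note that the paper does not prove this theorem at all: it is quoted verbatim from \cite{fuselier2009}, so there is no internal proof to compare against. What you have written is a correct reconstruction of the standard argument behind such estimates, and it is in substance the route taken in the cited reference: native-space orthogonality of the interpolant (reproducing property plus the interpolation conditions) gives stability $\|\uu-\cI_\Phi\uu\|_{\HH^\sigma}\leq C\|\uu\|_{\HH^\sigma}$ via the norm equivalence $\cN_\Phi\cong\HH^\sigma$, a zeros/sampling inequality for tangential fields vanishing on $\cX$ converts this into the rate $h_\cX^{\sigma-\tau}$, and the projected estimates follow because $\cP_\Phi=\cP\,\cI_\Phi$ and $\cP$, $\id-\cP$ have norm at most one on every $\HH^\tau$ (they simply discard Fourier blocks). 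Your handling of the ambiguity in the second projected estimate and of the $\HH^\sigma_\dive$/$\HH^\sigma_\curl$ variants is fine. The only place where real analytic work is hidden is the vector-valued sampling inequality, in particular the range $0\leq\tau<1$ where the componentwise reduction via Lemma \ref{6::lemma:2} and the div/curl bound no longer applies; you correctly identify this and defer it to \cite{fuselier2009}, which is exactly what the paper itself does for the whole theorem, so the level of rigor matches the source.
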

\subsection{Collocation methods for Helmholtz Equation}
For the convergence result, we will need an approximation method for the Helmholtz equation.
For a given right hand side $ \ff:\SSS^{2}\to\R^3$ the vectorial Helmholtz equation is given by
\begin{equation}\label{eq::Helmholtz}
-\DDelta^*\uu+\uu=\ff,\quad\text{on } \SSS^{2}.
\end{equation}
Its solution can easily be obtained via Fourier analysis. 
Every right hand side $\ff$ with zero divergence admits a divergence-free solution $\uu$.

Let $\widehat{\phi}_\ell\sim (1+\lambda_\ell)^{-(\sigma+1)}$ with $\sigma>5/2$ and $\sss_{\cX,\uu}=\sum_{j=1}^N\Phi_\dive(\cdot,\xx_j)\aalpha_j\in \bcF_{\Phi_\dive}$. 
We want to find the coefficient-vectors $\aalpha_j\in T_{\xx_j}(\SSS^2)$, $1\leq j\leq N$, such that $\sss_{\cX,\uu}$ is an approximation in $\bcF_{\Phi_\dive}$ to $\uu\in\HH^{\sigma+1}_\dive\subset\HH^\sigma_\dive$ with the property that $(-\DDelta^*+\operatorname{id})\sss_{\cX,\uu}(\xx_j)=(-\DDelta^*+\operatorname{id})\uu(\xx_j)$ for every $\xx_j\in \cX$.

Using \eqref{2::eq::Phi_div_long} and the fact that $\DDelta^*\yy_{\ell,m}=-\lambda_\ell\yy_{\ell,m}$, we easily see that
\begin{equation*}
\left(-\DDelta^*_\xx+\operatorname{id}_\xx\right)\Phi_\dive(\xx,\yy)
=
\sum_{\ell=1}^\infty\lambda_\ell(1+\lambda_\ell)\widehat{\phi}(\ell)\sum_{k=1}^{2\ell+1}\yy_{\ell,m}(\xx)\yy_{\ell,m}^T(\yy).
\end{equation*}
If $\Phi_\dive$ is positive definite, so is $\left(-\DDelta^*+\operatorname{id}\right)\Phi_\dive$, and we can find uniquely determined $\aalpha_j\in  T_{\xx_j}(\SSS^2)$, $1\leq j\leq N$, with
\begin{equation}\label{2::equation::Ritz::1}
\left(-\DDelta^*+\operatorname{id}\right)\sss_{\cX,\uu}(\xx_j)=\ff(\xx_j)
=\left(-\DDelta^*+\operatorname{id}\right)\uu(\xx_j)
,\quad \xx_j\in\cX.
\end{equation} 
The operator
\begin{equation*}
\cR_{\Phi_\dive}:\HH^{\sigma+1}_\dive\to \bcF_{\Phi_\dive},\quad
 \uu\to \sss_{\cX,\uu},
\end{equation*}
 which gives the approximation $\sss_{\cX,\uu}$ to the solution $\uu$ of the Helmholtz-equation, is called Ritz-projection operator.
\begin{theorem}\label{approximation::theroem::2}
Let $\sigma>5/2$. If $\uu\in \HH^{\sigma+1}_\dive$ then there exists exactly one approximation $\sss_{\cX,\uu}=\cR_{\Phi_\dive}\uu\in \bcF_{\Phi_\dive}$ which satisfies $(-\DDelta^*+\operatorname{id})\sss_{\cX,\uu}(\xx_j)=(-\DDelta^*+\operatorname{id})\uu(\xx_j)$ for every $\xx_j\in \cX$.
Moreover, there exists a constant $C>0$ such that
\begin{equation*}
\|\uu-\cR_{\Phi_\dive}\uu\|_{\HH^{\tau_1}}\leq C h_\cX^{\tau_2-\tau_1}\|\uu\|_{\HH^\tau_2} ,\quad \uu\in \HH^{\tau_2}_\dive
\end{equation*}
for all $5/2\leq \tau_1\leq\sigma+1\leq\tau_2\leq 2\sigma$.
\end{theorem}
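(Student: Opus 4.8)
The plan is to recast the collocation problem for the Helmholtz operator as a plain interpolation problem for a modified matrix-valued kernel and then to invoke the interpolation error estimate of Theorem~\ref{2::Theorem::1}. Set $\LL:=-\DDelta^*+\id$. On tangential divergence-free fields $\LL$ acts diagonally in the vector spherical harmonic basis, $\widehat{(\LL\uu)}^\dive_{\ell,k}=(1+\lambda_\ell)\widehat{\uu}^\dive_{\ell,k}$, so $\LL:\HH^{t}_\dive\to\HH^{t-2}_\dive$ is an isometric isomorphism for every $t\in\R$, with $\|\LL\uu\|_{\HH^{t-2}_\dive}=\|\uu\|_{\HH^{t}_\dive}$. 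Define the kernel $\Psi_\dive(\xx,\yy):=\LL_\xx\Phi_\dive(\xx,\yy)$; since the $\yy_{\ell,k}$ are eigenfunctions of $\DDelta^*$ in either argument this coincides with $\LL_\yy\Phi_\dive(\xx,\yy)$, and the computation recorded just before \eqref{2::equation::Ritz::1} gives
\[
\Psi_\dive(\xx,\yy)=\sum_{\ell=1}^\infty\lambda_\ell(1+\lambda_\ell)\widehat{\phi}(\ell)\sum_{k=1}^{2\ell+1}\yy_{\ell,k}(\xx)\yy_{\ell,k}^T(\yy).
\]
Because $\widehat{\phi}(\ell)>0$ and $\widehat{\phi}_\ell\sim(1+\lambda_\ell)^{-(\sigma+1)}$, the coefficients $\lambda_\ell(1+\lambda_\ell)\widehat{\phi}(\ell)$ are positive and satisfy $\lambda_\ell(1+\lambda_\ell)\widehat{\phi}(\ell)\sim(1+\lambda_\ell)^{-(\sigma-1)}$, so $\Psi_\dive$ is positive definite with $\cN_{\Psi_\dive}=\HH^{\sigma-1}_\dive$ and equivalent norms; the hypothesis $\sigma>5/2$ is exactly what guarantees $\sigma-1>3/2$, the regularity under which Theorem~\ref{2::Theorem::1} applies to $\Psi_\dive$.

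Existence and uniqueness of $\cR_{\Phi_\dive}\uu$ are then the observation already made before the theorem: inserting $\sss_{\cX,\uu}=\sum_{k=1}^N\Phi_\dive(\cdot,\xx_k)\aalpha_k$ into \eqref{2::equation::Ritz::1} turns that system into $\sum_{k=1}^N\Psi_\dive(\xx_j,\xx_k)\aalpha_k=(\LL\uu)(\xx_j)$, $1\leq j\leq N$, whose block matrix is the interpolation matrix of the positive definite kernel $\Psi_\dive$ at the pairwise distinct nodes $\cX$, hence invertible on $\prod_{j=1}^N T_{\xx_j}(\SSS^2)$; the right-hand side lies in that product space since $\LL\uu$ is again tangential and divergence-free. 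This produces unique coefficients $\aalpha_j\in T_{\xx_j}(\SSS^2)$ and, read in the other direction, shows that $\LL\sss_{\cX,\uu}=\sum_{k}\Psi_\dive(\cdot,\xx_k)\aalpha_k$ is precisely the $\Psi_\dive$-interpolant of $\LL\uu$. In other words,
\[
\LL\,\cR_{\Phi_\dive}\uu=\cI_{\Psi_\dive}(\LL\uu),\qquad\text{i.e.}\qquad\cR_{\Phi_\dive}\uu=\LL^{-1}\cI_{\Psi_\dive}(\LL\uu).
\]

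With this identity the error estimate is a short computation. For $\uu\in\HH^{\tau_2}_\dive$ the isometry property of $\LL$ gives
\[
\|\uu-\cR_{\Phi_\dive}\uu\|_{\HH^{\tau_1}_\dive}=\big\|\LL^{-1}\big(\LL\uu-\cI_{\Psi_\dive}(\LL\uu)\big)\big\|_{\HH^{\tau_1}_\dive}=\big\|\LL\uu-\cI_{\Psi_\dive}(\LL\uu)\big\|_{\HH^{\tau_1-2}_\dive},
\]
and Theorem~\ref{2::Theorem::1}, applied to the kernel $\Psi_\dive$ with native space $\HH^{\sigma-1}_\dive$ and to the function $\LL\uu\in\HH^{\tau_2-2}_\dive$, bounds this by $C\,h_\cX^{(\tau_2-2)-(\tau_1-2)}\|\LL\uu\|_{\HH^{\tau_2-2}_\dive}=C\,h_\cX^{\tau_2-\tau_1}\|\uu\|_{\HH^{\tau_2}_\dive}$. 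Subtracting $2$, the hypotheses $5/2\leq\tau_1\leq\sigma+1\leq\tau_2\leq 2\sigma$ become $\tfrac12\leq\tau_1-2\leq\sigma-1$ and $\sigma-1\leq\tau_2-2\leq2(\sigma-1)$, which is exactly the admissible index range for that estimate (target smoothness at most the native-space smoothness $\sigma-1$, source smoothness between $\sigma-1$ and $2(\sigma-1)$). The one step carrying real content is this last appeal: reaching $\tau_2$ up to $2\sigma$ needs the form of the interpolation bound in which the interpolated function may be smoother than the native space --- the ``escaping the native space'' phenomenon that roughly doubles the attainable order --- which is part of the general theory of \cite{fuselier2009} underlying Theorem~\ref{2::Theorem::1}. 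The reduction via $\LL$, the positive definiteness of $\Psi_\dive$, and the index bookkeeping are routine.
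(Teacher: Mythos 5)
Your argument is essentially the paper's own proof: both recast the collocation conditions as plain interpolation of $\LL\uu=(-\DDelta^*+\id)\uu$ with the modified kernel $\LL_\xx\Phi_\dive$, get uniqueness from positive definiteness of that kernel, use the isometry $\|\LL\uu\|_{\HH^{t-2}_\dive}=\|\uu\|_{\HH^{t}_\dive}$, and conclude with the interpolation estimate of Theorem \ref{2::Theorem::1} applied to $\LL\uu$. If anything, you are more explicit than the paper about the native space $\HH^{\sigma-1}_\dive$ of the modified kernel and about the fact that pushing $\tau_2$ up to $2\sigma$ uses the ``escaping the native space'' form of the interpolation bound from \cite{fuselier2009}, which the paper subsumes silently in its citation of Theorem \ref{2::Theorem::1}.
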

\begin{proof}
As we can easily verify, if $\ff\in \HH^{\sigma}_\dive$ and $\uu$ is a solution of \eqref{eq::Helmholtz}, then $\uu\in\HH^{\sigma+2}_\dive$ and we have
\begin{equation*}
\|\uu\|_{\HH^{\sigma+2}_\dive}=\|\ff\|_{\HH^{\sigma}_\dive}.
\end{equation*}
Hence , we can apply Theorem \ref{2::Theorem::1}, which yields
\begin{equation*}
\begin{split}
\|\uu-\cR_{\Phi_\dive}\uu\|_{\HH^{\tau_1}}
&=
\|\ff-\cI_{(-\DDelta^*+\operatorname{id})\Phi_\dive,\cX}\ff\|_{\HH^{\tau_1-2}}\\
&\leq
C h^{(\tau_2-2)-(\tau_1-2)}\|\ff\|_{\HH^{\tau_2-2}}\\
&=
C h^{\tau_2-\tau_1}\|\uu\|_{\HH^{\tau_2}}.
\end{split}
\end{equation*}
The uniquiness of the solution comes from the fact that the kernel $\left(-\DDelta^*_\xx+\operatorname{id}_\xx\right)\Phi_\dive$ is positive definite.
\end{proof}

\section{The Navier-Stokes equations on the Sphere and auxiliary Results}
As mentioned in the introduction, we want to solve the incompressible Navier-Stokes equations on the rotating sphere \eqref{Introduction::NS::1} - \eqref{Introduction::NS::3}.
We do this by applying the Leary operator $\cP$ to equation \eqref{Introduction::NS::1}. 
By \eqref{Introduction::NS::2}, the velocity itself is divergence-free, so the time derivative of the velocity and the diffusion term are not affected, but the gradient of the pressure vanishes. We arrive at
\begin{align}\label{equation::NS_divfree::1}
\partial_t\uu-\nu\Delta_*\uu&=\cP \left(\ff-\cB(\uu,\uu)-\cC(\uu)\right)&&\text{in } \SSS^{2}\times(0,T),\\
\label{equation::NS_divfree::2}
\uu(\cdot,0)&=\uu_0&&\text{on } \SSS^{2}.
\end{align}
Hence, the velocity can be calculated by finding a solution of \eqref{equation::NS_divfree::1} - \eqref{equation::NS_divfree::2}.
On the other hand, applying the operator $\cI-\cP$ to \eqref{Introduction::NS::1} yields the equation
\begin{equation}\label{equation::NS_curlfree}
\nabla_* p=(id-\cP) \left(\ff-\cB(\uu,\uu)-\cC(\uu)\right)\quad\text{in }\SSS^{2}\times(0,T),
\end{equation} 
from which the pressure can be calculated. 
Since the pressure can only be calculated up to a constant, we will assume the pressure to have mean zero integral, i.e. $\int_{\SSS^2}p(\xx)dS(\xx)=0$. 
The pair $(\uu,p)$ which will be given by the solutions of \eqref{equation::NS_divfree::1} and \eqref{equation::NS_curlfree} is the a solution of the Navier-Stokes equations \eqref{Introduction::NS::1} - \eqref{Introduction::NS::3}.

The existence and uniqueness of the solution of the divergence-free part of the Navier-Stokes equations has been discussed in \cite{Il1988,Il1990,Il1993}. 
However, the results are often based on the weak formulation and do not yield smooth solutions.
Since we need high-order regularity of our solution, we will use the result of \cite{Titi1999}, which is based on the Gevrey regularity. The following lemma is a simple deduction of their existence and uniqueness result.
\begin{lemma}\label{3::Lemma::existence}
Let $\sigma>0$. Assume that $\uu_0\in\HH^{2\sigma+1}_\dive$ and $\ff\in L^\infty(0,\infty;\LL^2_\dive)$ such that 
$$\sum_{\ell=1}^\infty\sum_{k=1}^{2\ell+1}\lambda_\ell^{2\sigma}e^{\tau\lambda_\ell^{1/2}}(\widehat{\ff(\cdot,t)}_{\ell,k}^\dive)^2<\infty$$ 
for some $\tau>0$ and all $t\in(0,\infty)$. Then, \eqref{equation::NS_divfree::1} - \eqref{equation::NS_divfree::2} has a unique global solution $u\in L^\infty(0,\infty;\HH^{2\sigma+1}_\dive)$.
\end{lemma}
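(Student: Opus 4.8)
The statement is meant to be a translation, into the present notation, of the Gevrey-regularity theory of \cite{Titi1999}, followed by reading off a uniform-in-time Sobolev bound. Write $A:=-\DDelta^*$ for the Stokes operator on $\LL^2_\dive$; it is positive and self-adjoint with eigenpairs $(\lambda_\ell,\yy_{\ell,k})$, so the operators $A^{s/2}$ and $e^{\tau A^{1/2}}$ act diagonally on the divergence-free Fourier series, multiplying the $(\ell,k)$-mode by $\lambda_\ell^{s/2}$ and by $e^{\tau\lambda_\ell^{1/2}}$, respectively. In these terms the hypothesis on $\ff$ says exactly that $A^{\sigma}e^{(\tau/2)A^{1/2}}\ff(t)\in\LL^2_\dive$ for every $t$, i.e. $\ff(t)$ lies in a Gevrey class of radius $\tau/2$ with an extra $A^{\sigma}$-weight; together with $\ff\in L^\infty(0,\infty;\LL^2_\dive)$ this is (up to shrinking $\tau$, and taking the bound uniform in $t$) precisely the forcing assumption used in \cite{Titi1999}, and $\uu_0\in\HH^{2\sigma+1}_\dive$ is $\uu_0\in D(A^{(2\sigma+1)/2})$. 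I would also record at the outset that the Coriolis term $\cC(\uu)=l\,\nn\times\uu$ is pointwise orthogonal to $\uu$, so $\langle\cC(\uu),\uu\rangle_{\LL^2}=0$, while $\langle\cP\cB(\uu,\uu),\uu\rangle_{\LL^2}=0$ by the divergence-free property; hence neither the convection nor the Coriolis term disturbs the energy balance, and the two-dimensional dissipative machinery of \cite{Titi1999} applies with only the constants changed by the (bounded, skew) Coriolis term.

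Next I would invoke \cite{Titi1999}: under these hypotheses there is a unique global strong solution $\uu$ of \eqref{equation::NS_divfree::1}--\eqref{equation::NS_divfree::2}, together with an a priori estimate of Gevrey type, namely the existence of $\tau_0\in(0,\tau]$ and $M>0$ such that
\[
\sum_{\ell=1}^\infty\sum_{k=1}^{2\ell+1}\lambda_\ell^{2\sigma+1}\,e^{2\tau_0\min(t,1)\lambda_\ell^{1/2}}\,\bigl(\widehat{\uu(\cdot,t)}^\dive_{\ell,k}\bigr)^2\ \le\ M\qquad\text{for all }t>0 .
\]
The factor $\min(t,1)$ is the usual feature of this technique: the Gevrey radius grows linearly off $t=0$ and then saturates because the forcing is uniformly bounded, so the trajectory is absorbed into a bounded ball of the Gevrey class.

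From this the conclusion follows by an elementary comparison. For any fixed $s\ge0$ and $a>0$ one has $(1+\lambda_\ell)^{s}\le C_{s,a}\,e^{a\lambda_\ell^{1/2}}$ for all $\ell\ge1$, so applying this with $s=2\sigma+1$ and $a=2\tau_0$ to the displayed estimate gives, for $t\ge1$,
\[
\|\uu(t)\|_{\HH^{2\sigma+1}_\dive}^2\ \le\ C\sum_{\ell,k}\lambda_\ell^{2\sigma+1}e^{2\tau_0\lambda_\ell^{1/2}}\bigl(\widehat{\uu(\cdot,t)}^\dive_{\ell,k}\bigr)^2\ \le\ C M ,
\]
uniformly in $t\ge1$. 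For $t\in[0,1]$ I would argue separately: since $\uu_0\in\HH^{2\sigma+1}_\dive$ and $\ff$ is far smoother than required, propagation of regularity for the two-dimensional problem (again using that the Coriolis term is harmless) gives $\uu\in C([0,1];\HH^{2\sigma+1}_\dive)$, hence $\sup_{0\le t\le1}\|\uu(t)\|_{\HH^{2\sigma+1}_\dive}<\infty$. Combining the two regimes yields $\uu\in L^\infty(0,\infty;\HH^{2\sigma+1}_\dive)$, and uniqueness is already part of the cited result.

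I expect the only real work to be bookkeeping rather than estimation: first, checking that the Fourier-side condition on $\ff$ matches verbatim the Gevrey forcing hypothesis of \cite{Titi1999} (including making the bound uniform in $t$, which is implicit in the present formulation); and second, bridging the degeneration of the Gevrey radius as $t\downarrow0$, for which the assumption $\uu_0\in\HH^{2\sigma+1}_\dive$ — rather than merely $\uu_0\in\HH^1_\dive$ — is exactly what is needed. No new a priori estimate is required beyond what \cite{Titi1999} supplies together with standard local parabolic regularity, which is why the lemma is a direct deduction.
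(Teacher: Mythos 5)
The paper offers no proof of this lemma at all: it is presented as "a simple deduction" of the Gevrey-regularity existence and uniqueness theorem of \cite{Titi1999}, and your proposal carries out exactly that deduction — invoke the cited result and convert the resulting Gevrey-class bound into a uniform $\HH^{2\sigma+1}_\dive$ bound. Your additional bookkeeping (skew-symmetry of the Coriolis term, the saturation of the Gevrey radius in $t$, the separate treatment of $[0,1]$ using $\uu_0\in\HH^{2\sigma+1}_\dive$, and the observation that the uniform-in-$t$ bound on the forcing is only implicit in the lemma's formulation) is a faithful filling-in of what the paper leaves unstated, so this is essentially the same approach.
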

Hence, this result ensures that there exists a solution for the divergence-free part of the Navier-Stokes equations \eqref{equation::NS_divfree::1} - \eqref{equation::NS_divfree::2}. The existence of the solution of \eqref{equation::NS_curlfree} is simply given by the fact that the right hand side of \eqref{equation::NS_curlfree} is curl-free.

Next we want to gain further information about the two operators $\cB$ and $\cC$.
As one can easily see, $\cB:\HH^\sigma\times\HH^{\sigma+1}\to\HH^\sigma$ is a bilinear operator. 
In the following lemma we will show that the operator is also bounded.
\begin{lemma}\label{auxiliary_results::lemma::1}
Let $\sigma>1$. Then, there exist a constant $C_1>0$, which only depends on $d$ and $\sigma$, such that
\begin{equation*}
\|\cB(\uu,\vv)\|_{\HH^\sigma}
\leq 
C_1\|\uu\|_{\HH^\sigma}\left(\sum_{j=1}^3\|\partial_j^*\vv\|_{\HH^{\sigma}}^2\right)^{1/2}
\leq
C_1\|\uu\|_{\HH^\sigma}\|\vv\|_{\HH^{\sigma+1}}, \quad ,
\end{equation*}
for all $\uu\in \HH^{\sigma}$ and $\vv\in \HH^{\sigma+1}$. Moreover, there exist a constant  $C_2>0$, which depends on $d$ and $\sigma$, such that
\begin{equation*}
\langle\cB(\uu,\vv),\ww\rangle_{\HH^\sigma}
\leq 
C_2\|\uu\|_{\HH^{\sigma-1}}\|\vv\|_{\HH^{\sigma}}\|\ww\|_{\HH^{\sigma+1}}.
\end{equation*}
for all $\uu\in\HH^{\sigma-1}$, $\vv\in \HH^{\sigma}$ and $\ww\in \HH^{\sigma+1}$.
\end{lemma}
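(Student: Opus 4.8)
The plan is to reduce both estimates to the scalar and scalar--times--vector product bounds of Lemmas \ref{6::lemma:2}--\ref{6::lemma:1}. Write the convection term as $\cB(\uu,\vv)=(\uu\cdot\nabla_*)\vv=\sum_{j=1}^3 u_j\,\partial_j^*\vv$, where $\partial_j^*\vv$ is the componentwise application of $\partial_j^*$ to $\vv$. Strictly speaking $\partial_j^*\vv$ need not be tangential, so the products $u_j\,\partial_j^*\vv$ are to be read componentwise and recombined; equivalently, if one prefers the covariant (tangential) form of $\cB$, its normal correction is the derivative-free term $(\uu\cdot\vv)\nnu$, which is harmless by Lemma \ref{6::lemma:4} a). For part a) I would apply the triangle inequality, then Lemma \ref{6::lemma:4} b) (or, at the level of components, Lemma \ref{6::lemma:3}) to each summand $u_j\,\partial_j^*\vv$, together with Lemma \ref{6::lemma:2} to replace $\|u_j\|_{H^\sigma}$ by $\|\uu\|_{\HH^\sigma}$; this gives $\|\cB(\uu,\vv)\|_{\HH^\sigma}\le C\|\uu\|_{\HH^\sigma}\sum_{j=1}^3\|\partial_j^*\vv\|_{\HH^\sigma}$. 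A Cauchy--Schwarz step over the three indices $j$ yields the middle expression in the claim, and Lemma \ref{6::lemma:1}, which bounds $\sum_{j}\|\partial_j^*\vv\|_{\HH^\sigma}^2$ by $\|\vv\|_{\HH^{\sigma+1}}^2$, gives the final inequality; the finite factors $3$ and $\sqrt 3$ are absorbed into $C_1$.

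For part b) the idea is to transfer the single spare derivative onto the smoothest factor $\ww$. From the Fourier representation of the $\HH^\sigma$ inner product and Cauchy--Schwarz on the coefficient sequences one obtains the interpolated duality bound $\langle\aaa,\bb\rangle_{\HH^\sigma}\le\|\aaa\|_{\HH^{\sigma-1}}\|\bb\|_{\HH^{\sigma+1}}$; applying it with $\aaa=\cB(\uu,\vv)$ and $\bb=\ww$ gives $\langle\cB(\uu,\vv),\ww\rangle_{\HH^\sigma}\le\|\cB(\uu,\vv)\|_{\HH^{\sigma-1}}\|\ww\|_{\HH^{\sigma+1}}$. It then remains to establish $\|\cB(\uu,\vv)\|_{\HH^{\sigma-1}}\le C\|\uu\|_{\HH^{\sigma-1}}\|\vv\|_{\HH^{\sigma}}$, which is precisely the bound of part a) with the regularity index lowered by one (note $\vv\in\HH^{\sigma}=\HH^{(\sigma-1)+1}$), proved by running the same chain of Lemmas \ref{6::lemma:2}, \ref{6::lemma:4} b) and \ref{6::lemma:1} at level $\sigma-1$.

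I expect the main obstacle to be exactly this last reduction: moving the whole extra derivative onto $\ww$ forces the scalar--times--vector product estimate to be used with both factors at smoothness essentially $\sigma-1$, which is comfortable once $\sigma\ge 2$ but sits at the borderline of the multiplication theorem underlying Lemma \ref{6::lemma:4} b) when $1<\sigma\le 2$, so the distribution of the Sobolev exponents has to be organised carefully (this is where the hypothesis $\sigma>1$ really enters, through the Sobolev multiplier/embedding properties on the two-sphere). Everything else --- the bilinearity of $\cB$, the componentwise recombination of the estimates, and the bookkeeping of the finite constants --- is routine.
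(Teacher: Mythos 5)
Your proposal is correct and follows essentially the same route as the paper: triangle inequality plus the product lemmas (Lemma \ref{6::lemma:2}, Lemma \ref{6::lemma:3}/\ref{6::lemma:4}) and Cauchy--Schwarz, then Lemma \ref{6::lemma:1} for part a), and for part b) the duality bound $\langle\cB(\uu,\vv),\ww\rangle_{\HH^\sigma}\le\|\cB(\uu,\vv)\|_{\HH^{\sigma-1}}\|\ww\|_{\HH^{\sigma+1}}$ followed by part a) at the lowered index. Your closing caveat about the range $1<\sigma\le 2$ (where applying the product estimate at level $\sigma-1$ sits at the edge of Lemma \ref{6::lemma:3}) is a fair observation that the paper's own proof passes over silently, but it does not change the argument's structure.
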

\begin{proof}
For the first term, the triangle inequality, Lemma \ref{6::lemma:3} and the Cauchy-Schwarz inequality yield
\begin{equation*}
\|\cB(\uu,\vv)\|_{\HH^\sigma}
\leq\sum_{i=1}^3\|u_i\partial^*_i\vv\|_{\HH^\sigma}
\leq C\left(\sum_{i=1}^3\|u_i\|^2_{H^\sigma}\right)^{1/2}\left(\sum_{i=1}^3\|\partial^*_i\vv\|^2_{\HH^\sigma}\right)^{1/2}.
\end{equation*}
Using Lemma \ref{6::lemma:2} gives the first inequality of the first statement. The second one follows from Lemma \ref{6::lemma:1}.
For the second estimate, we use that 
\begin{equation*}
\langle\cB(\uu,\vv),\ww\rangle_{\HH^\sigma}\leq \|\cB(\uu,\vv)\|_{\HH^{\sigma-1}}\|\ww\|_{\HH^{\sigma+1}}.
\end{equation*}
Applying the first estimate finishes the proof.
\end{proof}
It is also easy to see that $\cC:\HH^\sigma\to\HH^\sigma$ is a bounded linear operator. This will be given in the following lemma.
\begin{lemma}\label{auxiliary_results::lemma::2}
Let $\sigma>1$ and $\uu\in \HH^\sigma$. Then, there exists a constant $C>0$, which only depends on $d$ and $\sigma$ such that
\begin{equation*}
\begin{split}
\|\cC(\uu)\|_{\HH^\sigma}&\leq 
C\Omega\|\uu\|_{\HH^\sigma}.
\end{split}
\end{equation*}
\end{lemma}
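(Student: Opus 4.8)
The plan is to write $\cC(\uu)=l\,(\nn\times\uu)$ as a pointwise product of the scalar function $l(\xx)=2\Omega x_3$ and the vector field $R\uu:=\nn\times\uu$, and then to estimate the two factors separately, the product being controlled by Lemma \ref{6::lemma:4}b). Since $\sigma>1$ is exactly the hypothesis of that lemma, this is the natural route.

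First I would record that on $\SSS^2$ the outward unit normal at $\xx$ is $\xx$ itself, so $R\uu(\xx)=\xx\times\uu(\xx)$; as the cross product is orthogonal to both arguments, $R\uu$ is again a tangential vector field, so $R$ maps $\LL^2$ (and, as shown next, each $\HH^\sigma$) into itself. The key step is to check that $R$ is an isometry on $\HH^\sigma$, which I would do by computing its action on the vector spherical harmonics. Using $\LL^*f(\xx)=\xx\times\nabla^*f(\xx)$ together with $\xx\times(\xx\times\nabla^*f)=-\nabla^*f$ (valid because $\nabla^*f$ is tangential and $\|\xx\|_2=1$), one obtains
\[
\xx\times\yy_{\ell,k}=-\zz_{\ell,k},\qquad \xx\times\zz_{\ell,k}=\yy_{\ell,k}.
\]
Thus $R$ interchanges the divergence-free and curl-free parts (up to a sign), fixing each index $(\ell,k)$ and hence the eigenvalue $\lambda_\ell$; it follows that $\|R\uu\|_{\HH^\sigma}=\|\uu\|_{\HH^\sigma}$ (and likewise $R$ exchanges the $\HH^\sigma_\dive$ and $\HH^\sigma_\curl$ norms).

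Next, since $x_3$ is a fixed constant multiple of the degree-one spherical harmonic $Y_{1,\cdot}$, it lies in $H^\sigma$ for every $\sigma$, with $\|l\|_{H^\sigma}=2\Omega\|x_3\|_{H^\sigma}=c(\sigma)\,\Omega$ for a constant $c(\sigma)$ depending only on $\sigma$. Applying Lemma \ref{6::lemma:4}b) to the product $l\cdot(R\uu)$ and then inserting the isometry identity gives
\[
\|\cC(\uu)\|_{\HH^\sigma}=\|l\,(R\uu)\|_{\HH^\sigma}\le C\,\|l\|_{H^\sigma}\,\|R\uu\|_{\HH^\sigma}=C\,c(\sigma)\,\Omega\,\|\uu\|_{\HH^\sigma},
\]
which is the asserted bound.

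There is no serious obstacle here: the only point requiring genuine care is the verification that $R\colon\uu\mapsto\nn\times\uu$ is an isometry on $\HH^\sigma$, i.e. the two cross-product identities for $\yy_{\ell,k}$ and $\zz_{\ell,k}$ above; once those are in hand, the estimate is an immediate consequence of the product rule in Lemma \ref{6::lemma:4} and the smoothness of the Coriolis coefficient $l$.
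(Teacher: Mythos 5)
Your proposal is correct and follows essentially the same route as the paper: factor $\cC(\uu)=2\Omega\,x_3\,(\nn\times\uu)$, control the product with Lemma \ref{6::lemma:4}b) and the smoothness of $x_3$, and show that $\uu\mapsto\nn\times\uu$ is an $\HH^\sigma$-isometry because it swaps the divergence-free and curl-free vector spherical harmonics at each fixed index $(\ell,k)$. The only difference is cosmetic: you verify the swap by the pointwise identities $\xx\times\yy_{\ell,k}=-\zz_{\ell,k}$, $\xx\times\zz_{\ell,k}=\yy_{\ell,k}$ (with the correct signs), while the paper argues via the $\LL^2$ inner product; either way the norms are unchanged and the bound follows.
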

\begin{proof}
Using Lemma \ref{6::lemma:4}, we have 
\begin{equation*}
\|\cC(\uu)\|_{\HH^\sigma}=2\Omega\|n_3\nn\times\uu\|_{\HH^\sigma}\leq 2\Omega\|n_3\|_{H^\sigma}\|\nn\times\uu\|_{\HH^\sigma}.
\end{equation*}
Moreover, using that $\nn\times \yy_{\ell,k}=\zz_{\ell,k}$ for $\ell\in\N$ and $1\leq k\leq 2\ell+1$, the divergence-free Fourier coefficients of $\nn\times \uu$ satisfy
\begin{equation*}
\begin{split}
(\widehat{\nn\times \uu})_{\ell,k}^\dive
=&
\int_{\SSS^2}\left(\nn\times \uu\right)(\nn)\cdot\yy_{\ell,k}(\xx)dS(\xx)\\
=&\int_{\SSS^2}\uu(\xx)\cdot \left(\nn\times \yy_{\ell,k}\right)(\xx)dS(\xx)
=\widehat{\uu}_{\ell,k}^\curl. 
\end{split}
\end{equation*}
With the same argument, one can also show that $(\widehat{\nn\times \uu})_{\ell,k}^\curl=\widehat{\uu}_{\ell,k}^\dive$ which means that $\|\nn\times\uu\|_{\HH^\sigma}=\|\uu\|_{\HH^\sigma}$.
\end{proof}

\section{The Semi-discrete Problem}
We now want to give the semi-discretised collocation scheme for the incompressible  Navier-Stokes equations on the sphere.
We will differ between the divergence-free part to get an approximation of the velocity and the curl-free part to get an approximation of the pressure.
\subsection{Approximation Scheme for the Velocity}
As mentioned in the introduction, the idea is to approximate the velocity with the interpolation scheme given in section \ref{2.3} and using the method of lines by
\begin{equation}\label{equation::approx::1}
\uu_h(\cdot,t)=\sum_{j=1}^N\Phi_\dive(\cdot,\xx_j)\aalpha_j(t),\quad t\in[0,T_h),
\end{equation}
where $T_h>0$ is the maximal time of existence of $\uu_h$.
By its construction, the approximation is divergence-free and automatically satisfies equation \eqref{Introduction::NS::2}.
Moreover, to get an approximation of the right hand side in \eqref{equation::NS_divfree::1}, we exchange the Leray operator by its approximated version from \eqref{2.3::Leray_approx}. 
Then, we arrive at the following problem: We search for the solution $\uu_h\in C^1([0,\infty);\cF_{\Phi_\dive}(X))$ which satisfies the discretised equations
\begin{align}
\partial_t\uu_h-\nu\Delta^*\uu_h
&=\cP_{\Phi}\left(\ff-\cB(\uu_h,\uu_h)-\cC(\uu_h)\right),&&\text{in }\cX\times(0,T_h),\label{equation::NS_2::1}\\
\uu_h(\cdot,0)&=\uu_0,&&\text{in } \cX.\label{equation::NS_2::2}
\end{align}
To be more precise, we now have a system of ODEs in time, where we have to find the coefficient vectors $\aalpha_j\in C^1([0,\infty);T_{\xx_j}(\SSS^2))$, $j=1,\ldots, N$. 
Standard theory of ordinary differential equation ensures the local-in-time existence of a solution up to a maximum time which we will denote by $T_h$. 
This time depends on the initial data, the right hand side as well as the data sites $\cX$. However, analogue to the continuation criterion for ODEs, one can show that the solution of \eqref{equation::NS_2::1} - \eqref{equation::NS_2::2} has to exist globally in time if the solution is bounded on the maximum time-interval $[0,T_h)$.

We will investigate the difference between the solution $\uu$ of the projected Navier-Stokes equations \eqref{equation::NS_divfree::1} - \eqref{equation::NS_divfree::2} and our approximated solution $\uu_h$ of \eqref{equation::NS_2::1} - \eqref{equation::NS_2::2}
\begin{equation*}
\|\uu(\cdot,t)-\uu_h(\cdot,t)\|_{\HH^\sigma_\dive}\leq \|\rrho_h(\cdot,t)\|_{\HH^\sigma_\dive}+\|\ttheta_h(\cdot,t)\|_{\HH^\sigma_\dive},
\end{equation*}
where we split up the error into the projection error $\rrho_h=\uu-\cR_{\Phi_\dive}\uu$ and the stability error $\ttheta_h=\cR_{\Phi_\dive}\uu-\uu_h$. 
The projection error can easily be estimated by Theorem \ref{approximation::theroem::2}, so it remains to investigate the stability error.
The procedure will be as follows. To find an estimate for the stability error $\ttheta_h$, we will investigate
\begin{equation}\label{Semi::eq::procedure}
\begin{split}
\frac{1}{2}\partial_t\|\ttheta_h\|^2_{\Phi_\dive}
-\langle\nu\boldsymbol{\Delta}^*\ttheta_h,\ttheta_h\rangle_{\Phi_\dive}
=&
\left\langle -\partial_t\rrho_h+\nu\rrho_h+\ff-\cI_{\Phi_\dive}\ff, \ttheta_h\right\rangle_{\Phi_\dive}\\
&-\left\langle \cB(\uu,\uu)-\cI_{\Phi_\dive}\cB(\uu_h,\uu_h) ,\ttheta_h\right\rangle_{\Phi_\dive}\\
&-\left\langle \cC(\uu)-\cI_{\Phi_\dive}\cC(\uu_h),\ttheta_h\right\rangle_{\Phi_\dive}.
\end{split}
\end{equation}
The identity \eqref{Semi::eq::procedure} will be shown in Lemma \ref{Semi::lemma::1}. Estimates for the last two terms of the right hand side will be given separately in Lemma \ref{Semi::lemma::2} and Lemma \ref{Semi::lemma::3}. 
After that, we will give a local-in-time error estimate for the approximated solution in Lemma \ref{Convergence::lemma::1}, before we will use a bootstrap argument to show that the estimate is valid on the whole time interval where the analytical solution exists. 
Theorem \ref{4::theroem::main_theorem} will be the main convergence result of this paper.
Note that, according to Lemma \ref{3::Lemma::existence}, the analytical solution will exist global in time under certain conditions. 
In this case we will show that the approximated solution will exist global in time as well.

Note that in \eqref{Semi::eq::procedure} the term $-\nu\langle\boldsymbol{\Delta}^*\ttheta_h,\ttheta_h\rangle_{\Phi_\dive}$ remains positive since
\begin{equation*}
\begin{split}
-\nu\langle\boldsymbol{\Delta}^*\ttheta_h,\ttheta_h\rangle_{\Phi_\dive}
&=
-\sum_{\ell=1}^\infty\frac{1}{\lambda_\ell\widehat{\phi}_\ell}\sum_{k=1}^{2\ell+1}
(\ttheta_h)_{\ell,k}^\dive
\int_{\SSS^2}\DDelta^*\ttheta_h(\xx)\cdot\yy_{\ell,k}(\xx)dS(\xx)\\
&=
\sum_{\ell=1}^\infty\frac{1}{\widehat{\phi}_\ell}\sum_{k=1}^{2\ell+1}
(\ttheta_h)_{\ell,k}^\dive
\int_{\SSS^2}\ttheta_h(\xx)\cdot\frac{-\DDelta^*\yy_{\ell,k}(\xx)}{\lambda_\ell}dS(\xx)\\
&=
\sum_{\ell=1}^\infty\frac{1}{\widehat{\phi}_\ell}\sum_{k=1}^{2\ell+1}
\left(\int_{\SSS^2}\ttheta_h(\xx)\cdot\yy_{\ell,k}(\xx)dS(\xx)\right)^2
>0.
\end{split}
\end{equation*}
Unfortunately, the term on the right hand side is none of our native space norms of the kernel $\phi$. 
However, it can define a norm which is equivalent to the $\HH^{\sigma+1}_\dive$ norm. 
Hence, we can find a constant $C>0$ such that
\begin{equation}\label{4::eq::equivalece_inner_product}
C^{-1}\|\vv\|_{\HH^{\sigma+1}_\dive}^2
\leq-\langle\boldsymbol{\Delta}^*\vv,\vv\rangle_{\Phi_\dive}
\leq C\|\vv\|_{\HH^{\sigma+1}_\dive}^2
\end{equation}
for all $\vv\in\HH^{\sigma+1}_\dive$.
\begin{lemma}\label{Semi::lemma::1}
Let $\sigma>1$, $\ff\in L^\infty(0,T_h;\HH^{\sigma})$ and $\uu\in C^1([0,T_h),\HH^{\sigma+1})$. For every test function $\chi\in \cF_{\Phi_\dive}(\cX)$, the equation
\begin{equation*}
\begin{split}
\langle \partial_t\ttheta_h-\nu\boldsymbol{\Delta}^*\ttheta_h,\chi\rangle_{\Phi_\dive}
=&
\langle-\partial_t\rrho_h+\nu\rrho_h+\cP \ff-\cP_{\Phi}\ff,\chi\rangle_{\Phi_\dive}\\
&-\langle \cB(\uu,\uu)-\cI_{\Phi_\dive}\cB(\uu_h,\uu_h),\chi\rangle_{\Phi_\dive}\\
&-\langle \cC(\uu)-\cI_{\Phi_\dive}\cC(\uu_h),\chi\rangle_{\Phi_\dive}
\end{split}
\end{equation*}
holds almost everywhere on $[0,T_h)$.
\end{lemma}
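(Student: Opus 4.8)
The plan is to convert the claimed identity, which is an equation in the reproducing kernel Hilbert space $\cN_{\Phi_\dive}$, into a purely pointwise identity at the collocation nodes $\cX$, and then to obtain that pointwise identity by combining the three relations at our disposal: the projected Navier--Stokes equation \eqref{equation::NS_divfree::1} satisfied by $\uu$, the collocation equations \eqref{equation::NS_2::1} satisfied by $\uu_h$, and the defining property of the Ritz projection $\cR_{\Phi_\dive}$.

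First I would write an arbitrary test function as $\chi=\sum_{j=1}^N\Phi_\dive(\cdot,\xx_j)\ggamma_j$ with $\ggamma_j\in T_{\xx_j}(\SSS^2)$. The hypotheses, together with the standing assumptions on $\phi$ under which $\cR_{\Phi_\dive}$ and $\cP_{\Phi}$ were introduced, ensure that every function appearing in the argument can be evaluated at the $\xx_j$ and, after an application of $\cP$, lies in $\cN_{\Phi_\dive}=\HH^\sigma_\dive$: the functions $\ttheta_h$, $\partial_t\ttheta_h$, $\DDelta^*\ttheta_h$, $\cR_{\Phi_\dive}\uu$, $\uu_h$ are (time-dependent) elements of the finite-dimensional space $\cF_{\Phi_\dive}(\cX)$; $\uu$ and $\rrho_h=\uu-\cR_{\Phi_\dive}\uu$ are divergence-free by hypothesis; rearranging \eqref{equation::NS_divfree::1}, whose right-hand side lies in $\HH^\sigma_\dive$ since $\cB(\uu,\uu),\cC(\uu)\in\HH^\sigma$ by Lemma \ref{auxiliary_results::lemma::1} and Lemma \ref{auxiliary_results::lemma::2} and $\ff\in\HH^\sigma$, shows that $\DDelta^*\uu$ actually lies in $\HH^\sigma_\dive$; and $\cB(\uu,\uu),\cC(\uu)$ themselves are in $\HH^\sigma$. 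By the reproducing property $\ggamma_j^{T}\ww(\xx_j)=\langle\ww,\Phi_\dive(\cdot,\xx_j)\ggamma_j\rangle_{\Phi_\dive}$, summed over $j$, one has $\langle\ww,\chi\rangle_{\Phi_\dive}=\sum_{j=1}^N\ggamma_j^{T}\ww(\xx_j)$ for every $\ww\in\cN_{\Phi_\dive}$; since moreover $\langle\cdot,\cdot\rangle_{\Phi_\dive}$ depends only on the divergence-free Fourier coefficients, $\langle\vv,\chi\rangle_{\Phi_\dive}=\langle\cP\vv,\chi\rangle_{\Phi_\dive}=\sum_{j}\ggamma_j^{T}(\cP\vv)(\xx_j)$ for any tangential $\vv\in\HH^\sigma$. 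Hence it suffices to verify the identity nodewise on $\cX$, i.e.\ with each $\langle\,\cdot\,,\chi\rangle_{\Phi_\dive}$ replaced by $\sum_{j}\ggamma_j^{T}\cP(\,\cdot\,)(\xx_j)$.

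The nodewise check is then essentially forced. Evaluating $\partial_t\ttheta_h-\nu\DDelta^*\ttheta_h$ at $\xx_j$ and writing $\ttheta_h=\cR_{\Phi_\dive}\uu-\uu_h$, the $\uu_h$-part equals $-\cP_{\Phi}\bigl(\ff-\cB(\uu_h,\uu_h)-\cC(\uu_h)\bigr)(\xx_j)$ by \eqref{equation::NS_2::1}. In the $\cR_{\Phi_\dive}\uu$-part I would substitute $\cR_{\Phi_\dive}\uu=\uu-\rrho_h$ and use \eqref{equation::NS_divfree::1} to replace $(\partial_t\uu-\nu\DDelta^*\uu)(\xx_j)$ by $\cP\bigl(\ff-\cB(\uu,\uu)-\cC(\uu)\bigr)(\xx_j)$, which leaves the remainder $-\partial_t\rrho_h(\xx_j)+\nu\DDelta^*\rrho_h(\xx_j)$. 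The crucial point is that the Ritz projection is built precisely so that $(-\DDelta^*+\id)(\cR_{\Phi_\dive}\uu-\uu)$ vanishes on $\cX$ (Theorem \ref{approximation::theroem::2}, cf.\ \eqref{2::equation::Ritz::1}), i.e.\ $\DDelta^*\rrho_h(\xx_j)=\rrho_h(\xx_j)$; this turns the spurious $\nu\DDelta^*\rrho_h(\xx_j)$ into exactly the $\nu\rrho_h$ term of the statement rather than a leftover Laplacian. Collecting the contributions into the four blocks $-\partial_t\rrho_h+\nu\rrho_h$, $\ \cP\ff-\cP_{\Phi}\ff$, $\ -(\cP\cB(\uu,\uu)-\cP_{\Phi}\cB(\uu_h,\uu_h))$, $\ -(\cP\cC(\uu)-\cP_{\Phi}\cC(\uu_h))$, multiplying by $\ggamma_j^{T}$, summing over $j$, and re-expressing the node sums as $\Phi_\dive$-inner products against $\chi$ (where $\cI_{\Phi_\dive}\cB(\uu_h,\uu_h)$ and $\cI_{\Phi_\dive}\cC(\uu_h)$ are understood as $\cP_{\Phi}\cB(\uu_h,\uu_h)$ and $\cP_{\Phi}\cC(\uu_h)$) gives exactly the asserted equation.

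The step I expect to be most delicate is not the algebra, which is forced, but the accompanying bookkeeping: verifying that all occurring functions are regular enough to be evaluated at $\cX$ and lie in $\cN_{\Phi_\dive}$ (here the smoothness of $\phi$, the regularity gained by rearranging \eqref{equation::NS_divfree::1}, and the mapping properties of $\cB$ and $\cC$ are used); keeping track of the fact that $\langle\cdot,\cdot\rangle_{\Phi_\dive}$ silently projects onto the divergence-free part, so that the exact Leray operator $\cP$ in \eqref{equation::NS_divfree::1} and the discrete one $\cP_{\Phi}$ in \eqref{equation::NS_2::1} are matched correctly; and noting that, since $\ff$ and hence $\partial_t\uu_h$ are only in $L^\infty$ in time, the collocation equations---and therefore the identity---hold only for almost every $t\in[0,T_h)$.
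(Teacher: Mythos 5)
Your proposal is correct and follows essentially the same route as the paper's proof: it combines the collocation equations \eqref{equation::NS_2::1}, the projected equation \eqref{equation::NS_divfree::1}, and the nodal Ritz-projection property \eqref{2::equation::Ritz::1} (turning $\nu\DDelta^*\rrho_h$ into $\nu\rrho_h$ on $\cX$) to obtain the pointwise identity at the collocation points, and then pairs with $\chi\in\cF_{\Phi_\dive}(\cX)$ using the reproducing property together with $\langle\cP\gggg,\chi\rangle_{\Phi_\dive}=\langle\gggg,\chi\rangle_{\Phi_\dive}$ and $\cP_{\Phi}=\cI_{\Phi_\dive}$. The only difference is expository (you reduce the inner-product identity to a nodewise one first, while the paper derives the nodewise identity and then tests against $\chi$), plus some extra regularity bookkeeping on your part that the paper leaves implicit.
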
 
\begin{proof}
Using the property \eqref{2::equation::Ritz::1} of the discretised Ritz projection, we note that 
\begin{equation*}
-\boldsymbol{\Delta}^*\cR_{\Phi_\dive}\uu=-\boldsymbol{\Delta}^*\uu+\uu-\cR_{\Phi_\dive}\uu=-\boldsymbol{\Delta}^*\uu+\rrho_h
\end{equation*}
on $\cX$. Using \eqref{equation::NS_2::1}, we can rewrite the left hand to
\begin{equation*}
\begin{split}
\partial_t\ttheta_h-\nu\boldsymbol{\Delta}^*\ttheta_h
&=
\!\partial_t\cR_{\Phi_\dive}\uu-\nu\boldsymbol{\Delta}^*\cR_{\Phi_\dive}\uu-\cP_{\Phi}\left(\ff-\cB(\uu_h,\uu_h)-\cC(\uu_h)\right)\\
&=
\!\partial_t\cR_{\Phi_\dive}\uu-\nu\boldsymbol{\Delta}^*\uu+\nu\rrho_h-\cP_{\Phi}\left(\ff-\cB(\uu_h,\uu_h)-\cC(\uu_h)\right).
\end{split}
\end{equation*}
Solving \eqref{equation::NS_divfree::1} for $-\nu\boldsymbol{\Delta}^*\uu$, and inserting it into the equality above yields
\begin{equation*}
\begin{split}
\partial_t\ttheta_h-\nu\Delta^*\ttheta_h=&-\partial_t\rrho_h+\nu\rrho_h+\cP \ff-\cP_{\Phi}\ff\\
&-\cP(\cB(\uu,\uu))+\cP_{\Phi}(\cB(\uu_h,\uu_h))-\cP(\cC(\uu))+\cP_{\Phi}(\cC(\uu_h))
\end{split}
\end{equation*}
pointwise on $\cX$. Hence, for every $\chi\in \cF_{\Phi_\dive}(\cX)$ we have
\begin{equation*}
\begin{split}
\langle \partial_t\ttheta_h-\nu\boldsymbol{\Delta}^*\uu,\chi\rangle_{\Phi_\dive}=&\langle-\partial_t\rrho_h+\nu\rrho_h+\cP \ff-\cP_{\Phi}\ff,\chi\rangle_{\Phi_\dive}\\
&-\langle\cP(\cB(\uu,\uu))-\cP_{\Phi}(\cB(\uu_h,\uu_h)),\chi\rangle_{\Phi_\dive}\\
&-\langle\cP(\cC(\uu))-\cP_{\Phi}(\cC(\uu_h)),\chi\rangle_{\Phi_\dive}.
\end{split}
\end{equation*}
The fact that we have $\langle \cP \gggg,\chi\rangle_{\Phi_\dive}=\langle \gggg,\chi\rangle_{\Phi_\dive}$ for all $\gggg\in \HH^\sigma$ and $\chi\in \cF_{\Phi_\dive}(\cX)$ together with $\cP_{\Phi}=\cP \cI_{\Phi}=\cI_{\Phi_\dive}$ completes the proof.
\end{proof}

As we have now proved the equality in \eqref{Semi::eq::procedure}, we can go on in finding estimates for the terms on the right hand side. 
We will start with the second one, which addresses the convective part. Here, we will start with finding an estimate under the condition that the approximated solution is sufficiently good.
\begin{lemma}\label{Semi::lemma::2}
Let $\sigma>1$, $t_0\in(0,T)$ and $\uu(\cdot,t_0)\in \HH^{2\sigma+1}$. Let $M\geq1$ be a positive number such that $\|\uu(\cdot,t_0)-\uu_h(\cdot,t_0)\|_{\HH^\sigma_\dive}\leq M$. Then, for $t=t_0$, there exists a $C=C(\uu)>0$, independent of $\cX$ and $M$ but linear in $\|\uu\|_{C([0,T];H^{2\sigma}_\dive)}$, such that
\begin{equation*}
\begin{split}
\big\vert\langle \cB(\uu,\uu)-\cI_{\Phi_\dive}&\cB(\uu_h,\uu_h),\ttheta_h\rangle_{\Phi_\dive}\big\vert\\
\leq&
C h_{\cX}^\sigma M \left(1+\|\uu\|_{H^{2\sigma+1}_\dive}\right) \left(\|\ttheta_h\|_{\Phi_\dive}
+
\|\ttheta_h\|_{\HH^{\sigma+1}_\dive} \right)\\
&+
C\|\ttheta_h\|_{\Phi_\dive}^2
+
CM
\|\ttheta_h\|_{\HH^{\sigma+1}_\dive}\|\ttheta_h\|_{\Phi_\dive}.
\end{split}
\end{equation*} 
\end{lemma}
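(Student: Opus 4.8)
The plan is to reduce everything to the bilinearity of $\cB$ and the interpolation error bound from Theorem \ref{2::Theorem::1}, together with the continuous bounds for $\cB$ in Lemma \ref{auxiliary_results::lemma::1}. First I would add and subtract $\cB(\uu,\uu_h)$, or perhaps more symmetrically write $\uu_h = \uu - \rrho_h - \ttheta_h$ so that $\uu - \uu_h = \rrho_h + \ttheta_h$, and use bilinearity to expand
$$\cB(\uu,\uu) - \cB(\uu_h,\uu_h) = \cB(\uu-\uu_h,\uu) + \cB(\uu_h,\uu-\uu_h) = \cB(\rrho_h+\ttheta_h,\uu) + \cB(\uu_h,\rrho_h+\ttheta_h).$$
Then I would split the inner product as
$$\langle \cB(\uu,\uu) - \cI_{\Phi_\dive}\cB(\uu_h,\uu_h),\ttheta_h\rangle_{\Phi_\dive}
= \langle \cB(\uu,\uu) - \cB(\uu_h,\uu_h),\ttheta_h\rangle_{\Phi_\dive}
+ \langle (\id - \cI_{\Phi_\dive})\cB(\uu_h,\uu_h),\ttheta_h\rangle_{\Phi_\dive},$$
handling the ``consistency'' term (the interpolation error of $\cB$ applied to the discrete solution) and the ``bilinear'' term separately.

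For the consistency term, I would bound $\langle (\id - \cI_{\Phi_\dive})\cB(\uu_h,\uu_h),\ttheta_h\rangle_{\Phi_\dive}$ using Cauchy--Schwarz in $\langle\cdot,\cdot\rangle_{\Phi_\dive}$ together with the norm equivalence $\cN_{\Phi_\dive}\cong\HH^\sigma_\dive$, giving something like $\|(\id-\cI_{\Phi_\dive})\cB(\uu_h,\uu_h)\|_{\HH^\sigma_\dive}\,\|\ttheta_h\|_{\Phi_\dive}$, and I would invoke Theorem \ref{2::Theorem::1} with $\tau=\sigma$, losing a factor $h_\cX^\sigma$ against $\|\cB(\uu_h,\uu_h)\|_{\HH^{2\sigma}_\dive}$. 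Wait --- that requires $\cB(\uu_h,\uu_h)\in\HH^{2\sigma}$, so I actually want to write $\uu_h = \uu - (\rrho_h+\ttheta_h)$ here too and expand $\cB(\uu_h,\uu_h)$ into a piece $\cB(\uu,\uu)$ (which lies in $\HH^{2\sigma}$ by Lemma \ref{auxiliary_results::lemma::1} since $\uu\in\HH^{2\sigma+1}$) plus cross terms involving $\rrho_h$ and $\ttheta_h$. The $\cB(\uu,\uu)$ piece contributes the clean $C h_\cX^\sigma \|\uu\|_{\HH^{2\sigma+1}_\dive}\|\ttheta_h\|_{\Phi_\dive}$ term (absorbing $\|\uu\|$-dependence into the linear-in-$\|\uu\|_{C([0,T];H^{2\sigma}_\dive)}$ requirement for the constant), and the $\rrho_h$-cross terms are controlled by Theorem \ref{approximation::theroem::2} which gives $\|\rrho_h\|\lesssim h_\cX^\sigma\|\uu\|$; the $\ttheta_h$-cross terms will produce the $C\|\ttheta_h\|_{\Phi_\dive}^2$ and $CM\|\ttheta_h\|_{\HH^{\sigma+1}_\dive}\|\ttheta_h\|_{\Phi_\dive}$ terms after using $\|\ttheta_h\|_{\HH^\sigma}\le\|\uu-\uu_h\|_{\HH^\sigma}+\|\rrho_h\|_{\HH^\sigma}\lesssim M$.

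For the bilinear term $\langle \cB(\rrho_h+\ttheta_h,\uu) + \cB(\uu_h,\rrho_h+\ttheta_h),\ttheta_h\rangle_{\Phi_\dive}$, I would again pass to $\HH^\sigma$-type pairings via norm equivalence and apply the dual estimate $\langle\cB(\uu,\vv),\ww\rangle_{\HH^\sigma}\le C\|\uu\|_{\HH^{\sigma-1}}\|\vv\|_{\HH^\sigma}\|\ww\|_{\HH^{\sigma+1}}$ from Lemma \ref{auxiliary_results::lemma::1}, choosing the slots so that $\ttheta_h$ sits in the $\HH^{\sigma+1}$-factor wherever possible (to match the dissipation term in \eqref{Semi::eq::procedure}, recalling \eqref{4::eq::equivalece_inner_product}) while any low-regularity factor carrying $\rrho_h$ gets the $h_\cX^\sigma$ gain. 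Terms with two copies of $\ttheta_h$ and one of $\uu_h$ (estimated by $M$ in $\HH^\sigma$) give $CM\|\ttheta_h\|_{\HH^{\sigma+1}_\dive}\|\ttheta_h\|_{\Phi_\dive}$; terms with $\uu$ in a slot contribute the $\|\uu\|$-dependent pieces; terms with $\rrho_h$ give the $h_\cX^\sigma$-terms. The main obstacle --- and the part requiring real care --- will be the bookkeeping of which of the several cross terms lands in which of the three terms on the right-hand side, making sure the quadratic-in-$\ttheta_h$ terms never carry a factor of $h_\cX^{-\sigma}$ and that the constant stays \emph{linear} in $\|\uu\|_{C([0,T];H^{2\sigma}_\dive)}$ (so one must be disciplined about never multiplying two $\uu$-norms together, i.e. always keeping one factor as a power of $h_\cX$ times $\|\uu\|_{\HH^{2\sigma+1}}$ rather than pairing $\|\uu\|$ with $\|\uu\|$), together with checking that $2\sigma+1$ regularity on $\uu$ and $\sigma>1$ suffice for all the applications of Lemmas \ref{6::lemma:1}--\ref{auxiliary_results::lemma::1} and Theorems \ref{2::Theorem::1}--\ref{approximation::theroem::2}.
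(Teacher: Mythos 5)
Your toolkit is the right one and most of your pieces coincide with the paper's proof (interpolation error at rate $h_\cX^\sigma$ applied to the smooth term $\cB(\uu,\uu)$, boundedness of $\cI_{\Phi_\dive}$ on the native space for the rough products, the dual estimate of Lemma \ref{auxiliary_results::lemma::1} with $\ttheta_h$ placed in the $\HH^{\sigma+1}_\dive$ slot, and $\|\ttheta_h\|_{\HH^\sigma_\dive}\lesssim M$). The genuine gap lies in your initial splitting $\cB(\uu,\uu)-\cI_{\Phi_\dive}\cB(\uu_h,\uu_h)=[\cB(\uu,\uu)-\cB(\uu_h,\uu_h)]+(\id-\cI_{\Phi_\dive})\cB(\uu_h,\uu_h)$. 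Even after you re-expand $\cB(\uu_h,\uu_h)$ around $\uu$ (which you correctly recognize is forced, since $\cB(\uu_h,\uu_h)$ has no uniform $\HH^{2\sigma}$ bound), the operator $\id-\cI_{\Phi_\dive}$ still acts on cross terms carrying $\rrho_h$ in the \emph{second}, differentiated slot of $\cB$, e.g.\ $(\id-\cI_{\Phi_\dive})\cB(\uu_h,\rrho_h)$. Your proposed treatment (Cauchy--Schwarz in $\langle\cdot,\cdot\rangle_{\Phi_\dive}$ plus ``$\|\rrho_h\|\lesssim h_\cX^\sigma\|\uu\|$'') needs $\|\cB(\uu_h,\rrho_h)\|_{\HH^\sigma_\dive}\lesssim\|\uu_h\|_{\HH^\sigma_\dive}\|\rrho_h\|_{\HH^{\sigma+1}_\dive}$, but Theorem \ref{approximation::theroem::2} gives $\|\rrho_h\|_{\HH^{\sigma+1}_\dive}\lesssim h_\cX^{\sigma-1}\|\uu\|_{\HH^{2\sigma}_\dive}$ only; the $O(h_\cX^{\sigma})$ bound holds in $\HH^\sigma_\dive$, not in $\HH^{\sigma+1}_\dive$. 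Done naively, this term is one power of $h_\cX$ short and would degrade the lemma (and hence the main theorem) to order $h_\cX^{\sigma-1}$.

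The paper's decomposition is designed to avoid exactly this: it writes the left-hand side as $\langle\cB(\uu,\uu)-\cI_{\Phi_\dive}\cB(\uu,\uu),\ttheta_h\rangle_{\Phi_\dive}+\langle\cI_{\Phi_\dive}\cB(\uu-\uu_h,\uu),\ttheta_h\rangle_{\Phi_\dive}+\langle\cI_{\Phi_\dive}\cB(\uu_h,\uu-\uu_h),\ttheta_h\rangle_{\Phi_\dive}$, so the interpolation error only ever hits $\cB(\uu,\uu)$, every rough product sits under the bounded operator $\cI_{\Phi_\dive}$, and the dangerous piece $\langle\cB(\uu_h,\rrho_h),\ttheta_h\rangle_{\Phi_\dive}$ is controlled by the dual estimate $\lesssim\|\uu_h\|_{\HH^{\sigma-1}_\dive}\|\rrho_h\|_{\HH^{\sigma}_\dive}\|\ttheta_h\|_{\HH^{\sigma+1}_\dive}$, which measures $\rrho_h$ only in $\HH^\sigma$ (rate $h_\cX^\sigma$) at the price of a $\|\ttheta_h\|_{\HH^{\sigma+1}_\dive}$ factor later absorbed by the dissipation via \eqref{4::eq::equivalece_inner_product}. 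Your route is salvageable by the same mechanism: estimate $\vert\langle(\id-\cI_{\Phi_\dive})\cB(\uu_h,\rrho_h),\ttheta_h\rangle_{\Phi_\dive}\vert\lesssim\|(\id-\cI_{\Phi_\dive})\cB(\uu_h,\rrho_h)\|_{\HH^{\sigma-1}_\dive}\|\ttheta_h\|_{\HH^{\sigma+1}_\dive}$ (this uses the coefficient comparison $1/(\lambda_\ell\widehat{\phi}_\ell)\sim(1+\lambda_\ell)^{\sigma}$, not merely norm equivalence), then apply Theorem \ref{2::Theorem::1} in the weaker norm to gain the missing factor, $\|(\id-\cI_{\Phi_\dive})\cB(\uu_h,\rrho_h)\|_{\HH^{\sigma-1}_\dive}\lesssim h_\cX\|\cB(\uu_h,\rrho_h)\|_{\HH^{\sigma}_\dive}\lesssim h_\cX\,(\|\uu\|_{\HH^\sigma_\dive}+M)\,h_\cX^{\sigma-1}\|\uu\|_{\HH^{2\sigma}_\dive}$. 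With that repair (and the same care for any other cross term with $\rrho_h$ in the convected slot) your argument closes and gives the stated bound; also note that your self-imposed rule of never pairing two $\uu$-norms is stronger than necessary, since the target estimate itself contains the product $\|\uu\|_{\HH^{2\sigma}_\dive}\|\uu\|_{\HH^{2\sigma+1}_\dive}$, split between the constant and the explicit factor.
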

\begin{proof}
Firstly, we will split up the left hand side into
\begin{equation*}
\begin{split}
\langle \cB(\uu,\uu)-\cI_{\Phi_\dive}\cB(\uu_h,\uu_h),\ttheta_h\rangle_{\Phi_\dive}
=&
\langle \cB(\uu,\uu)-\cI_{\Phi_\dive}\cB(\uu,\uu),\ttheta_h\rangle_{\Phi_\dive}\\
&+
\langle \cI_{\Phi_\dive}\cB(\uu-\uu_h,\uu),\ttheta_h\rangle_{\Phi_\dive}\\
&+
\langle \cI_{\Phi_\dive}\cB(\uu_h,\uu-\uu_h),\ttheta_h\rangle_{\Phi_\dive}.
\end{split}
\end{equation*}
We will estimate the three terms separately. With Theorem \ref{2::Theorem::1} and Lemma \ref{auxiliary_results::lemma::1}, the first term can be estimated by
\begin{equation*}
\begin{split}
\vert\langle \cB(\uu,\uu)-\cI_{\Phi_\dive}\cB(\uu,\uu),\ttheta_h\rangle_{\Phi_\dive}\vert
&\leq 
\|\cB(\uu,\uu)-\cI_{\Phi_\dive}\cB(\uu,\uu)\|_{\Phi_\dive}\|\ttheta_h\|_{\Phi_\dive}\\
&\leq
Ch^\sigma_\cX\|\cB(\uu,\uu)\|_{H^{2\sigma}_\dive}\|\ttheta_h\|_{\Phi_\dive}\\
&\leq 
Ch^\sigma_\cX\|\uu\|_{\HH^{2\sigma}_\dive}\|\uu\|_{\HH^{2\sigma+1}_\dive}\|\ttheta_h\|_{\Phi_\dive}.
\end{split}
\end{equation*}
Using the boundedness of the interpolation operator $\|\cI_{\Phi_\dive} \vv\|_{\Phi_\dive}\leq\|\vv\|_{\Phi_\dive}$ for all $\vv\in \HH^{\sigma}_\dive$, we have for the second term
\begin{equation*}
\begin{split}
\vert\langle \cI_{\Phi_\dive}\cB(\uu-\uu_h,\uu),\ttheta_h\rangle_{\Phi_\dive}\vert
&\leq
\|\cB(\uu-\uu_h,\uu)\|_{\Phi_\dive}\|\ttheta_h\|_{\Phi_\dive}\\
&\leq
C\|\uu-\uu_h\|_{\HH^\sigma_\dive}\| \uu\|_{\HH^{\sigma+1}_\dive}\|\ttheta_h\|_{\Phi_\dive},
\end{split}
\end{equation*}
where, using the decomposition $\uu-\uu_h=\rrho_h+\ttheta_h$, the first norm can be estimated with Theorem \ref{2::Theorem::1} by
\begin{equation}\label{4::eq::proof::2}
\|\uu-\uu_h\|_{\HH^\sigma_\dive}\leq \|\rrho_h\|_{\HH^\sigma_\dive}+\|\ttheta_h\|_{\HH^\sigma_\dive}\leq Ch^\sigma_\cX\|\uu\|_{\HH^{2\sigma}_\dive}+\|\ttheta_h\|_{\HH^\sigma_\dive}.
\end{equation}
For the third term we need a little more effort. We begin by splitting it up by
\begin{equation}\label{4::eq::proof::1}
\begin{split}
\langle \cI_{\Phi_\dive}\cB(\uu_h,\uu-\uu_h),\ttheta_h\rangle_{\Phi_\dive}
=&
\langle \cI_{\Phi_\dive}\cB(\uu_h,\rrho_h),\ttheta_h\rangle_{\Phi_\dive}\\
&+
\langle \cI_{\Phi_\dive}\cB(\uu_h,\ttheta_h),\ttheta_h\rangle_{\Phi_\dive}.
\end{split}
\end{equation}
Using the self-adjointness of the interpolation operator $\cI_{\Phi_\dive}$ to split up the first part yields
\begin{equation*}
\begin{split}
\langle \cI_{\Phi_\dive}\cB(\uu_h,\rrho_h),\ttheta_h\rangle_{\Phi_\dive}
=&
\langle \cB(\uu_h,\rrho_h),\cI_{\Phi_\dive}\ttheta_h\rangle_{\Phi_\dive}\\
=&\langle \cB(\uu_h,\rrho_h),\ttheta_h\rangle_{\Phi_\dive}\\
&+
\langle \cB(\uu_h,\rrho_h),\cI_{\Phi_\dive}\ttheta_h-\ttheta_h\rangle_{\Phi_\dive},
\end{split}
\end{equation*}
where we have on the one hand the estimate
\begin{equation*}
\begin{split}
\vert\langle \cB(\uu_h,\rrho_h),\ttheta_h\rangle_{\Phi_\dive}\vert
&\leq
C\|\uu_h\|_{\HH^{\sigma-1}_\dive}\|\rrho_h\|_{\HH^{\sigma}_\dive}\|\ttheta_h\|_{\HH^{\sigma+1}_\dive}\\
&\leq
C h_{\cX}^\sigma(\|\uu\|_{\HH^{\sigma-1}_\dive}+M)\|\uu\|_{\HH^{2\sigma}_\dive}\|\ttheta_h\|_{\HH^{\sigma+1}_\dive}
\end{split}
\end{equation*}
and on the other hand
\begin{equation*}
\begin{split}
\vert\langle \cB(\uu_h,\rrho_h),\cI_{\Phi_\dive}\ttheta_h-\ttheta_h\rangle_{\Phi_\dive}\vert
&\leq
C\|\uu_h\|_{\HH^{\sigma}_\dive}\|\rrho_h\|_{\HH^{\sigma+1}_\dive}\|\cI_{\Phi_\dive}\ttheta_h\!-\!\ttheta_h\|_{\HH^{\sigma}_\dive}\\
&\leq
C(\|\uu\|_{\HH^{\sigma}_\dive}\!\!+\!M)h_{\cX}^{\sigma-1}\|\uu\|_{\HH^{2\sigma}_\dive}h_{\cX}\|\ttheta_h\|_{\HH^{\sigma+1}_\dive}\\
&\leq
Ch_{\cX}^{\sigma}(\|\uu\|_{\HH^{\sigma}_\dive}+M)\|\uu\|_{\HH^{2\sigma}_\dive}\|\ttheta_h\|_{\HH^{\sigma+1}_\dive}.
\end{split}
\end{equation*}
Finally, using the Cauchy-Schwarz inequality once again, the second part of \eqref{4::eq::proof::1} can be bounded by
\begin{equation*}
\begin{split}
\vert\langle \cI_{\Phi_\dive}\cB(\uu_h,\ttheta_h),\ttheta_h\rangle_{\Phi_\dive}\vert
&\leq 
C\|\uu_h\|_{\HH^{\sigma}_\dive}\|\ttheta_h\|_{\HH^{\sigma+1}_\dive}\|\ttheta_h\|_{\Phi_\dive}\\
&\leq 
C(\|\uu\|_{\HH^{\sigma}_\dive}+M)\|\ttheta_h\|_{\HH^{\sigma+1}_\dive}\|\ttheta_h\|_{\Phi_\dive}.
\end{split}
\end{equation*}
Summing up all the inequalities from above finishes the proof.
\end{proof}
Now we will give an estimate for the third term of the left hand side of \eqref{Semi::eq::procedure}.
\begin{lemma}\label{Semi::lemma::3}
Let $\sigma>1$, $t\in(0,T)$ and $\uu(\cdot,t)\in \HH^{\sigma+1}$. There exists a constant $C>0$ independent of $\uu$, $\cX$ and $t$, such that
\begin{equation*}
\big\vert\langle \cC(\uu)-\cI_{\Phi_\dive}\cC(\uu_h),\ttheta_h\rangle_{\Phi_\dive}\big\vert
\leq
C  h_{\cX}^\sigma \|\uu\|_{\HH^{2\sigma}_\dive}\|\ttheta_h\|_{\Phi_\dive} +
C\|\ttheta_h\|_{\Phi_\dive}^2.
\end{equation*} 
\end{lemma}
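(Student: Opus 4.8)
The argument runs parallel to the proof of Lemma \ref{Semi::lemma::2} but is much shorter, since $\cC$ is linear. The plan is to insert $\cI_{\Phi_\dive}\cC(\uu)$ and split, using linearity of both $\cC$ and $\cI_{\Phi_\dive}$,
\begin{equation*}
\cC(\uu)-\cI_{\Phi_\dive}\cC(\uu_h)
=\bigl(\cC(\uu)-\cI_{\Phi_\dive}\cC(\uu)\bigr)+\cI_{\Phi_\dive}\cC(\uu-\uu_h),
\end{equation*}
and then to estimate the two inner products against $\ttheta_h$ one at a time. Throughout, one uses the norm equivalence $\|\cdot\|_{\Phi_\dive}\sim\|\cdot\|_{\HH^\sigma_\dive}$ and the boundedness $\|\cC(\vv)\|_{\HH^s}\leq C\Omega\|\vv\|_{\HH^s}$ from Lemma \ref{auxiliary_results::lemma::2}; the factor $\Omega$ gets absorbed into $C$ at the end.

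For the first term I would first note that, since $\ttheta_h\in\cF_{\Phi_\dive}(\cX)\subset\HH^\sigma_\dive$ and $\langle\cdot,\cdot\rangle_{\Phi_\dive}$ depends only on the divergence-free Fourier coefficients, one has $\langle\cC(\uu),\ttheta_h\rangle_{\Phi_\dive}=\langle\cP\cC(\uu),\ttheta_h\rangle_{\Phi_\dive}$ and $\cI_{\Phi_\dive}\cC(\uu)=\cP_{\Phi}\cC(\uu)$, exactly as in the proof of Lemma \ref{Semi::lemma::1}. Hence the first inner product equals $\langle\cP\cC(\uu)-\cP_{\Phi}\cC(\uu),\ttheta_h\rangle_{\Phi_\dive}$, and after Cauchy--Schwarz in $\langle\cdot,\cdot\rangle_{\Phi_\dive}$ it is controlled by the Leray error estimate of Theorem \ref{2::Theorem::1} applied to $\cC(\uu)$ (which inherits the regularity of $\uu$, so the oversampled power $h_{\cX}^\sigma$ is available) together with Lemma \ref{auxiliary_results::lemma::2}. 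This produces a bound of the form $Ch_{\cX}^\sigma\|\uu\|_{\HH^{2\sigma}_\dive}\|\ttheta_h\|_{\Phi_\dive}$, i.e.\ the first summand in the claim.

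For the second term, observe that $\uu-\uu_h\in\HH^\sigma_\dive$, since both $\uu$ and $\uu_h$ are divergence-free. By the boundedness of the interpolation operator, $\|\cI_{\Phi_\dive}\cC(\uu-\uu_h)\|_{\Phi_\dive}\leq\|\cC(\uu-\uu_h)\|_{\Phi_\dive}\leq C\Omega\|\uu-\uu_h\|_{\HH^\sigma_\dive}$ by Lemma \ref{auxiliary_results::lemma::2}, so after Cauchy--Schwarz it remains to estimate $\|\uu-\uu_h\|_{\HH^\sigma_\dive}$. Writing $\uu-\uu_h=\rrho_h+\ttheta_h$, the projection part obeys $\|\rrho_h\|_{\HH^\sigma_\dive}\leq Ch_{\cX}^\sigma\|\uu\|_{\HH^{2\sigma}_\dive}$ by Theorem \ref{approximation::theroem::2}, while $\|\ttheta_h\|_{\HH^\sigma_\dive}\leq C\|\ttheta_h\|_{\Phi_\dive}$ by the norm equivalence. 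Collecting all contributions gives $Ch_{\cX}^\sigma\|\uu\|_{\HH^{2\sigma}_\dive}\|\ttheta_h\|_{\Phi_\dive}+C\|\ttheta_h\|_{\Phi_\dive}^2$, which is the asserted estimate.

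None of the steps is deep; the only points requiring a little care are the replacement $\langle\cC(\uu),\ttheta_h\rangle_{\Phi_\dive}=\langle\cP\cC(\uu),\ttheta_h\rangle_{\Phi_\dive}$ (which is precisely what lets one invoke the \emph{Leray} error bound of Theorem \ref{2::Theorem::1} on the not-necessarily-divergence-free field $\cC(\uu)$), the isometry $\|\nn\times\uu\|_{\HH^s}=\|\uu\|_{\HH^s}$ already hidden inside Lemma \ref{auxiliary_results::lemma::2}, and the bookkeeping of Sobolev indices so that all powers of $h_{\cX}$ come out equal to $\sigma$ — which is why one in fact needs $\uu(\cdot,t)$ at regularity $\HH^{2\sigma}$ here.
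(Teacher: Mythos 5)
Your proof is correct and follows essentially the same route as the paper: the same splitting into $\cC(\uu)-\cI_{\Phi_\dive}\cC(\uu)$ and $\cI_{\Phi_\dive}\cC(\uu-\uu_h)$, Cauchy--Schwarz, the interpolation/Leray error estimate of Theorem \ref{2::Theorem::1} combined with the boundedness of $\cC$ from Lemma \ref{auxiliary_results::lemma::2}, and the decomposition $\uu-\uu_h=\rrho_h+\ttheta_h$ with the Ritz estimate for $\rrho_h$ and norm equivalence for $\ttheta_h$. Your explicit remark that the $\Phi_\dive$-inner product only sees the divergence-free part (so the first term is really a Leray-projection error for $\cC(\uu)$) is a point the paper glosses over, but it does not change the argument.
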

\begin{proof}
First, we split up the left hand side into
\begin{equation*}
\begin{split}
\langle \cC(\uu)-\cI_{\Phi_\dive}\cC(\uu_h),\ttheta_h\rangle_{\Phi_\dive}
=&\langle \cC(\uu)-\cI_{\Phi_\dive}\cC(\uu),\ttheta_h\rangle_{\Phi_\dive}\\
&+\langle \cI_{\Phi_\dive}\left(\cC(\uu)-\cC(\uu_h)\right),\ttheta_h\rangle_{\Phi_\dive}.
\end{split}
\end{equation*}
Using the Cauchy-Schwarz inequality, Theorem \ref{2::Theorem::1} and the fact that the operator $\cC$ is bounded and linear, the first term can easily be estimated by
\begin{equation*}
\begin{split}
\langle \cC(\uu)-\cI_{\Phi_\dive}\cC(\uu),\ttheta_h\rangle_{\Phi_\dive}
&\leq \|\cC(\uu)-\cI_{\Phi_\dive}\cC(\uu)\|_{\HH^\sigma_\dive}\|\ttheta_h\|_{\Phi_\dive}\\
&\leq Ch^\sigma_\cX\|\cC(\uu)\|_{\HH^{2\sigma}_\dive}\|\ttheta_h\|_{\Phi_\dive}\\
&\leq Ch^\sigma_\cX\|\uu\|_{\HH^{2\sigma}_\dive}\|\ttheta_h\|_{\Phi_\dive}.
\end{split}
\end{equation*}
Again, Cauchy-Schwarz and the boundedness of the operator $\cC$ yield
\begin{equation*}
\begin{split}
\langle \cI_{\Phi_\dive}\left(\cC(\uu)-\cC(\uu_h)\right),\ttheta_h\rangle_{\Phi_\dive}
\leq & C \|\uu-\uu_h\|_{\HH^\sigma_\dive}\|\ttheta_h\|_{\Phi_\dive},
\end{split}
\end{equation*}
where the first norm on the right hand side can be estimated by \eqref{4::eq::proof::2}.
\end{proof}
We will now prove a local error estimate for our semi-discrete solution. Note that we will write $\uu(t)=\uu(\cdot,t)$ for the sake of readability.

\begin{lemma}\label{Convergence::lemma::1}
Let $\sigma>1$, $\ff\in L^\infty(0,T_h;\HH^{\sigma})$ and $\uu(t)\in C^1([0,T_h),\HH^{2\sigma+1})$. Let $M\geq1$ such that
\begin{equation*}
T_M:=\sup\{t\in [0,T_h)\;\vert\;\max_{s\in[0,t]}\|\uu(\cdot,s)-\uu_h(\cdot,s)\|_{\HH^\sigma_\dive}\leq M\}
\end{equation*}
exists. Then, there exist two constants $C_1=C_1(\uu,\ff,\nu),C_2=C_2(\uu,\ff,\nu)>0$, independent of $\cX$ and $M$, such that 
\begin{equation*}
\sup_{t\in[0,T_M)}\!\left(\|\uu(t)-\uu_h(t)\|_{\HH^{\sigma}_\dive}\!\!+\!\nu h_\cX\|\uu(t)-\uu_h(t)\|_{\HH^{\sigma+1}_\dive}\right)\!\leq\! C_1M^2e^{C_2M^2T_M}h_\cX^\sigma.
\end{equation*}
\end{lemma}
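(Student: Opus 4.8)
The plan is to start from the identity \eqref{Semi::eq::procedure}, which holds by Lemma \ref{Semi::lemma::1} applied with the test function $\chi=\ttheta_h$, and to derive a Gronwall-type differential inequality for the quantity
\begin{equation*}
E(t):=\|\ttheta_h(t)\|_{\Phi_\dive}^2.
\end{equation*}
On the left-hand side, the dissipative term $-\nu\langle\DDelta^*\ttheta_h,\ttheta_h\rangle_{\Phi_\dive}$ is positive and, by \eqref{4::eq::equivalece_inner_product}, bounds $c\nu\|\ttheta_h\|_{\HH^{\sigma+1}_\dive}^2$ from below; I would keep this term on the left so that it can absorb the various $\|\ttheta_h\|_{\HH^{\sigma+1}_\dive}$ factors appearing on the right after a Young/Cauchy--Schwarz splitting. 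So the first step is: bound each of the three inner products on the right-hand side of \eqref{Semi::eq::procedure} with $\chi=\ttheta_h$. The first one, involving $-\partial_t\rrho_h+\nu\rrho_h+\cP\ff-\cP_\Phi\ff$, is handled by Cauchy--Schwarz together with Theorem \ref{approximation::theroem::2} (for $\rrho_h=\uu-\cR_{\Phi_\dive}\uu$ and its time derivative, using $\uu\in C^1([0,T_h),\HH^{2\sigma+1})$) and Theorem \ref{2::Theorem::1} (for $\cP\ff-\cP_\Phi\ff$, using $\ff\in L^\infty(0,T_h;\HH^\sigma)$); this produces a term of order $h_\cX^\sigma$ times $\|\ttheta_h\|_{\Phi_\dive}$. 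The convective term is exactly Lemma \ref{Semi::lemma::2} and the Coriolis term is exactly Lemma \ref{Semi::lemma::3}.

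The second step is the bookkeeping: collecting all the bounds, using $\|\ttheta_h\|_{\Phi_\dive}\le C\|\ttheta_h\|_{\HH^\sigma_\dive}$ from the norm equivalence, and applying Young's inequality $ab\le \tfrac{\varepsilon}{2}a^2+\tfrac{1}{2\varepsilon}b^2$ to every mixed product of the form $(\text{stuff})\cdot\|\ttheta_h\|_{\HH^{\sigma+1}_\dive}$, choosing $\varepsilon$ small enough (relative to $c\nu$) that these are absorbed by the dissipative term on the left. What survives on the right is, schematically,
\begin{equation*}
\tfrac12 E'(t)+\tfrac{c\nu}{2}\|\ttheta_h\|_{\HH^{\sigma+1}_\dive}^2
\le C\,\big(1+M^2\big)E(t)+C\,M^2\big(1+\|\uu\|_{\HH^{2\sigma+1}_\dive}^2\big)h_\cX^{2\sigma},
\end{equation*}
where on $[0,T_M)$ the factor $M\le M^2$ (since $M\ge1$) is used to consolidate the various powers of $M$, and the $h_\cX^\sigma(\cdots)\|\ttheta_h\|_{\Phi_\dive}$ terms are themselves split by Young's inequality into an $E(t)$ piece and an $h_\cX^{2\sigma}$ piece. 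Here $\|\uu\|_{C([0,T];\HH^{2\sigma+1}_\dive)}$ is finite by hypothesis and gets absorbed into the constants $C_1,C_2$; one must check that the constant multiplying $E(t)$ is of the form $C_2M^2$ with $C_2$ independent of $\cX$ and $M$, which is exactly what Lemmas \ref{Semi::lemma::2} and \ref{Semi::lemma::3} were stated to provide.

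The third step is Gronwall: since $\ttheta_h(0)=\cR_{\Phi_\dive}\uu_0-\uu_h(0)=\cR_{\Phi_\dive}\uu_0-\uu_0$ evaluated against the initial condition — actually $\uu_h(\cdot,0)$ interpolates $\uu_0$ so $\ttheta_h(0)=\cR_{\Phi_\dive}\uu_0-\cI_{\Phi_\dive}\uu_0$, whose $\Phi_\dive$-norm is $O(h_\cX^\sigma\|\uu_0\|_{\HH^{2\sigma}})$ by Theorems \ref{2::Theorem::1} and \ref{approximation::theroem::2} — integrating the differential inequality gives $E(t)\le C_1^2 M^4 e^{C_2M^2 t}h_\cX^{2\sigma}$ on $[0,T_M)$, hence $\|\ttheta_h(t)\|_{\HH^\sigma_\dive}\le C\|\ttheta_h(t)\|_{\Phi_\dive}\le C_1M^2e^{C_2M^2T_M/2}h_\cX^\sigma$. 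For the $\HH^{\sigma+1}_\dive$ part, I would integrate the retained dissipative term: $\nu\int_0^t\|\ttheta_h\|_{\HH^{\sigma+1}_\dive}^2\,ds$ is controlled by the same right-hand side, and combined with the factor $h_\cX$ in the statement this yields the $\nu h_\cX\|\uu(t)-\uu_h(t)\|_{\HH^{\sigma+1}_\dive}$ contribution after adding back the projection error $\|\rrho_h\|\le Ch_\cX^{\sigma+1}\|\uu\|_{\HH^{2\sigma+1}}$ via the triangle inequality $\|\uu-\uu_h\|\le\|\rrho_h\|+\|\ttheta_h\|$. I expect the main obstacle to be the careful tracking of the $M$-dependence and the $\nu$-dependence through the Young's-inequality absorptions — making sure every $\|\ttheta_h\|_{\HH^{\sigma+1}_\dive}$ factor is genuinely absorbed by the (fixed) dissipative term and that no hidden power of $M$ or negative power of $\nu$ contaminates the exponent, so that the final constants really have the claimed form $C_1(\uu,\ff,\nu)$, $C_2(\uu,\ff,\nu)$ independent of $\cX$ and $M$.
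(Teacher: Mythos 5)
Your proposal follows essentially the same route as the paper's proof: the energy identity from Lemma \ref{Semi::lemma::1} with $\chi=\ttheta_h$, the bounds from Theorems \ref{2::Theorem::1} and \ref{approximation::theroem::2} together with Lemmas \ref{Semi::lemma::2} and \ref{Semi::lemma::3}, absorption of the $\|\ttheta_h\|_{\HH^{\sigma+1}_\dive}$ factors into the dissipative term via \eqref{4::eq::equivalece_inner_product} and Young's inequality, and Gronwall plus an integration of the retained dissipation for the $\HH^{\sigma+1}_\dive$ part. The only slip is your claimed rate $\|\rrho_h\|_{\HH^{\sigma+1}_\dive}\leq Ch_\cX^{\sigma+1}\|\uu\|_{\HH^{2\sigma+1}}$, which Theorem \ref{approximation::theroem::2} (with $\tau_2\leq 2\sigma$) only gives as $h_\cX^{\sigma-1}$; this is harmless since, as in the paper, the factor $h_\cX$ in front of the $\HH^{\sigma+1}_\dive$ term exactly compensates the reduced order.
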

\begin{proof}
Using Lemma \ref{Semi::lemma::1} with $\chi=\ttheta_h$ yields
\begin{equation*}
\begin{split}
\frac{1}{2}\partial_t\|\ttheta_h\|^2_{\Phi_\dive}
-\nu\langle\DDelta^*\ttheta_h,\ttheta_h\rangle_{\Phi_\dive}
=&
\left\langle -\partial_t\rrho_h+\nu\rrho_h+\ff-\cI_{\Phi_\dive}f, \ttheta_h\right\rangle_{\Phi_\dive}\\
&-\left\langle \cB(\uu,\uu)-\cI_{\Phi_\dive}\cB(\uu_h,\uu_h) ,\ttheta_h\right\rangle_{\Phi_\dive}\\
&-\left\langle \cC(\uu)-\cI_{\Phi_\dive}\cC(\uu_h),\ttheta_h\right\rangle_{\Phi_\dive},
\end{split}
\end{equation*}
where the first term can be bounded by
\begin{equation*}
\begin{split}
\langle -\partial_t\rrho_h+\nu\rrho_h&+\ff-\cI_{\Phi_\dive}\ff, \ttheta_h\rangle_{\Phi_\dive}
\\
\leq&\left(\|\partial_t\rrho_h\|_{\Phi_\dive}+\nu\|\rrho_h\|_{\Phi_\dive}+\|\ff-\cI_{\Phi_\dive}\ff\|_{\Phi_\dive}\right)\|\ttheta_h\|_{\Phi_\dive}.
\end{split}
\end{equation*}
The first two norms can be estimated by Theorem \ref{approximation::theroem::2}, while the third one can be estimated by Theorem \ref{2::Theorem::1}. Together with Lemma \ref{Semi::lemma::2} and Lemma \ref{Semi::lemma::3} we can derive the estimate
\begin{equation*}
\begin{split}
\frac{1}{2}\partial_t\|\ttheta_h\|^2_{\Phi_\dive}
-
\nu\langle\DDelta^*&\ttheta_h,\ttheta_h\rangle_{\Phi_\dive}\\
\leq&
Ch_\cX^\sigma\left(\|\partial_t\uu\|_{\HH^{2\sigma}_\dive}
+\|\uu\|_{\HH^{2\sigma}_\dive}
+\|\ff\|_{\HH^{2\sigma}_\dive}\right)\|\ttheta_h\|_{\Phi_\dive}\\
&+
C h_{\cX}^\sigma M \left(1+\|\uu\|_{H^{2\sigma+1}_\dive}\right) \left(\|\ttheta_h\|_{\Phi_\dive}
+
\|\ttheta_h\|_{\HH^{\sigma+1}_\dive} \right)\\
&+
C\|\ttheta_h\|_{\Phi_\dive}^2
+
CM\|\ttheta_h\|_{\Phi_\dive}\|\ttheta_h\|_{\HH^{\sigma+1}_\dive}\\
&
+C h_{\cX}^\sigma \|\uu\|_{H^{2\sigma}_\dive}\|\ttheta_h\|_{\Phi_\dive} 
+C\|\ttheta_h\|_{\Phi_\dive}^2
\end{split}
\end{equation*}
We will use the Cauchy-Schwarz inequality to separate the terms including $\ttheta_h$. 
Especially, using \eqref{4::eq::equivalece_inner_product}, we will choose a constant $c_\nu>0$, depending on $\nu$, such that
\begin{equation*}
c_\nu\|\ttheta_h\|_{\HH^{\sigma+1}_\dive}^2\!<\!-\frac{\nu}{2}\langle\DDelta^*\ttheta_h,\!\ttheta_h\rangle_{\Phi_\dive}.
\end{equation*}
Collecting all terms including $\|\ttheta_h\|_{\HH^{\sigma+1}_\dive}$ and using the Cauchy-Schwarz inequality we can derive
\begin{equation*}
\begin{split}
C h_{\cX}^\sigma M &\left(1+\|\uu\|_{H^{2\sigma+1}_\dive}\right)\|\ttheta_h\|_{\HH^{\sigma+1}_\dive}
+CM\|\ttheta_h\|_{\Phi_\dive}\|\ttheta_h\|_{\HH^{\sigma+1}_\dive}\\
\leq&
2C^2 h_{\cX}^{2\sigma} M^2 c_\nu^{-1}\left(1+\|\uu\|_{H^{2\sigma+1}_\dive}\right)^2+C^2M^2c_\nu^{-1}\|\ttheta_h\|_{\Phi_\dive}^2 +c_\nu\|\ttheta_h\|_{\HH^{\sigma+1}_\dive}^2
\end{split}
\end{equation*}
Overall, we arrive at the estimate
\begin{equation*}
\begin{split}
\frac{1}{2}\partial_t\|\ttheta_h\|^2_{\Phi_\dive}
\!\!-\!
\nu\langle\DDelta^*\ttheta_h,\ttheta_h\rangle_{\Phi_\dive}
\!
\leq&
CM^2h_\cX^{2\sigma}\!\left(
\|\partial_t\uu\|_{\HH^{2\sigma}_\dive}^2
+\|\uu\|_{\HH^{2\sigma}_\dive}^2\right)\\
&+
CM^2h_\cX^{2\sigma}\!\left(
\|\uu\|_{\HH^{2\sigma+1}_\dive}^2
+\|\ff\|_{\HH^{2\sigma}_\dive}^2\right)\\
&
+C(1+M)\|\ttheta_h\|_{\Phi_\dive}^2
+c_\nu\|\ttheta_h\|_{\HH^{\sigma+1}_\dive},
\end{split}
\end{equation*}
Rearranging this inequality yields
\begin{equation}\label{Convergence::proof::lemma::1::eq::1}
\begin{split}
\frac{1}{2}\partial_t\|\ttheta_h\|^2_{\Phi_\dive}
-\frac{\nu}{2}\langle\DDelta^*\ttheta_h,\ttheta_h\rangle_{\Phi_\dive}
\leq&
CM^2h_\cX^{2\sigma}\left(
\|\partial_t\uu\|_{\HH^{2\sigma}_\dive}^2
+\|\uu\|_{\HH^{2\sigma}_\dive}^2\right)\\
&
+CM^2h_\cX^{2\sigma}\left(
\|\uu\|_{\HH^{2\sigma+1}_\dive}^2
+\|\ff\|_{\HH^{2\sigma}_\dive}^2\right)\\
&
+C(1+M)\|\ttheta_h\|_{\Phi_\dive}^2.
\end{split}
\end{equation}
Hence, for every $t\in[0,T_M)$, Gronwall's inequality implies
\begin{equation*}
\begin{split}
\|\ttheta_h(t)\|^2_{\Phi_\dive}
\leq&
\|\ttheta_h(0)\|^2_{\Phi_\dive}
+CM^2e^{CM^2t}h_\cX^{2\sigma}\|\uu\|_{H^1(0,T_M;\HH^{2\sigma}_\dive)}^2\\
&
+CM^2e^{CM^2t}h_\cX^{2\sigma}\left(\|\uu\|_{L^2(0,T_M;\HH^{2\sigma+1}_\dive)}^2
+\|\ff\|_{L^2(0,T_M;\HH^{2\sigma}_\dive)}^2\right).
\end{split}
\end{equation*}
The initial error can be bounded by
\begin{equation*}
\|\ttheta_h(0)\|^2_{\Phi_\dive}\leq
 \| \cR_{\Phi_\dive} \uu_0-\uu_0 \|^2_{\Phi_\dive}
+\| \uu_0 - \cI_{\Phi_\dive}\uu_0 \|^2_{\Phi_\dive}\leq Ch_\cX^\sigma\|\uu_0\|_{\HH^{2\sigma}_\dive},
\end{equation*}
which finishes the proof for the first part.
For the second part we can use \eqref{4::eq::equivalece_inner_product} again and insert the just derived estimation in \eqref{Convergence::proof::lemma::1::eq::1} and
integrate with respect to the time variable. This yields the same error bound as above for $\|\ttheta_h\|_{L^2(0,T_M;\HH^{\sigma+1}_\dive)}$. For the projection error $\rrho_h$, the approximation order is unfortunately reduced by one because the error is measured in the $\HH^{\sigma+1}_\dive$ norm.
\end{proof}
Now, we will prove our main result of this paper. As the local in time solution has been given in Lemma \ref{Convergence::lemma::1}, it remains to show that the existence of the semidiscrete solution only depends on the existence interval of the solution of the projected Navier-Stokes equations. 
If the conditions of Lemma \ref{3::Lemma::existence} are satisfied and this solution exists global-in-time, our approximation will exist global in time as well.
\begin{theorem}\label{4::theroem::main_theorem}
Let $\sigma>3$. Suppose the initial velocity satisfies $\uu_0\in\HH^{2\sigma+1}_\dive$ and the right hand side satisfies $\ff\in C([0,\infty);\HH^{2\sigma})$. Let $\uu$ be the global solution of the projected Navier-Stokes equations \eqref{equation::NS_divfree::1} - \eqref{equation::NS_divfree::2} on a maximum time interval $[0,T]$ for a $T<\infty$ or on $[0,\infty)$.
Moreover, let $\cX\subset\SSS^d$ be a finite set of pair-wise distinct points with fill distance $h_\cX\leq h_0$ for some $h_0>0$.

Then, the semi-discrete solution $\uu_h\in C^1([0,T_h);\cF_{\Phi_\dive}(\cX))$ of the discretised Navier-Stokes equations exists in all of $[0,T]$ or $[0,\infty)$, respectively, and there exist constants $C_1,C_2>0$, depending on $\ff$, $\nu$, $\uu_0$ and $\uu$, but independent of $\cX$, such that
\begin{equation*}
\|\uu(t)-\uu_h(t)\|_{\HH^\sigma_\dive}+\nu h_\cX\|\uu(t)-\uu_h(t)\|_{\HH^{\sigma+1}_\dive}<C_1e^{C_2t}h_\cX^\sigma.
\end{equation*}
for all $t\in[0,T]$ or $t\in[0,\infty)$, respectively.
\end{theorem}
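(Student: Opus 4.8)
The plan is to upgrade the local-in-time estimate of Lemma~\ref{Convergence::lemma::1} into a global one via a bootstrap (continuation) argument on the size parameter $M$. The key observation is that Lemma~\ref{Convergence::lemma::1} gives a bound on $\|\uu(t)-\uu_h(t)\|_{\HH^\sigma_\dive}$ in terms of $M$ on the interval $[0,T_M)$ where the error stays below $M$, and crucially the constants $C_1,C_2$ there do \emph{not} depend on $M$ or on $\cX$; the dependence on $\uu$ enters only through norms like $\|\uu\|_{C([0,T];\HH^{2\sigma}_\dive)}$, $\|\partial_t\uu\|_{L^2(0,T;\HH^{2\sigma}_\dive)}$, and $\|\ff\|_{L^2(0,T;\HH^{2\sigma}_\dive)}$, all of which are finite by Lemma~\ref{3::Lemma::existence} (which applies since $\uu_0\in\HH^{2\sigma+1}_\dive$, $\ff\in C([0,\infty);\HH^{2\sigma})$, and $2\sigma>3/2$ so the embedding into $C([0,T];\cdot)$ is available through the equation). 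First I would fix $T$ (either the finite maximal time or an arbitrary finite horizon in the $[0,\infty)$ case), set $A:=\|\uu\|_{H^1(0,T;\HH^{2\sigma}_\dive)}^2+\|\uu\|_{L^2(0,T;\HH^{2\sigma+1}_\dive)}^2+\|\ff\|_{L^2(0,T;\HH^{2\sigma}_\dive)}^2+\|\uu_0\|_{\HH^{2\sigma}_\dive}$, so that Lemma~\ref{Convergence::lemma::1} reads
\begin{equation*}
\sup_{t\in[0,T_M)}\|\uu(t)-\uu_h(t)\|_{\HH^\sigma_\dive}\leq C_1 A\, M^2 e^{C_2 M^2 T} h_\cX^\sigma.
\end{equation*}

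**The bootstrap.** Choose $M:=2$ (or any fixed $M\geq 1$), and then choose $h_0>0$ small enough that $C_1 A\, M^2 e^{C_2 M^2 T} h_0^\sigma\leq M/2$. Now argue by contradiction: suppose $T_M<T$. At $t=0$ the error is $\|\uu_0-\uu_h(0)\|_{\HH^\sigma_\dive}=\|\ttheta_h(0)+\rrho_h(0)\|_{\HH^\sigma_\dive}\leq Ch_\cX^\sigma\|\uu_0\|_{\HH^{2\sigma}_\dive}<M$, so $T_M>0$ and by continuity of $t\mapsto\|\uu(t)-\uu_h(t)\|_{\HH^\sigma_\dive}$ (which follows from $\uu\in C^1$ in time and $\uu_h\in C^1$ on its existence interval, together with the fact that the semi-discrete ODE system has a $C^1$ solution) we have $\|\uu(T_M)-\uu_h(T_M)\|_{\HH^\sigma_\dive}=M$ by maximality. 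But applying the displayed estimate on $[0,T_M)$ and letting $t\uparrow T_M$ gives $\|\uu(T_M)-\uu_h(T_M)\|_{\HH^\sigma_\dive}\leq M/2<M$, a contradiction. Hence $T_M\geq T$ for every finite $T$, i.e.\ $T_h\geq T$, which first of all establishes via the continuation criterion for ODEs (the solution $\uu_h$ stays bounded in $\cF_{\Phi_\dive}(\cX)$, since it stays within distance $M$ of the bounded $\uu$) that $\uu_h$ exists on all of $[0,T]$ (resp.\ $[0,\infty)$). With global existence in hand, I would then re-invoke Lemma~\ref{Convergence::lemma::1} \emph{with} $T_M=T$, which yields
\begin{equation*}
\|\uu(t)-\uu_h(t)\|_{\HH^\sigma_\dive}+\nu h_\cX\|\uu(t)-\uu_h(t)\|_{\HH^{\sigma+1}_\dive}\leq C_1 M^2 e^{C_2 M^2 t} h_\cX^\sigma,
\end{equation*}
and absorbing the fixed $M^2$ into a new constant $C_1$ and writing $C_2 M^2$ as a new $C_2$ gives exactly the claimed bound $C_1 e^{C_2 t}h_\cX^\sigma$.

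**The main obstacle.** The delicate point is ensuring that the choice of $h_0$ can be made \emph{independent of} $\cX$ and of the particular point configuration — this works precisely because Lemma~\ref{Convergence::lemma::1} was stated with $\cX$-independent constants, so $h_0$ depends only on $T$, $\nu$, $\uu$, $\ff$, $\uu_0$ (through $A$ and $C_1,C_2$) and the fixed value $M=2$. A secondary subtlety is the logical order in the global case: one cannot a priori invoke Lemma~\ref{Convergence::lemma::1} on $[0,\infty)$ because $T_h$ might be finite; the bootstrap must be run on each finite $[0,T]$, concluding $T_h>T$ for all $T$, hence $T_h=\infty$. One should also double-check that the hypothesis $\sigma>3$ (rather than merely $\sigma>1$) is what is needed to make all the auxiliary lemmas — in particular Lemma~\ref{auxiliary_results::lemma::1}, which needs $\sigma-1>1$, and the regularity $\uu\in C^1([0,T);\HH^{2\sigma+1})$ with $2\sigma+1>\sigma+1$ — simultaneously applicable; this is a matter of tracking the indices rather than a genuine difficulty. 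No new analytic estimate is required beyond what is already assembled; the entire theorem is the packaging of Lemma~\ref{Convergence::lemma::1} and Lemma~\ref{3::Lemma::existence} through a standard continuity argument.
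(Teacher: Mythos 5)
Your proposal is correct and follows essentially the same route as the paper's proof: the bootstrap with a fixed $M=2$, a fill-distance threshold $h_0$ chosen so that Lemma \ref{Convergence::lemma::1} keeps the error strictly below $M$, continuity in time to contradict the maximality of $T_M$, and the ODE continuation criterion (boundedness of $\uu_h$, which stays within distance $M$ of the bounded $\uu$) to conclude existence on all of $[0,T]$ or $[0,\infty)$. The only point to tidy is the case $T_M=T_h<T$, where $\uu_h$ could a priori cease to exist before the error reaches $M$, so your step ``by maximality the error at $T_M$ equals $M$'' is not available there; the paper's ordering handles this — boundedness of the error on $[0,T_h)$ together with the continuation criterion already contradicts the maximality of $T_h$ — and since you state exactly this boundedness argument, it is a matter of rearrangement rather than a missing idea.
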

\begin{proof}
Since the initial error is just the interpolation error of $\uu_0$, it can be bounded by $C h_\cX^\sigma$. 
Now choose $h_1$ small enough such that $\|\uu(0)-\uu_h(0)\|_{\HH^\sigma_\dive}< 1$ for all $\cX\subset\SSS^2$  with $h_\cX<h_1$ and fix $M:=2$. 
Since $\uu$ and $\uu_h$ are continuous in time, we have that
\begin{equation*}
T_M=\sup\left\{t\in [0,T_h)\;\vert\;\max_{s\in[0,t]}\|\uu(s)-\uu_h(s)\|_{\HH^\sigma_\dive}\leq 2\right\}
\end{equation*}
exists with $T_M>0$. Applying Lemma \ref{Convergence::lemma::1} yields that there exists a constant $C>0$ such that
\begin{equation*}
\|\uu(t)-\uu_h(t)\|_{\HH^\sigma_\dive}\leq C h_\cX^\sigma
\end{equation*}
for all $t\in[0,T_M)$. Hence, we find a $0<h_0<h_1$ such that $\|\uu(t)-\uu_h(t)\|_{\HH^\sigma_\dive}\leq 1$ for all $t\in[0,T_M)$ and $h_\cX<h_1$.

To complete the proof, it remains to show that $T_M=T$ or $T_M=\infty$, respectively. 
We will concentrate on the second case. 
The proof for $T_M=T$ works analogously.

First, suppose that $T_M<T_h$.
Since $\uu$ and $\uu_h$ are continuous in time, we know that 
$
\|\uu(T_M)-\uu_h(T_M)\|_{\HH^\sigma_\dive}\leq 1$.
Moreover, there exists an $\epsilon>0$ such that $T_M+\epsilon\in [0,T_h)$ and
\begin{equation*}
\|\uu(T_M)-\uu_h(T_M)-(\uu(T_M+t)-\uu_h(T_M+t))\|_{\HH^\sigma_\dive}\leq 1
\end{equation*}
for all $t\in[0,\epsilon)$. But then we have $
\|\uu(t)-\uu_h(t)\|_{\HH^\sigma_\dive}\leq 2$ for all $t\in[0,T_M+\veps)$, which is a contradiction to the definition of $T_M$. 
Hence, $T_M\geq T_h$.

Moreover, $\uu_h$ is bounded on $ \SSS^2\times[0,T_h)$ since we have
\begin{equation*}
\vert u_h(\xx,t)\vert\leq
\|\uu_h(t)\|_{\HH^\sigma_\dive}
\!\leq
\|\uu(t)\|_{\HH^\sigma_\dive}
\!+
\|\uu(t)-\uu_h(t)\|_{\HH^\sigma_\dive}
\!\leq
\|\uu(t)\|_{\HH^\sigma_\dive}+M
\end{equation*}
for all $\xx\in\SSS^2$ and $t\in[0,T_h)$. But according to the continuity criterion for ODEs, this would mean that our numerical solution would exists globally in time. Hence, we have $T_M\geq T_h=\infty$.
\end{proof}

\subsection{Calculation of the pressure}\label{4::pressure}
To obtain an approximation for the pressure $p$ at least up to a constant, we have to solve the curl-free projection of the Navier-Stokes equations \eqref{equation::NS_curlfree}. The approximated version of this equation is given by 
\begin{equation}\label{equation::NS_curlfree::projected}
\nabla_* p_h=(\operatorname{id}-\cP)\cI_{\Phi} \left(\ff-\cB(\uu_h,\uu_h)-\cC(\uu_h)\right),\quad\text{in }\SSS^{2}\times(0,T).
\end{equation} 
Fortunately, we can obtain this approximation without much effort.
Since we already need to calculate the right hand side of \eqref{equation::NS_2::1}, we already have time-dependent coefficients $\bbeta_j(t)=\bbeta_j\in T_{\xx_j}(\SSS^2)$, $1\leq j\leq N$, such that
\begin{equation*}
\cP\cI_{\Phi} \left(\ff-\cB(\uu_h,\uu_h)-\cC(\uu_h)\right)
=
\sum_{j=1}^N\Phi_\dive(\cdot,\xx_j)\bbeta_j.
\end{equation*}
Just using these coefficients, we can give an approximation to the pressure by simply exchanging the kernel function by its curl-free part and using \eqref{2::eq::Phi_curl_long} to arrive at
\begin{equation*}
\begin{split}
(\operatorname{id}-\cP)\cI_{\Phi} \left(\ff-\cB(\uu_h,\uu_h)-\cC(\uu_h)\right)
&=
\sum_{j=1}^N\Phi_\curl(\cdot,\xx_j)\bbeta_j\\
&=
\nabla^*\sum_{j=1}^N(\nabla_{}^{*})^T\phi(\cdot,\xx_j)\bbeta_j.
\end{split}
\end{equation*}
Hence an approximation of the pressure $p$, up to a constant, is simply given by
\begin{equation*}
p_h=\sum_{j=1}^N(\nabla^*)^T\phi(\cdot,\xx_j)\bbeta_j.
\end{equation*}
Since the integral over the surface-gradient of $\phi(\cdot,\xx_j)\in H^\sigma$ vanishes for every $1\leq j\leq N$, see \cite{Freeden1998}, the mean integral of our approximation $p_h$ vanishes, too. Therefore, we assume that the integral over the true pressure p also vanishes.
The following error estimate for the approximation of the pressure can be established.

\begin{theorem}\label{4::theroem::main_theorem::pressure}
Under the assumptions of Theorem \ref{4::theroem::main_theorem} and that the meant integral of $p$ vanishes, there exists a constant $C>0$ such that
\begin{equation*}
\|p-p_h\|_{L^2(0,T;\!H^{\sigma+1})}\leq Ch_{\cX}^{\sigma-1}
\end{equation*}
holds for all data sites $\cX\in\SSS^2$ with fill distance $h_\cX\leq h_0$ and all $t\in[0,T)$.
\end{theorem}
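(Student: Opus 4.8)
The plan starts from the explicit form of both gradients. By \eqref{equation::NS_curlfree}, $\nabla^* p=(\id-\cP)\gggg$ with $\gggg:=\ff-\cB(\uu,\uu)-\cC(\uu)$, and by \eqref{equation::NS_curlfree::projected} together with the computation in Section \ref{4::pressure}, $\nabla^* p_h=(\id-\cP)\cI_\Phi\gggg_h$ with $\gggg_h:=\ff-\cB(\uu_h,\uu_h)-\cC(\uu_h)$. First I would reduce the $H^{\sigma+1}$-estimate for $p-p_h$ to an estimate for $\nabla^*(p-p_h)$: since $p$ and $p_h$ both have vanishing mean, the $\ell=0$ Fourier coefficient of $q:=p-p_h$ vanishes, and using $\nabla^* Y_{\ell,k}=\sqrt{\lambda_\ell}\,\zz_{\ell,k}$ together with $\lambda_\ell\geq\tfrac12(1+\lambda_\ell)$ for $\ell\geq1$ one obtains
\[
\|q\|_{H^{\sigma+1}}^2=\sum_{\ell\geq1}(1+\lambda_\ell)^{\sigma+1}\!\sum_{k=1}^{2\ell+1}\widehat q_{\ell,k}^{\,2}\le 2\sum_{\ell\geq1}(1+\lambda_\ell)^{\sigma}\lambda_\ell\!\sum_{k=1}^{2\ell+1}\widehat q_{\ell,k}^{\,2}=2\,\|\nabla^* q\|_{\HH^\sigma_\curl}^2,
\]
so it suffices to bound $\|\nabla^*(p-p_h)\|_{L^2(0,T;\HH^\sigma_\curl)}$; in particular $p-p_h\in H^{\sigma+1}$ follows once $\nabla^*(p-p_h)\in\HH^\sigma_\curl$ is known.

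Next I would split
\[
\nabla^*(p-p_h)=\underbrace{(\id-\cP)\gggg-(\id-\cP)\cI_\Phi\gggg}_{=:E_1}\;+\;\underbrace{(\id-\cP)\cI_\Phi(\gggg-\gggg_h)}_{=:E_2}.
\]
The term $E_1$ is the curl-free interpolation error of the exact data $\gggg$, so Theorem \ref{2::Theorem::1}, applied with smoothness index $2\sigma$ and measured in $\HH^\sigma_\curl$, gives $\|E_1\|_{\HH^\sigma_\curl}\leq Ch_\cX^{\sigma}\|\gggg\|_{\HH^{2\sigma}}$. Because $\uu(t)\in\HH^{2\sigma+1}_\dive$ with a bound uniform on $[0,T]$ (Theorem \ref{4::theroem::main_theorem}, resp. Lemma \ref{3::Lemma::existence}) and $\ff\in C([0,T];\HH^{2\sigma})$, Lemmas \ref{auxiliary_results::lemma::1} and \ref{auxiliary_results::lemma::2} bound $\|\gggg(t)\|_{\HH^{2\sigma}}$ uniformly on $[0,T]$, hence $\|E_1\|_{L^2(0,T;\HH^\sigma_\curl)}\leq Ch_\cX^{\sigma}$.

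For $E_2$ I would use that $(\id-\cP)\cI_\Phi$ is bounded on $\HH^\sigma$ (the interpolation operator is an orthogonal projection in the native-space norm, which is equivalent to $\|\cdot\|_{\HH^\sigma}$), so $\|E_2\|_{\HH^\sigma_\curl}\leq C\|\gggg-\gggg_h\|_{\HH^\sigma}$. The $\ff$-terms cancel, leaving $\gggg-\gggg_h=(\cB(\uu_h,\uu_h)-\cB(\uu,\uu))+(\cC(\uu_h)-\cC(\uu))$. By Lemma \ref{auxiliary_results::lemma::2}, $\|\cC(\uu_h)-\cC(\uu)\|_{\HH^\sigma}=\|\cC(\uu-\uu_h)\|_{\HH^\sigma}\leq C\Omega\|\uu-\uu_h\|_{\HH^\sigma_\dive}=O(h_\cX^\sigma)$ by Theorem \ref{4::theroem::main_theorem}. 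For the convective part, write $\cB(\uu,\uu)-\cB(\uu_h,\uu_h)=\cB(\uu-\uu_h,\uu)+\cB(\uu_h,\uu-\uu_h)$; Lemma \ref{auxiliary_results::lemma::1} bounds the first summand by $C\|\uu-\uu_h\|_{\HH^\sigma_\dive}\|\uu\|_{\HH^{\sigma+1}_\dive}=O(h_\cX^\sigma)$ and the second by $C\|\uu_h\|_{\HH^\sigma_\dive}\|\uu-\uu_h\|_{\HH^{\sigma+1}_\dive}$.

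This last term is the crux, and it is the reason the final rate is $h_\cX^{\sigma-1}$ rather than $h_\cX^{\sigma}$: $\|\uu_h(t)\|_{\HH^\sigma_\dive}$ is uniformly bounded on $[0,T]$ (as in the proof of Theorem \ref{4::theroem::main_theorem}), but the only available control on $\|\uu-\uu_h\|_{\HH^{\sigma+1}_\dive}$ is $C\nu^{-1}e^{C_2t}h_\cX^{\sigma-1}$, again from Theorem \ref{4::theroem::main_theorem}. Hence $\|\nabla^*(p-p_h)(t)\|_{\HH^\sigma_\curl}\leq Ce^{C_2t}h_\cX^{\sigma-1}$; squaring, integrating over the finite interval $[0,T]$, taking square roots, combining with the reduction of the first paragraph and using $h_\cX\leq h_0$ yields $\|p-p_h\|_{L^2(0,T;H^{\sigma+1})}\leq Ch_\cX^{\sigma-1}$. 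The genuinely delicate points are the regularity bookkeeping — ensuring $\gggg\in\HH^{2\sigma}$ pointwise in time with a uniform bound, and that the norms of $\uu_h$ invoked are finite, both of which are part of the framework of Theorem \ref{4::theroem::main_theorem} — and the observation that $\cB(\uu_h,\uu-\uu_h)$ cannot be improved, so that $\sigma-1$ is the natural exponent for this argument.
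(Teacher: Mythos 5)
Your proposal is correct and follows essentially the same route as the paper: reduce $\|p-p_h\|_{H^{\sigma+1}}$ to $\|\nabla^*(p-p_h)\|_{\HH^\sigma_\curl}$ via the zero-mean assumption, split into the interpolation error of the exact data plus the interpolated difference, control the latter with the boundedness of $\cI_\Phi$ together with Lemmas \ref{auxiliary_results::lemma::1} and \ref{auxiliary_results::lemma::2}, and insert the velocity bounds from Theorem \ref{4::theroem::main_theorem}, with the $h_\cX^{\sigma-1}$ rate coming from the $\HH^{\sigma+1}_\dive$ velocity error. The only difference is organizational (you split the right-hand side once into $E_1+E_2$, while the paper treats the $\ff$, $\cB$, $\cC$ terms separately), and your explicit tracking of where the loss of one power of $h_\cX$ occurs is a useful clarification of what the paper leaves implicit.
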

\begin{proof}
Since both $p$ and $p_h$ have zero mean integral, we have
\begin{equation*}
\|p-p_h\|_{H^{\sigma+1}}\leq \sqrt{2}\|\nabla^*p-\nabla^*p_h\|_{\HH^\sigma_\curl}.
\end{equation*}
Inserting equation \eqref{equation::NS_curlfree} and \eqref{equation::NS_curlfree::projected} for $\nabla^*p$ and $\nabla^*p_h$, respectively, and splitting up the singe terms yields
\begin{equation*}
\begin{split}
\|p-p_h\|_{H^{\sigma+1}}
\leq&
\|\ff-\cI_{\Phi} \ff\|_{\HH^{\sigma}_\curl}
+\|\cB(\uu,\uu)-\cI_{\Phi} \cB(\uu_h,\uu_h)\|_{\HH^{\sigma}_\curl}\\
&+\|\cC(\uu)-\cI_{\Phi} \cC(\uu_h)\|_{\HH^{\sigma}_\curl}.
\end{split}
\end{equation*}
With Theorem \ref{2::Theorem::1}, the first term can be estimated by
\begin{equation*}
\|\ff-\cI_{\Phi} \ff\|_{\HH^{\sigma}_\curl}\leq ch^\sigma_{\cX}\|\ff\|_{\HH^{\sigma}_\curl}.
\end{equation*}
The second term can be splitted up to
\begin{equation*}
\begin{split}
\|\cB(\uu,\uu)-\cI_{\Phi} \cB(\uu_h,\uu_h)\|_{\HH^{\sigma}_\curl}
\leq&
\|\cB(\uu,\uu)-\cI_{\Phi} \cB(\uu,\uu)\|_{\HH^{\sigma}_\curl}\\
&+
\|\cI_{\Phi} \cB(\uu,\uu)-\cI_{\Phi} \cB(\uu_h,\uu_h)\|_{\HH^{\sigma}_\curl},
\end{split}
\end{equation*}
where the first part can simply be bounded by
\begin{equation*}
\|\cB(\uu,\uu)-\cI_{\Phi} \cB(\uu,\uu)\|_{\HH^{\sigma}_\curl}
\leq
ch_{\cX}^\sigma\|\cB(\uu,\uu)\|_{\HH^{2\sigma}_\curl}
\leq
ch_{\cX}^\sigma\|\uu\|_{\HH^{2\sigma}_\curl}\|\uu\|_{H^{2\sigma+1}_\curl}.
\end{equation*}
For the second part, we use that the operator $\cI_{\Phi}$ is bounded in $\HH^\sigma_\curl$. Hence, we have
\begin{equation*}
\begin{split}
\|\cI_{\Phi}(\cB(\uu,\uu)-\cB(\uu_h,\uu_h))\|_{\HH^{\sigma}_\curl}
\leq &
\|\cB(\uu,\uu)-\cB(\uu_h,\uu_h)\|_{\HH^{\sigma}_\curl} \\
\leq &
\|\cB(\uu-\uu_h,\uu)\|_{\HH^{\sigma}_\curl}\\
&+
\|\cB(\uu_h,\uu-\uu_h)\|_{\HH^{\sigma}_\curl}\\
\leq &
C\|\uu-\uu_h\|_{\HH^{\sigma+1}_\curl}.
\end{split}
\end{equation*}
The last term can be split and estimated like the previous one. We obtain
\begin{equation*}
\begin{split}
\|\cC(\uu)\!-\cI_{\Phi} \cC(\uu_h)\|_{\HH^{\sigma}_\curl}
\leq&
\|\cC(\uu)\!-\cI_{\Phi} \cC(\uu)\|_{\HH^{\sigma}_\curl}
\!\!+\!
\|\cI_{\Phi}\left(\cC(\uu-\uu_h)\right)\|_{\HH^{\sigma}_\curl}\\
\leq&
 C\left(h_{\cX}^\sigma\|\uu\|_{\HH^{2\sigma}_\curl}+\|\uu-\uu_h\|_{\HH^{\sigma}_\curl}\right).
\end{split}
\end{equation*}
Inserting the results from Theorem \ref{4::theroem::main_theorem}, squaring and integrating over time finishes the proof.
\end{proof}

\section{Numerical considerations}\label{5}
In this section we will rewrite the spatial-discretized Navier-Stokes equations \eqref{equation::NS_2::1} - \eqref{equation::NS_2::2} into a time-depended linear system, which can then be solved using standard ODE solvers. 
Moreover, we take a look at the numerical effort to show that the given method can be evaluated efficiently. 
In the end, we will evaluate the new method on a standard benchmark problem, see \cite{Debussche1993,fengler2004,Ganesh2010}, to show the stability of the new method.

\subsection{Algorithm and Cost}
Using the representation of the approximated solution \eqref{equation::approx::1}, we can write the discretised Navier-Stokes equations \eqref{equation::NS_2::1} - \eqref{equation::NS_2::2} in matrix-vector form given by
\begin{align}\label{Algo::eq::disc_NS}
A_{\Phi_\dive}\dot{\aalpha}
&=
\nu A_{\Delta^*\Phi_\dive}\aalpha
+
\gggg(\aalpha)
\quad \text{on }(0,T),\\\label{Algo::eq::disc_NS::2}
A_{\Phi_\dive}\aalpha(0)
&=(\uu_0(\xx_k))_{1\leq k\leq N},
\end{align}
where the kernel-based $3N\times 3N$-matrices are given by 
\begin{equation*}
A_{\Phi_\dive}=(\Phi_\dive(\xx_k,\xx_j))_{1\leq j,k\leq N},\quad A_{\Delta^*\Phi_\dive}=(\Delta^*\Phi_\dive(\xx_k,\xx_j))_{1\leq j,k\leq N}.
\end{equation*}
Note that both matrices $A_{\Phi_\dive}$ and $-A_{\Delta^*\Phi_\dive}$ are positive definite on a $2N$-dimensional subspace of $\R^{3N}$ since the kernel function $\Phi_\dive$ as well as $-\Delta^*\Phi_\dive$ are positive definite on the tangent space of the sphere.
To solve the $3N\times 3N$ linear systems \eqref{Algo::eq::disc_NS} and \eqref{Algo::eq::disc_NS::2} in the implementation, we can reduce it to a $2N \times 2N$ system by introducing coordinates and bases for the various tangent planes of the sphere. The corresponding matrices of the reduced system are classically positive definite. For details and the exact implementation, see \cite{Fuselier2009_2}.

The right hand side term $\gggg:\R^{3N}\to\R^{3N}$ is given by
\begin{equation}\label{Algo::eq::first::system}
\gggg(\aalpha):=\left(\cP_{\Phi}(\ff -\cB(\uu_h,\uu_h)-\cC(\uu_h))(\xx_k)\right)_{1\leq k\leq N},
\end{equation}
where $\uu_h$ is given by \eqref{equation::approx::1}. 
To calculate the value of $\gggg$ for a given $\aalpha$, we have to solve the interpolation problem
\begin{equation*}
\sum_{j=1}^N \Phi(\xx_k,\xx_j)\bbeta_j=(\ff -\cB(\uu_h,\uu_h)-\cC(\uu_h))(\xx_k),\quad 1\leq k\leq N.
\end{equation*}  
The function $\gggg$ is then given by $\gggg(\aalpha)=\sum_{j=1}^N \Phi_\dive(\cdot,\xx_j)\bbeta_j$.


To solve the received ODE, standard explicit or semi-implicit methods can be used. 
However, using explicit solvers, one has to be aware that the time step $\tau$ is probably coupled to the filling distance $h_\cX$ and has to satisfy some kind of CFL condition for the system to remain stable. however, for a investigation of the numerical cost of the new method, we will neglect the fact and restrict ourselves to two simple examples for the time discretization, the explicit and the semi-implicit Euler algorithm. Let $\tau>0$ be the time discretization parameter. Then, a time step for an explicit Euler scheme is given by 
\begin{equation}\label{Algo::eq::Euler::expl}
\aalpha_{n+1}=\aalpha_{n}+
\tau A_{\Phi_\dive}^{-1}
\left(
\nu A_{\Delta^*\Phi_\dive}\aalpha_n+\gggg(\aalpha_n)
\right)
\end{equation}
and a time step of the semi-implicit Euler scheme is given by
\begin{equation}\label{Algo::eq::Euler::impl}
\begin{split}
\aalpha_{n+1}&=
\tau\left(A_{\Phi_\dive}-\nu \tau A_{\Delta^*\Phi_\dive}\right)^{-1}
\left(
A_{\Phi_\dive}\aalpha_{n}+\gggg(\aalpha_n)
\right)\\
&=\aalpha_n+\tau\left(A_{\Phi_\dive}-\nu \tau A_{\Delta^*\Phi_\dive}\right)^{-1}
\left(
\nu A_{\Delta^*\Phi_\dive}\aalpha_n+\gggg(\aalpha_n).
\right)
\end{split}
\end{equation}
The costs of the given schemes depend highly on the ability to solve the two linear systems given in \eqref{Algo::eq::first::system} and in \eqref{Algo::eq::Euler::expl} or \eqref{Algo::eq::Euler::impl}, respectively. This would have a cost of $\mathcal{O}(N^3)$ for each linear system using standard solvers in each time-step. 
However these, costs can be reduced by various strategies.
Since the matrices are constant in time, one can take a Cholesky decomposition on the start of the cost of $\mathcal{O}(N^3)$. Then, the cost of the solution of the linear system in each time-step reduces to $\mathcal{O}(N^2)$.
Instead of a Cholesky decomposition, it is also possible to use Lagrange basis functions, with which it is theoretically possible to reduce the effort of one timestep to $\mathcal{O}(N^2)$. However, as in the case of Cholesky decomposition, it we need to calculate the Lagrange basis function at the beginning of the cost of $\mathcal{O}(N^3)$. 
We will give a little preview about the Lagrange basis in the conclusion. 
Note that both, the Cholesky decomposition as well as the Lagrange functions have to be calculated once for a given point set, which means that we can use the same decomposition for different initial conditions or external forces $\ff$.

Another possibility is to use the fact that the kernel functions have compact support. By using scaled kernel functions of the form $\phi_\epsilon=\phi(\cdot/\epsilon)$ for an $\epsilon>0$, the matrices appearing in the schemes above can be made sparse for small $\epsilon$. In this case, the effort can be reduced to $\mathcal{O}(N^2\log(N))$ by using special sparse solvers. However, it is to be expected that the theoretical convergence rate will depend on the scaling parameter $\epsilon$, which means that $\epsilon$ has to be coupled to the fill distance $h_\cX$ in a certain way in order to still have convergence for $h_\cX\to 0$ and $\epsilon\to 0$.

Of course, any other time discretization scheme would be possible. We refer to general IMEX (Implicit-Explicit) schemes, see \cite{Ascher1997}, which have better stability properties than explicit methods.
Furthermore, it is also possible to give fully implicit schemes, see \cite{Keim2016}. Unfortunately, those schemes are highly inefficient since in every time-step a non-linear equation system has to be solved. 

\subsection{Numerical Validation}

\begin{figure}
 \centering
 
\begin{subfigure}{.5\textwidth}
  \centering
  \includegraphics[width=\linewidth]{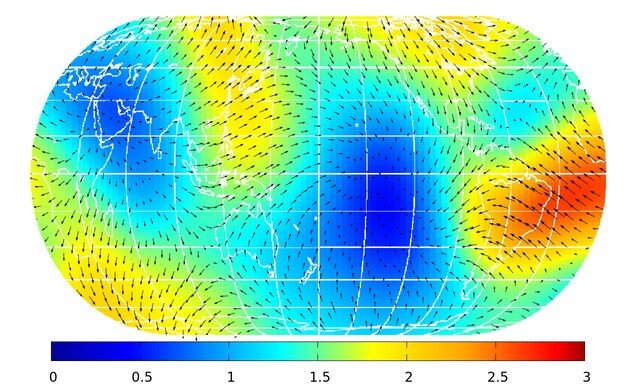}
  \caption*{Velocity field $\uu_h$ at time t=10}
\end{subfigure}%
\begin{subfigure}{.5\textwidth}
  \centering
  \includegraphics[width=\linewidth]{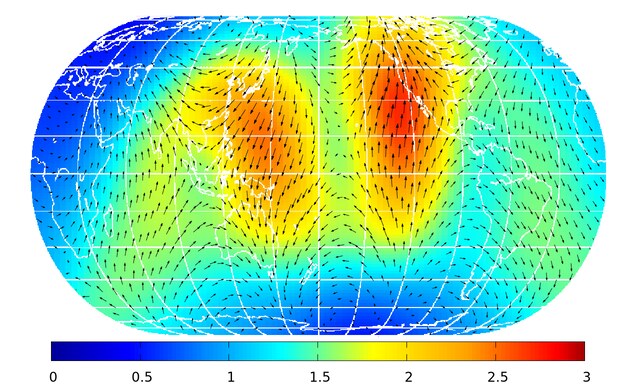}
  \caption*{Velocity field $\uu_h$ at time t=20}
\end{subfigure}

\begin{subfigure}{.5\textwidth}
  \centering
  \includegraphics[width=\linewidth]{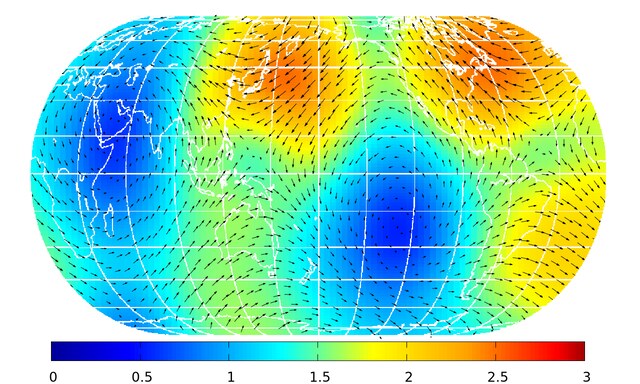}
  \caption*{Velocity field $\uu_h$ at time t=30}
\end{subfigure}%
\begin{subfigure}{.5\textwidth}
  \centering
  \includegraphics[width=\linewidth]{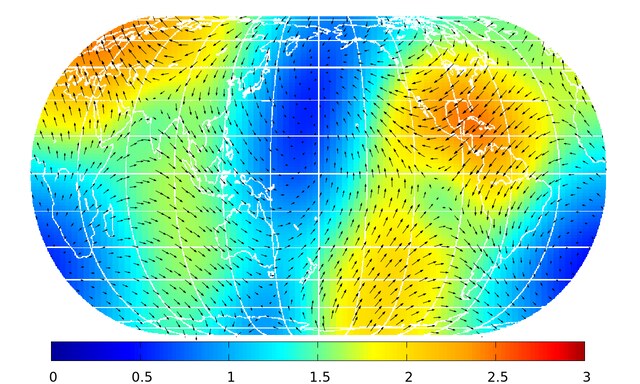}
  \caption*{Velocity field $\uu_h$ at time t=40}
\end{subfigure}

\begin{subfigure}{.5\textwidth}
  \centering
  \includegraphics[width=\linewidth]{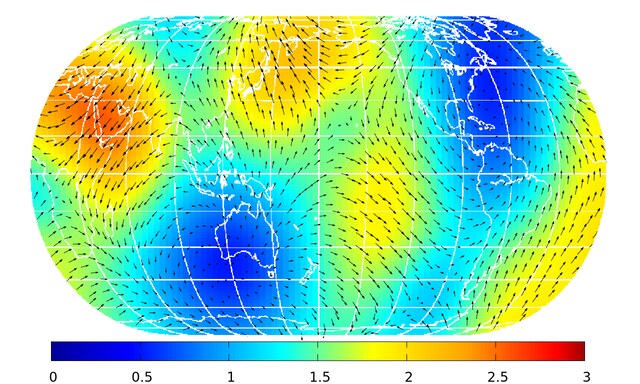}
  \caption*{Velocity field $\uu_h$ at time t=50}
\end{subfigure}%
\begin{subfigure}{.5\textwidth}
  \centering
  \includegraphics[width=\linewidth]{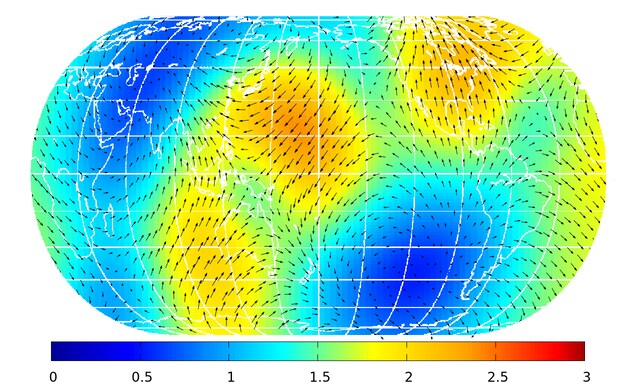}
  \caption*{Velocity field $\uu_h$ at time t=60}
\end{subfigure}
\caption{Velocity field $\uu_h$ at various times.}
\label{fig:velocity}
\end{figure}

We will now demonstrate the stability of the new method on a test case which is motivated by the numerical demonstration in \cite{fengler2004}, see also \cite{Debussche1993,Ganesh2010}. 
This simulation should only be seen as a numerical test to validate that the new method is stable. 
A proper numerical validation of the results in chapter 3 is unfortunately not possible, since the author is not aware of any analytical solution of the Navier-Stokes equations with which a numerical solution of the method could be compared.

\begin{figure}[htb]
 \centering
 
\begin{subfigure}{.5\textwidth}
  \centering
  \includegraphics[width=\linewidth]{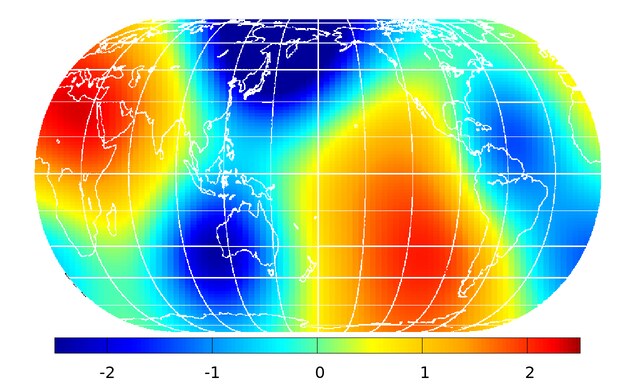}
  \caption*{Velocity field $\uu_h$ at time t=10}
\end{subfigure}%
\begin{subfigure}{.5\textwidth}
  \centering
  \includegraphics[width=\linewidth]{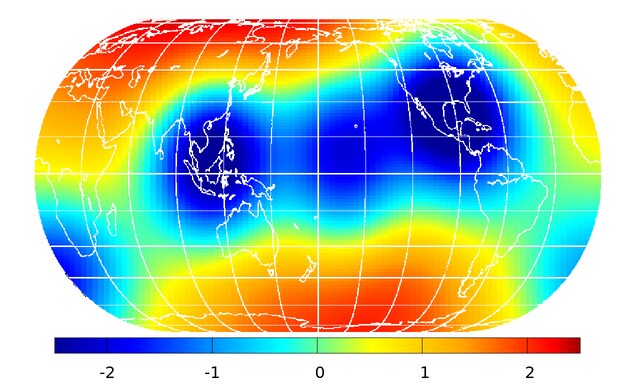}
  \caption*{Velocity field $\uu_h$ at time t=20}
\end{subfigure}

\begin{subfigure}{.5\textwidth}
  \centering
  \includegraphics[width=\linewidth]{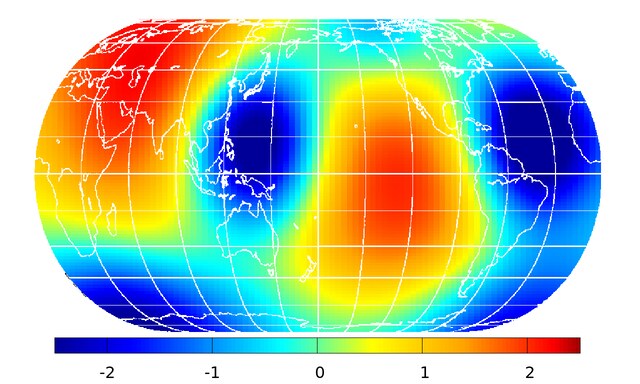}
  \caption*{Velocity field $\uu_h$ at time t=30}
\end{subfigure}%
\begin{subfigure}{.5\textwidth}
  \centering
  \includegraphics[width=\linewidth]{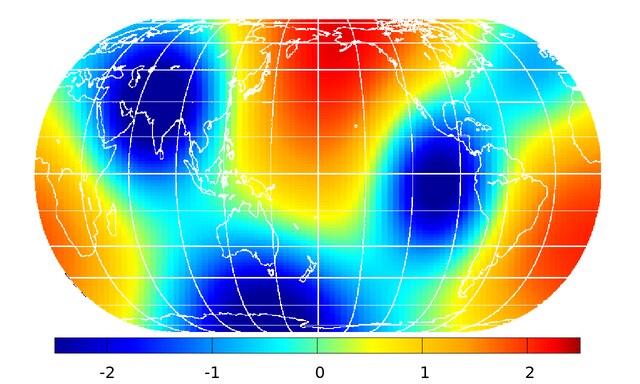}
  \caption*{Velocity field $\uu_h$ at time t=40}
\end{subfigure}

\begin{subfigure}{.5\textwidth}
  \centering
  \includegraphics[width=\linewidth]{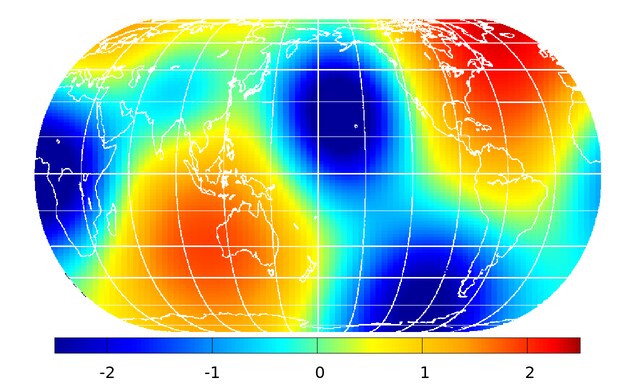}
  \caption*{Velocity field $\uu_h$ at time t=50}
\end{subfigure}%
\begin{subfigure}{.5\textwidth}
  \centering
  \includegraphics[width=\linewidth]{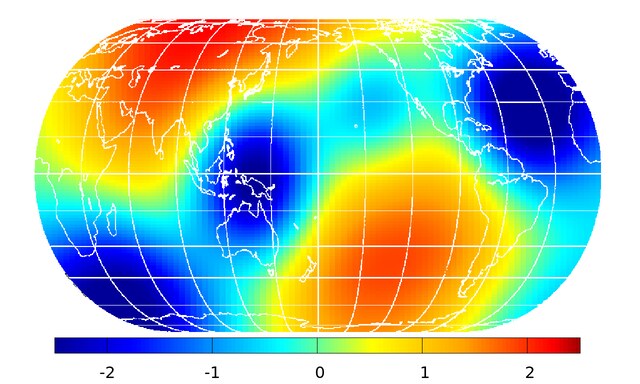}
  \caption*{Velocity field $\uu_h$ at time t=60}
\end{subfigure}
\caption{Pressure $p_h$ at various times.}
\label{fig:pressure}
\end{figure}

The initial velocity $\uu_0$, which is chosen in analogy to \cite{fengler2004}, is given by $\uu_0=\LL^* p$, where 
$p(\xx)=\sum_{\vert\nn\vert<20}a_{\nn}\xx^\nn$
is a polynomial of degree $19$ in $\R^3$ such that  $\|\uu_0\|_{\LL^2}=1$.
The coefficients $a_{\nn}$ were generated by a random function such that $a_{\nn}\sim\vert\nn\vert^{-1}$. This construction ensures that the initial velocity is a tangential and divergence-free velocity field. 
The time-depended external force $f$ is given by
\begin{equation*}
f(t,\xx)=\gamma(t)\yy_{3,0}(\xx)
\end{equation*}
with
\begin{equation*}
\gamma(t)=
\begin{cases}
1& \text{for } t\in[0,10],\\
\cos\left(\frac{\pi}{5}t\right)e^{(10-t)/5}& \text{for } t\in]10,60].
\end{cases}
\end{equation*}
Note that the initial condition as well as the forcing term both satisfy the conditions of Lemma \ref{3::Lemma::existence}, such that an analytical solution of the problem exists global in time.
The viscosity coefficient and the angular velocity are set to $\nu=10^{-4}$ and $\Omega=1$. 
The set of data points $\cX$ is given by $N=2500$ points which are distributed by minimizing the potential energy of the points, see \cite{Sloan2004}.
For the basis kernel function, we used the unscaled Wendland function $\phi_4$ from Table \ref{2::table::1}. 

As the time integration scheme, we used a third order implicit-explicit Runge-Kutta method which can be found in \cite[Section 2.4]{Ascher1997} with a timestep $\tau=10^{-2}$. We used the explicit solver in the IMEX scheme to handle the term $\gggg$ in \eqref{Algo::eq::disc_NS} and the implicit solver for the Laplace term.

The approximated velocity field $\uu_h$ as well as the approximated pressure $p_h$ can be seen in Figure \ref{fig:velocity} and Figure \ref{fig:pressure}, respectively, for $t = 10, 20, 30, 40, 50, 60$. 
The approximated $\LL^2$-norms of the velocity and the pressure are given by
\begin{equation*}
e_\uu(t)=\sqrt{\frac{4\pi}{N}\sum_{j=1}^N\|\uu_h(\xx_j,t)\|_2^2},\quad e_p(t)=\sqrt{\frac{4\pi}{N}\sum_{j=1}^N\vert p_h(\xx_j,t)\vert^2},\quad t\in[0,60].
\end{equation*}
The time evolution of both norms can be seen in Figure \ref{fig::time_evolution::energy}.
\begin{figure}[h]
\centering
\begin{subfigure}{.5\textwidth}
  \centering
  \includegraphics[width=\linewidth]{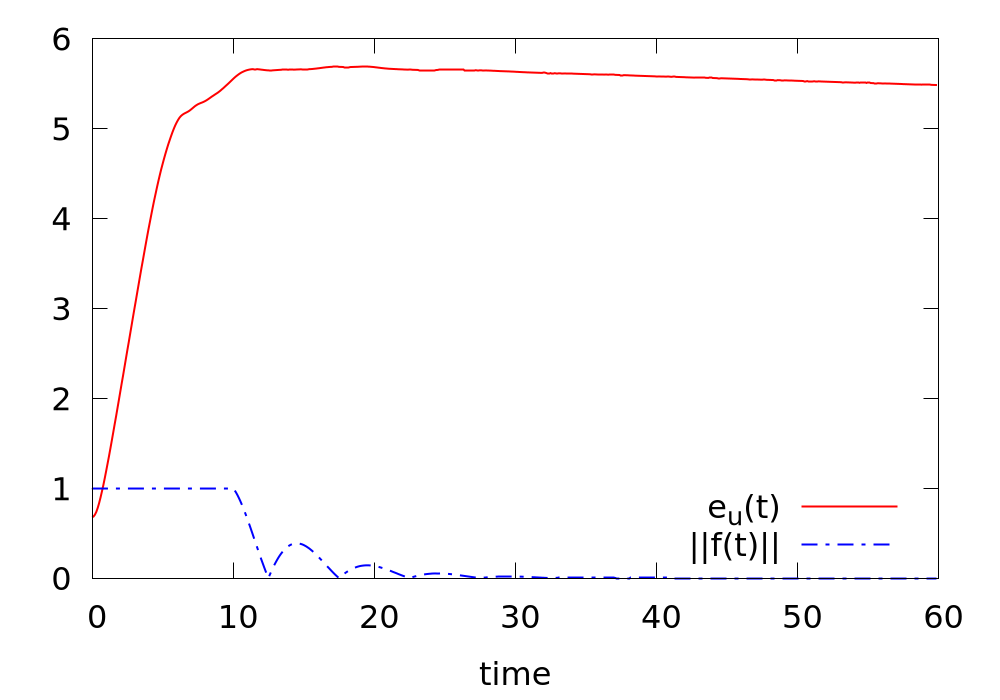}
\end{subfigure}%
\begin{subfigure}{.5\textwidth}
  \centering
  \includegraphics[width=\linewidth]{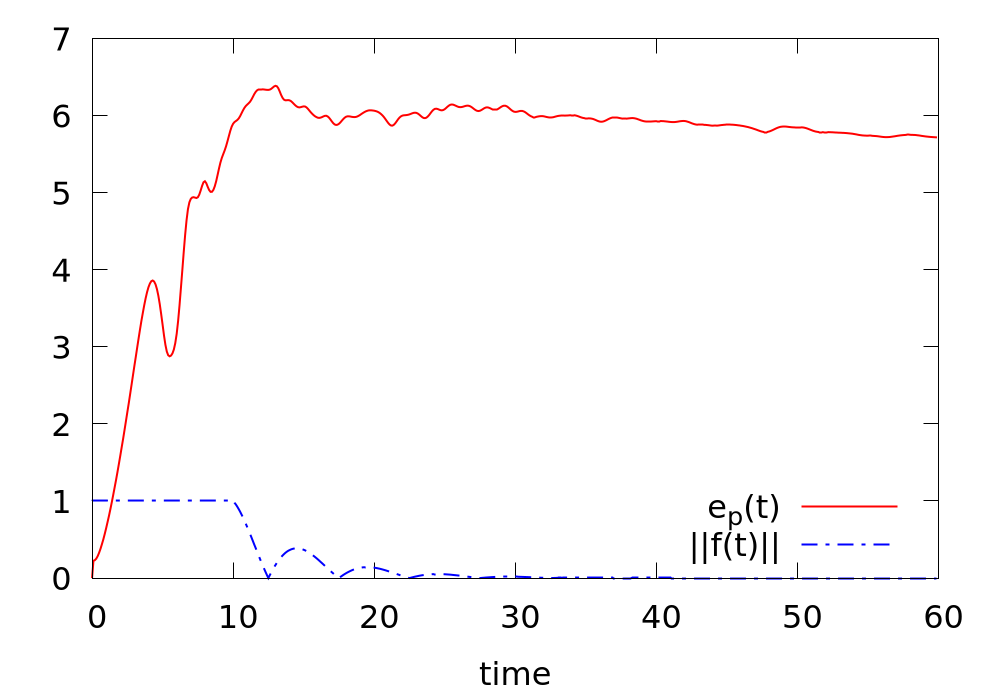}
\end{subfigure}
\caption{Time evolution of $e_\uu$ (left, red) and $e_p$ (right, red) in comparison with the norm of the external force $\ff$ (blue).}
\label{fig::time_evolution::energy}
\end{figure}
As one can see, the solution remains stable in the whole time interval $[0,60]$.
As in \cite{fengler2004} and \cite{Ganesh2010}, the initial random velocity evolves into a flow with large structures. Also the pressure develops large, smooth structures over the sphere .
In Figure \ref{fig::time_evolution::energy}, one can see that $e_\uu$ as well as $e_p$ are increasing in the time interval $[0,10]$, where the norm of the forcing term is constant at its maximum. 
As the external force decreases for $t>10$, the damping effect of the diffusion term becomes dominant and $e_\uu$ as well as $e_p$ decay slowly in time, as it can also be seen in \cite{Ganesh2010}. The results shown are thus in qualitative agreement with those of \cite{fengler2004} and \cite{Ganesh2010}.

\section{Conclusion and Remarks}
In this paper, we developed a new kernel-based collocation method for the incompressible Navier-Stokes equations on the two dimensional sphere.
We proved convergence of the method based on the fill distance of the collocation points and the smoothness of the initial data for both the velocity and the pressure of the system. 

The advantages of this method are various. 
The new method is a meshfree method, which means that the positions of the data points $\cX$ are independent of any underlying grid on the sphere. 
Moreover, it needs no quadrature formula for the computation, which makes it less vulnerable to additional errors.
The method leads to high-order analytically divergence-free approximations of the velocity field, which can be computed by any smoothness depending on the choice of the kernel function. 
By its kernel-based character, it is simple to give high-order approximations of the pressure by simply exchanging the kernel function. 
Finally, it is at least as efficient as comparable methods like the one of Fengler and Freeden \cite{fengler2004} or Ganesh, Le Gia and Sloan \cite{Ganesh2010}, as the complexity can be reduced to $\sim N^2$ calculations per time step, where $N$ is is number of data points. 

This can be achieved by using the Lagrangian basis. The Lagrange functions of the basis $\Phi(\cdot,\xx_j)\in\R^{3\times 3}$ is given by the functions
\begin{equation*}
\Psi_k=\sum_{j=1}^N\Phi(\cdot,\xx_j)\ggamma_j^k,\quad 1\leq k\leq N,
\end{equation*}
such that we have the interpolation condition $\Psi_k(\xx_j)=\delta_{k,j}I_{3\times 3}$ for every $1\leq k,j\leq N$. 
Given a Lagrange basis, the interpolation of a tangential vector-field $\uu$ is just given by
\begin{equation*}
\uu_h(\xx)=\sum_{j=1}^N\Psi(\cdot,\xx_j)\uu(\xx_j),
\end{equation*}
which means among other things that the linear systems in \eqref{Algo::eq::Euler::expl} and \eqref{Algo::eq::Euler::impl} can be given without any further calculation. However, the evaluation of the approximation $\uu_h$ becomes more expensive, since the cost of an evaluation of the function $\Psi_k$ is linear in $N$. Hence, the cost of the whole method will be $\sim N^2$. 
The use of local Lagrange functions, see for example \cite{Buhmann1990}, which reduces the cost of an evaluation of $\Psi$ at the cost of an additional error, is part of current research.

However, there remains much space to improve and investigate the current method. 
The work mainly involves the analysis of discretisation in space. An exact analysis of the time discretisation, the choice of the time discritisation scheme as well as the the time step $\tau$ and the associated stability of the method need to be researched further.

Moreover, the numerical behaviour of the method has to be further investigated.
We only validated the method on a benchmark test case with unknown analytical solution and on a nice node set.
The method has to be tested on some examples with known analytical solution to determine the numerical order of convergence and to compare them with the results from Theorem \ref{4::theroem::main_theorem} and Theorem \ref{4::theroem::main_theorem::pressure}.

Moreover, in the presented test case, only unscaled kernels where used. However, kernel functions of the form $\phi_\epsilon(x)=\phi(x/\epsilon)$ should improve the efficiency of the method since the occurring matrices will be sparser. 
However, this will be achieved at the cost of accuracy, since it is expected that the interploation error behaves at least like $\sim \epsilon^{-\sigma}$.
An investigation of the numerical error depending on $h_\cX$ and $\epsilon$ has still to be done for the presented method.

\bibliographystyle{amsplain}
\bibliography{ref.bib}

\end{document}